\newtheorem{theorem}{Theorem}
\newtheorem{lemma}[theorem]{Lemma}
\newtheorem{definition}[theorem]{Definition}
\newtheorem{proposition}[theorem]{Proposition}
\newtheorem{conjecture}{Conjecture}
\numberwithin{equation}{section}
\newenvironment{proof}[1][\relax]%
 {\paragraph{Proof\ifx#1\relax:\else~of #1:\fi}}%
 {~\hfill$\square$\par\bigskip}
\newcommand{\Z}{\mathbb Z}
\title{Divergence of the correlation length for critical planar FK percolation with $1\le q\le4$ via parafermionic observables}
\author{H. Duminil-Copin}
\begin{document}

\maketitle

\begin{abstract}
Parafermionic observables were introduced by Smirnov for planar FK percolation in order to study the critical phase $(p,q)=(p_c(q),q)$. This article gathers several known properties of these observables.  Some of these properties are used to prove the divergence of the correlation length when approaching the critical point for FK percolation when $1\le q\le 4$. A crucial step is to consider FK percolation on the universal cover of the punctured plane. We also mention several conjectures on FK percolation with arbitrary cluster-weight $q>0$.
\end{abstract}

\section{Introduction}

\paragraph{Definition of the model} Since its introduction by Fortuin and Kasteleyn \cite{FK72}, the Fortuin-Kasteleyn (FK) percolation has become an important tool in the 
study of phase transitions. The spin correlations of Potts models are 
rephrased as cluster connectivity properties of their FK 
representations via the Edwards and Sokal coupling. This allows for the use of geometric techniques, thus 
leading to several important applications. For example, Swendsen and Wang utilized the model in proposing an
algorithm for the time-evolution of Potts models \cite{SW87}. Another example is provided by the recent classification of planar Gibbs states \cite{CDIV12}. See \cite{Bax89,Gri06} for  more applications. 

The FK percolation on a finite subgraph of the square lattice $\Z^2$ is a probability measure on edge configurations (each edge is declared open or closed) such that the probability of a configuration is proportional to $p^{\#\,\text{ open edges}}(1-p)^{\#\,\text{ closed edges}}q^{\#\,\text{clusters}}$, where clusters are maximal graphs connected by open edges.

A dual configuration can be defined on the dual graph $(\Z^2)^*$ by declaring every dual edge open if the corresponding edge of the primal graph is closed, and vice-versa; see Fig.~\ref{fig:1}. The dual configuration is then distributed as a FK percolation with parameters $(p^*,q^*)$ given by $q^*=q$ and $\frac{p^* p}{(1-p^*)(1-p)}=q$. It was shown in \cite{BD12} that the critical point for the FK percolation with $q\ge 1$ is given by the unique solution of $p^*(p,q)=p$, i.e. $p_c(q)=\sqrt q/(1+\sqrt q)$ (the case $q\ge 4$ was resolved much earlier in \cite{HKW78}).

Critical FK percolation exhibits a very rich behavior depending on the value of cluster-weight $q$. Exact computations in specific geometries (see e.g. \cite{Bax71,Bax73,Bax89} or the review \cite{Wu82} and references therein)
provide very precise results on the behavior of these models 
at and near criticality. It is therefore fair to say that the FK percolation is one of the most understood model of planar statistical physics.

The goal of this article is to provide an alternative approach to questions on the critical FK percolation, based on parafermionic observables rather than exact computations.

\paragraph{Parafermionic observable} Recently, an observable of the loop model, called the {\em fermonic observable}, has been introduced in the $q=2$ case \cite{Smi10,CR06} (for such value of $q$, the model
can be coupled with the Ising model via the Edwards-Sokal coupling \cite{ES88}). This observable was proved to be preholomorphic (meaning that it is a relevant discretization of a holomorphic function) and to converge in the scaling limit to a conformally covariant object. The article \cite{Smi10} also mentioned the possible generalization of this observable to other values of $q$ in $(0,4]$. In this case, the observable is a (anti)-holomorphic parafermion of fractional spin $\sigma\in[0,1]$, 
given by certain vertex operators. Similarly to the $q=2$ case, the observable is believed to converge to a conformally covariant object and to provide a deep understanding of the critical regime. 

In this article, we recall the definition of the parafermionic observable for general $q\in(0,4)$ and present several of its properties. We also introduce a slightly different observable in the $q=4$ case. The observable is shown to satisfy local relations (Proposition~\ref{relation_around_a_vertex}) that can be understood as discretizations of the Cauchy-Riemann equations when the model is critical (this proof is an easy extension of a result in \cite{Smi10}). Unfortunately, local relations provide us with half of the discrete Cauchy-Riemann equations only, and the observable is not fully preholomorphic, but rather a divergence-free differential form, in the sense that its discrete integrals along contours vanish. As mentioned above, for $q=2$, further information can be extracted from local relations and the observable satisfies a strong notion of preholomorphicity. In this case, the observable can be used to understand many properties on the model, including conformal invariance of the observable  \cite{CS09,Smi10} and loops \cite{CDHKS12,HK11}, correlations \cite{CHI12,CI12,Hon10,HS10} and crossing probabilities \cite{BDH12,CDH12,DHN10}. It can also be extended away from criticality \cite{BDC11a,DGP12}. We do not discuss the special feature of the $q=2$ case and we refer to the extensive literature for further information. 

Even though the observable is not fully preholomorphic for generic $q\in(0,4]$, it still satisfies a special property at $p_c(q)$. This property can be used to derive information on the model, and we would like to discuss two applications, one rigorous, and one conjectural.

\paragraph{First application of parafermionic observables} Using the parafermionic observable, we are able to prove that the correlation length diverges when $1\le q\le 4$. 
\begin{theorem}\label{thm:correlation length}
Fix $1\le q\le 4$, the correlation length $\xi(p)$ tends to infinity when $p\nearrow p_c(q),$
where $$\xi(p)^{-1}~=~-\inf_{n>0}\frac1n\log \phi^0_{\mathbb Z^2,p,q}(0\longleftrightarrow (n,0)).$$
\end{theorem}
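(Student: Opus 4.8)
The plan is to transfer the statement to the critical point and then to exploit, in the geometry of the universal cover of the punctured plane, the one structural property the observable retains at criticality, namely being divergence-free.

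\emph{Step 1: reduction to criticality.} For $q\geq1$ the measure $\phi^0_{\mathbb Z^2,p,q}$ is stochastically increasing in $p$, so $\xi(p)^{-1}$ is nonincreasing in $p$ and $\ell:=\lim_{p\nearrow p_c}\xi(p)^{-1}=\inf_{p<p_c}\xi(p)^{-1}\geq0$ exists; the theorem is exactly the assertion that $\ell=0$. By the FKG inequality and translation invariance, $n\mapsto-\log\phi^0_{\mathbb Z^2,p,q}(0\leftrightarrow(n,0))$ is subadditive, so $\xi(p)^{-1}$ is both its limit and its infimum; hence $\phi^0_{\mathbb Z^2,p,q}(0\leftrightarrow(n,0))\leq e^{-\ell n}$ for every $n$ and every $p<p_c$, and letting $p\nearrow p_c$ through finite boxes (finite-volume measures are continuous in $p$, and $\phi^0$ in infinite volume is the increasing limit over boxes) gives $\phi^0_{\mathbb Z^2,p_c,q}(0\leftrightarrow(n,0))\leq e^{-\ell n}$, so in fact $\xi(p_c)^{-1}=\ell$. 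It is therefore enough to show that the two-point function at $p_c$ does not decay exponentially. By the standard comparison of connectivities between lattice directions (from subadditivity and the symmetries of $\mathbb Z^2$, valid for $q\geq1$; see \cite{Gri06}), this follows once we prove that the $\phi^0_{\mathbb Z^2,p_c,q}$-probability of an open circuit surrounding a fixed face $u$ near the origin inside the dyadic annulus $\Lambda_{2n}\setminus\Lambda_n$ is at least polynomially large in $n$ along a sequence $n\to\infty$: such a circuit contains an open path of diameter $\geq n$, so a union bound produces a point $x_n$ with $\|x_n\|_\infty\asymp n$ and $\phi^0_{\mathbb Z^2,p_c,q}(0\leftrightarrow x_n)$ at least polynomially small, whence the exponential decay rate in direction $x_n/\|x_n\|_\infty$ tends to $0$, which by the directional comparison contradicts any strictly positive value of $\xi(p_c)^{-1}$.

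\emph{Step 2: the observable on the universal cover.} Recall that at $p=p_c(q)$, in a discrete Dobrushin domain, the parafermionic observable is $F(e)=\mathbb E\big[e^{i\sigma W_\gamma(e)}\,\mathbbm{1}_{\{e\in\gamma\}}\big]$, where $\gamma$ is the interface between the two boundary arcs, $W_\gamma(e)$ its winding at $e$, and $\sigma=\sigma(q)\in(0,1]$ the spin ($\sigma=1$ precisely for $q=4$); by Proposition~\ref{relation_around_a_vertex} this $F$ is divergence-free, so its discrete contour integral vanishes along any closed contour and agrees along homotopic contours. On $\mathbb Z^2$, however, $F$ is not single-valued around the face $u$: encircling $u$ increases $W_\gamma$ by $2\pi$ and hence multiplies $F$ by $e^{2i\pi\sigma}$, so the contour integral of $F$ along a small loop around $u$ is a genuine ``residue'' $\rho$, which the finite-energy property of the model forces to satisfy $|\rho|\geq c_0(q)>0$. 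I would then lift $F$ to the universal cover $\mathcal U$ of $\mathbb Z^2\setminus\{u\}$, on which it becomes single-valued, and work on a finite $M$-sheeted piece of $\mathcal U$ lying over the annulus $\Lambda_{2^M}\setminus\Lambda_1$, with Dobrushin boundary conditions chosen so that a single interface separates the inner from the outer boundary while winding $M$ times around $u$; the deck generator $\tau$ of $\mathcal U$ then satisfies $F\circ\tau=e^{2i\pi\sigma}F$.

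\emph{Step 3: telescoping and extraction.} Applying the divergence-free property on the $M$-sheeted domain and summing the copies of the residue identity transported by $\tau,\dots,\tau^{M-1}$, the winding phases telescope; since the prefactor $1-e^{2i\pi\sigma}$ is nonzero exactly when $\sigma\notin\mathbb Z$, i.e. when $q<4$ (the borderline $q=4$ being handled with the modified observable mentioned in the introduction, for which this prefactor is replaced by a nonvanishing quantity), one obtains an identity of the schematic form
\[
\rho\;=\;(1-e^{2i\pi\sigma})\sum_{k=0}^{M-1}e^{2i\pi\sigma k}\,\Phi_k\;+\;e^{2i\pi\sigma M}\,c_M ,
\]
where $\Phi_k$ is the contour integral of $F$ at ``winding level $k$'' and $c_M$ collects the outermost terms, and where both $|\Phi_k|$ and $|c_M|$ are controlled by the $\phi^0_{\mathbb Z^2,p_c,q}$-probability of an open circuit around $u$ in a dyadic annulus at scale $\asymp2^k$. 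Since $|\rho|\geq c_0$ uniformly in $M$, the $\Phi_k$ cannot all be negligible; a quantitative analysis of this identity, using that the weights $e^{2i\pi\sigma k}$ are bounded and oscillating while the circuit probabilities are nonincreasing in the scale, then forces those probabilities to decay no faster than polynomially (essentially of order $n^{-1}$ in the scale $n$) along a sequence of scales tending to infinity. Combined with Step~1, this gives $\ell=0$, that is, $\xi(p)\to\infty$ as $p\nearrow p_c$.

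\emph{The main obstacle.} The delicate part is Step~3: building the finite domain inside $\mathcal U$ and the Dobrushin boundary conditions so that the interface genuinely makes the prescribed number of windings around $u$; identifying the inner residue with an honest order-one quantity via finite energy; expressing the telescoped boundary terms $\Phi_k$ and $c_M$ in terms of bona fide circuit probabilities at the right scales; and carrying out the summation so that the resulting bound stays uniform over $q\in[1,4]$ (with $q=4$ requiring the separate observable). Because only half of the discrete Cauchy--Riemann relations are available --- Proposition~\ref{relation_around_a_vertex} supplies the divergence-free condition but not the full system --- one cannot solve for $F$, and the whole estimate has to be extracted from this single telescoped identity, which is precisely why the argument must detour through winding and circuit probabilities rather than control the two-point function directly.
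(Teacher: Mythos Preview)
Your Step 1 is essentially the paper's reduction (subadditivity, monotonicity in $p$, and passage to the limit), and your instinct to go to the universal cover is also correct. But from Step 2 onward the mechanism you propose is not the one that works, and the ``residue plus telescoping'' scheme has real gaps. The discrete observable $F$ is not a multi-valued function with monodromy $e^{2i\pi\sigma}$ around a puncture: it is an expectation defined on a fixed Dobrushin domain with fixed boundary conditions, and is perfectly single-valued there. There is no ``small-loop residue $\rho$'' to bound from below by finite energy, nor a deck-transformation relation $F\circ\tau=e^{2i\pi\sigma}F$ to telescope. Your Step 3 is, as you yourself flag, a list of desiderata (construct the winding domain, identify $\rho$, express $\Phi_k$ and $c_M$ as circuit probabilities, sum uniformly in $q$) rather than an argument; and the side claim that circuit probabilities are nonincreasing in the scale is not true in general.

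What the paper actually does is much more down-to-earth. One takes the Dobrushin domain with wired arc collapsed to the single point $0$ (so the ``interface'' is just the loop around the cluster of $0$), sums the divergence-free relation over all medial vertices, and uses the boundary values of $F$ to obtain an exact identity $\sum_{x\in\partial D}\delta_x\,\phi^0_{D,p_c,q}(0\leftrightarrow x)=1$, where the right-hand side $1$ comes from the contribution at $x=0$, not from any residue. On the planar slit box $S_n$ one computes the $\delta_x$ explicitly and finds $\delta_x\le 0$ on the slit precisely when $\sigma\ge 1/3$, i.e. $q\le 3$; this immediately gives $\sum_{x\in\partial[-n,n]^2}\phi^0_{S_n}(0\leftrightarrow x)\ge 1/C$ and hence infinite susceptibility. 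For $3<q<4$ the sign fails on the slit, so one moves to the spiral domain $U_n$ on the universal cover, where there is no slit and the same identity holds with bounded $\delta_x$; this yields a point on $\partial U_n$ connected to $0$ with probability $\ge n^{-c}$, and a sequence of crossing-probability lemmas (self-duality in strips, a pigeonhole on local minima of the lowest crossing, and comparison of boundary conditions) transfers this to a polynomial lower bound on $\phi^0_{\mathbb Z^2,p_c,q}(0\leftrightarrow x)$. For $q=4$ the observable degenerates and one replaces it by its $\sigma$-derivative $G(e)=\phi^{a,b}_{G,p}[W_\gamma(e,e_b)e^{iW_\gamma(e,e_b)}\mathbbm{1}_{e\in\gamma}]$, which satisfies the same local relation at $p_c(4)$ by a Rolle-type argument, and then the $q<4$ proof goes through verbatim. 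None of this uses monodromy, residues, circuits in dyadic annuli, or telescoping over scales.
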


In the statement, $\phi^0_{\mathbb Z^2,p,q}$ is the infinite-volume FK percolation measure with free boundary conditions, and $a\longleftrightarrow b$ if there exists a path of open edges from vertex $a$ to vertex $b$. In fact, when $1\le q\le 3$, a stronger result can be proved: \begin{theorem}\label{thm:susceptibility}
When $1\le q\le 3$, the susceptibility $\displaystyle \sum_{x\in \Z^2}\phi^0_{\mathbb Z^2,p_c,q}(0\longleftrightarrow x)$ equals $\infty.$
\end{theorem}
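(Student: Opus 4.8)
The plan is to argue by contradiction: assume that $\chi(p_c):=\sum_{x\in\Z^2}\phi^0_{\Z^2,p_c,q}(0\longleftrightarrow x)<\infty$ and derive a contradiction from the contour identities satisfied by the parafermionic observable at $p=p_c(q)$. The first step is to set up the observable not on a subgraph of $\Z^2$ but on a large finite piece $\widetilde\Omega_N$ of the universal cover of the once-punctured plane $\mathbb C\setminus\{0\}$ (the puncture sitting on a fixed face next to the origin), equipped with free boundary conditions; here $\widetilde\Omega_N$ is a ``ball of radius $N$'' and we mark a source mid-edge $a$ lying over a fixed neighbour of the origin. On $\widetilde\Omega_N$ define $F=F_{\widetilde\Omega_N}$ as in Proposition~\ref{relation_around_a_vertex} (using the variant observable when $q=4$); its defining sum carries the winding phase $e^{i\sigma W}$, and the whole point of passing to the cover is that $W$ is now an honest $\Z$-valued quantity attached to each mid-edge rather than being defined only modulo $2\pi$. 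Since $p=p_c$, the local relations of Proposition~\ref{relation_around_a_vertex} hold; summing them over the vertices enclosed by the contour $\partial\widetilde\Omega_N$ makes the interior contributions telescope and, after isolating the source $a$ by a small loop, the divergence-free property yields a discrete contour identity of the form $\bigl|\sum_{e\in\partial\widetilde\Omega_N}c_e\,F(e)\bigr|=|\,\text{(local value at }a)\,|\ge c>0$, with explicit unimodular constants $c_e$, uniformly in $N$.

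Next I would turn this boundary identity into an estimate on connectivities at $p_c$. By the triangle inequality applied to the defining sum of $F$, each $|F(e)|$ is bounded by the probability that the exploration interface passes through $\pi(e)$ (where $\pi$ is the covering map), which, by monotonicity in the boundary conditions (available because $q\ge1$), is in turn controlled by $\phi^0_{\Z^2,p_c,q}(0\longleftrightarrow\pi(e))$. The crucial gain from working on the cover is that $\partial\widetilde\Omega_N$ projects onto $\partial\Lambda_N$ covered several times together with two radial branch cuts from the origin to $\partial\Lambda_N$, so that grouping mid-edges above a common point of $\partial\Lambda_N$ lets one resum their winding phases $e^{i\sigma W}$ as partial geometric sums in $e^{2\pi i\sigma}$; carrying this out one arrives at an inequality of the type $c\le C\sum_{x\in\partial\Lambda_N}\phi^0_{\Z^2,p_c,q}(0\longleftrightarrow x)+C\sum_{k=1}^{N}\phi^0_{\Z^2,p_c,q}(0\longleftrightarrow(k,0))$ — equivalently, after optimising over $N$ along a dyadic scale, a polynomial lower bound $\phi^0_{\Z^2,p_c,q}(0\longleftrightarrow\partial\Lambda_N)\ge cN^{-\beta(q)}$ whose exponent $\beta(q)$ is strictly less than $1$ precisely when $q\le3$ (the threshold being the value of $q$ at which the spin $\sigma=\sigma(q)\in(0,1]$ reaches $\tfrac23$, with $\sigma(2)=\tfrac12$). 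Either way the contradiction is immediate once $\chi(p_c)<\infty$: the right-hand terms above tend to $0$ as $N\to\infty$, while in the polynomial formulation one notes that a cluster joining $0$ to $\partial\Lambda_N$ has at least $N$ vertices, so $\chi(p_c)\ge N\,\phi^0_{\Z^2,p_c,q}(0\longleftrightarrow\partial\Lambda_N)\ge cN^{1-\beta(q)}\to\infty$.

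The hard part — and the reason the statement is limited to $q\le3$ whereas Theorem~\ref{thm:correlation length} reaches all of $1\le q\le4$ — lies in converting the phase-carrying identity for $F$ into a genuine lower bound on the nonnegative two-point quantities $\phi^0(0\longleftrightarrow x)$: a blunt application of the triangle inequality discards exactly the cancellation that the identity encodes, so one must organise the boundary sum by winding, exploit that the winding is unbounded on the universal cover, and then control how much the winding-weighted boundary sums can be smaller than the unweighted connectivities. This last comparison is where the bound on the spin (equivalently $q\le3$) is genuinely used; for $q\le4$ one can still run a cruder version of the same scheme and conclude only that $\phi^0_{p_c}(0\longleftrightarrow\partial\Lambda_N)$ does not decay exponentially in $N$, which suffices for Theorem~\ref{thm:correlation length} but not for the summability assertion here. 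A secondary, more routine task will be to make the telescoping of Proposition~\ref{relation_around_a_vertex} rigorous on the cover and to handle the boundary contributions near $a$ and along the branch cuts, but these are standard once the observable and its local relations are in place.
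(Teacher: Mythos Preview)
Your plan has the right ingredients but misallocates them, and the argument as written has a real gap. The paper's proof for $1\le q\le3$ does \emph{not} pass to the universal cover. It works on the planar slit domain $S_n=[-n,n]^2\setminus\{(k,0):k>0\}$, viewed as a Dobrushin domain with wired arc collapsed to $\{0\}$. Summing the local relation of Proposition~\ref{relation_around_a_vertex} over interior medial vertices and using Proposition~\ref{boundary} gives an exact identity $\sum_{x\in\partial S_n}\delta_x\,\phi^0_{S_n,p_c,q}(0\longleftrightarrow x)=1$ with explicit real coefficients $|\delta_x|\le C$. The entire content of the restriction $q\le3$ is a \emph{sign}: the coefficients along the slit $\partial_n$ equal $\cos[(\sigma-1)\tfrac{3\pi}{2}]\sin[(\sigma-1)\tfrac{\pi}{4}]/\sin[(\sigma-1)\tfrac{3\pi}{4}]$, and this is $\le0$ exactly when $\sigma\le\tfrac23$, i.e.\ $q\le3$. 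Once the slit terms carry the right sign they can be dropped, leaving $\sum_{x\in\partial[-n,n]^2}\phi^0_{S_n,p_c,q}(0\longleftrightarrow x)\ge 1/C$ uniformly in $n$; summing over $n$ with comparison of boundary conditions finishes. No contradiction argument, no triangle inequality, no geometric series, no exponent to optimise.

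The universal cover is used in the paper only for $3<q\le4$, where the slit sign fails, and there it yields only polynomial decay with an \emph{uncontrolled} exponent (Proposition~\ref{prop:poldecay} and its proof), not infinite susceptibility --- the paper says so explicitly. Your claim that the cover produces $\phi^0(0\leftrightarrow\partial\Lambda_N)\ge cN^{-\beta(q)}$ with $\beta(q)<1$ when $q\le3$ is exactly the missing estimate, and you do not derive it. The concrete gap is that you first bound $|F(e)|$ by a connection probability via the triangle inequality and \emph{then} propose to resum the winding phases as geometric sums in $e^{2\pi i\sigma}$: after the triangle inequality the phases are gone and there is nothing to resum. If instead you group lifts before taking moduli, you are comparing the FK measure on the cover to that on $\Z^2$, which is precisely the lossy step that forces the paper to retreat from Theorem~\ref{thm:susceptibility} to Proposition~\ref{prop:poldecay} once $q>3$. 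A smaller issue: your displayed inequality does not contradict $\chi(p_c)<\infty$ as stated, since the branch-cut sum $\sum_{k=1}^{N}\phi^0(0\leftrightarrow(k,0))$ is increasing in $N$ and need only remain bounded.
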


The reason for working with FK percolation with cluster-weight $q\ge 1$ and not arbitrary weight $q>0$ will become clear later. Some techniques involved in the proof invoke the FKG inequality \cite[Theorem 3.8]{Gri06}, a tool which is not present for $q<1$. Let us mention that the parafermionic observables are still available for $q<1$, and corresponding predictions can be made. 

Theorem~\ref{thm:correlation length} has the following interpretation. Ehrenfest 
classified phase transitions based on the behavior of the thermodynamical free energy viewed as a function of other thermodynamical quantities. He defined the order of the phase transition as the lowest derivative of the free energy  which is discontinuous at the phase transition. For instance, the free energy is continuous yet non-differentiable when the transition is of first order. In FK percolation, the phase transition is believed to be of second order if and only if the correlation length diverges when approaching criticality. It is of first order otherwise. As a consequence, Theorem~\ref{thm:correlation length} strongly suggests that the phase transition is second order for $q<4$. 
This result is optimal in some sense, since FK percolation with $q>4$ undergoes a first order phase transition. In this case, Theorem~\ref{thm:correlation length} is no longer valid.

\paragraph{Second application parafermionic observables} This discussion is mostly due to Smirnov and Schramm. 
In 1999, Schramm \cite{Sch00} noticed that interfaces in planar models satisfy the domain Markov property, which, together with the assumption of conformal invariance, determines a one-parameter family of continuous random non-self-crossing curves, now called {\em Schramm-Loewner Evolution} ($\mathsf{SLE}$ for short). For $\kappa>0$, the $\mathsf{SLE}$($\kappa$) is the random Loewner Evolution with driving process $\sqrt \kappa B_t$, where $(B_t)$ is a standard Brownian motion. Since its introduction, $\mathsf{SLE}$ has been central in planar statistical physics and Conformal Field Theory, in particular because it provides a mathematical framework for the study of these interfaces. We refer to \cite{LSW01b,LSW01a,RS05} for a description of the fundamental fractal properties of $\mathsf{SLE}$s and to \cite{KN04} for an introduction intended for physicists. 

Proving convergence of the discrete interfaces of a certain model to $\mathsf{SLE}$ is crucial since the path properties of $\mathsf{SLE}$s are related to fractal properties of the critical models, and therefore to critical exponents (see \cite{Sch07,Smi06} for a collection of problems). The standard path to prove convergence starts by exhibiting a discrete observable converging to a conformally covariant object in the scaling limit. Holomorphic solutions to Dirichlet or Riemann boundary value problems are archetypical examples of conformally covariant objects. Therefore, it is natural to expect that discrete observables which are conformally covariant in the scaling limit are naturally preholomorphic functions which are solutions of discrete boundary problems. Finding such observables have been at the heart of planar statistical physics this last decade. 
Unfortunately, except in exceptional cases (dimers and uniform spanning trees, see \cite{LSW04a,Ken00,Ken01}, as well as Ising and FK percolation with cluster-weight $q=2$, see \cite{CS09,Smi10}), no fully preholomorphic observables have been found at the discrete level, and the best available candidates only satisfy part of discrete Cauchy-Riemann equations. 
The parafermionic observable is a typical such example, which is conjectured to converge to a conformally covariant observable. Even though a rigorous proof of this convergence remains open, one can use the parafermionic observable to predict towards which $\mathsf{SLE}(\kappa)$ the interfaces of the FK percolation with cluster-weight $q$ converges. Furthermore, the observable could  a priori be used to prove convergence to $\mathsf{SLE}$, and we intend to explain the general methodology in this paper.

\paragraph{Other models} The parafermionic observable was also introduced in the context of the high-temperature expansion of the Ising model (or $O(1)$-model) to prove convergence of the Ising interfaces towards $\mathsf{SLE}$(3) \cite{CS09}. It was later generalized to the case of loop $O(n)$-models on the hexagonal lattice. It can be proved that the discrete contour integrals of the observable vanish at $x=\sqrt{2+\sqrt{2-n}}$, where $x$ is the edge-weight of the $O(n)$-model \cite{Smi10b}. Unfortunately, the mathematical understanding of these models is fairly restricted and applications of the observables for $n\ne 1$ are restricted to few examples:
\begin{itemize}
\item arguments closely related to those exposed in this paper allow one to prove that the susceptibility is infinite at $x=1/\sqrt{2+\sqrt{2-n}}$, thus showing that a phase transition occurs, and that $1/\sqrt{2+\sqrt{2-n}}$ is an upper bound for the critical parameter $x_c$ (Nienhuis conjectured that $x_c=1/\sqrt{2+\sqrt{2-n}}$ in \cite{Nie82,Nie84}). 
\item In the $n=0$ case (corresponding to the self-avoiding walk model), the connective constant of the hexagonal lattice can be shown to be equal to $\sqrt {2+\sqrt 2}$ \cite{DS12}.
\item Let us mention without details that there are other applications \cite{BBDGG12,BGG12b,BGG12a,EGGL12}.
\end{itemize}
Later, such observables have been found in a variety of lattice models with specific weights (for instance $O(n)$-models on the square lattice and $Z_N$ models, see \cite{CI09,CR06}). 
Interestingly, weights for which discrete contour integrals of these (non-degenerate) observables vanish always correspond to weights for which Yang-Baxter equations hold. In \cite{Car09}, Cardy asks whether a direct link can be found between these two notions, and this question is probably crucial for the future development of the theory.

\paragraph{Organization of the paper} In the next section, the loop representation of the FK percolation is introduced, and the parafermionic observable is defined. Section~\ref{properties} is a discussion on the observable. We list some of its properties, and we explain how the observable is related to $\mathsf{SLE}$ theory. Section~\ref{sec:1<q<4} contains the proofs of Theorems~\ref{thm:correlation length} and \ref{thm:susceptibility}. We also introduce a parafermionic observable in the degenerated case $q=4$. Section~\ref{open questions} gathers open questions.

\paragraph{Notations}
We consider the square lattice $\mathbb Z^2$ with vertex set $\mathbb Z^2$ and edges between nearest neighbors. The {\em dual lattice} $(\mathbb Z^2)^*=(\frac12,\frac12)+\mathbb Z^2$ is given by sites corresponding to every face of $\mathbb Z^2$, and edges linking nearest neighbors. The {\em medial lattice} $(\mathbb Z^2)^\diamond$ is defined as follows: its vertices are the mid-edges of $\mathbb Z^2$, and its edges connect nearest neighbors. This lattice is a rotated and rescaled version of the square lattice. We orient every medial edge counterclockwise around faces corresponding to sites of $\mathbb Z^2$. For a graph $G$, $G^*$ and $G^\diamond$ denote the dual and the medial graphs of $G$. The boundary of a graph $G$ will be denoted by $\partial G$ (it will be clear from the context whether we consider site or edge boundary).

\paragraph{Acknowledgments}
The author would like to thank Stanislav Smirnov for introducing him to this beautiful subject and for many fruitful discussions. We also thank Aernout van Enter for his comments on the manuscript and for valuable discussions. The author was supported by the ANR grant BLAN06-3-134462, the ERC AG CONFRA, as well as by the Swiss
{FNS}.
\section{FK percolation}\label{sec:RCM}

In order to remain as self-contained as possible, we introduce the FK percolation precisely, in particular its different representations and its boundary conditions. The reader can consult the reference book \cite{Gri06} for more details, proofs and original references. 

\paragraph{Definition of the model on $\mathbb Z^2$}

Let $G$ be a finite subgraph of $\mathbb Z^2$. A \emph{configuration} $\omega$ on $G$ is a subgraph of $G$, composed of 
the same sites and a subset of its edges. The edges 
belonging to $\omega$ are called \emph{open}, the others \emph{closed}. Two sites 
$a$ and $b$ are said to be \emph{connected} if there is an \emph{open 
path}, \emph{i.e.} a path composed of open edges only, connecting them. Two sets $A$ and $B$ are 
\emph{connected} if there exists an open path connecting them (this event is denoted by 
$A\longleftrightarrow B$). Maximal connected components will be called 
\emph{clusters}. 

\emph{Boundary conditions} $\xi$ are given by a partition of $\partial G$. The graph obtained from the configuration 
$\omega$ by identifying (or \emph{wiring}) the edges in $\xi$ that 
belong to the same component of $\xi$ is denoted by $\omega \cup \xi$. Boundary conditions should be 
understood as an encoding of how sites are connected outside of $G$. Since the model exhibits long range dependency, boundary conditions are crucial. Let 
$o(\omega)$ (resp. $c(\omega)$) denote the number of open (resp.\ 
closed) edges of $\omega$ and $k(\omega,\xi)$ the number of connected 
components of $\omega\cup\xi$.  
The probability measure 
$\phi^{\xi}_{G,p,q}$ of the FK percolation on $G$ with parameters 
$p$ and $q$ and boundary conditions $\xi$ is defined by
\begin{equation}
  \label{probconf}
  \phi_{G,p,q}^{\xi} (\left\{\omega\right\}) :=
  \frac {p^{o(\omega)}(1-p)^{c(\omega)}q^{k(\omega,\xi)}}
  {Z_{G,p,q}^{\xi}}
\end{equation}
for every configuration $\omega$ on $G$, where $Z_{G,p,q}^{\xi}$ is a 
normalizing constant referred to as the \emph{partition function}. 

Three types of boundary conditions play a special role in the study of the
FK percolation:
\begin{enumerate}
\item The \emph{wired} boundary conditions, denoted by 
$\phi_{G,p,q}^1$, are specified by the fact that all the vertices on the 
boundary are pairwise wired (only one set in the partition). 
\item The \emph{free} boundary conditions, denoted by $\phi_{G,p,q}^0$, are 
specified by the absence of wirings between sites. 
\item The \emph{Dobrushin} boundary conditions: assume that $\partial G$ is a
self-avoiding polygon in $\mathbb Z^2$, let $a$ and $b$ be two sites
of $\partial G$. The triplet $(G,a,b)$ is called a {\em Dobrushin domain}. Orienting its boundary counterclockwise defines two oriented
boundary arcs $\partial_{ab}$ and $\partial_{ba}$; the Dobrushin boundary conditions are
defined to be free on $\partial_{ab}$ (there are no wirings between boundary
sites) and wired on $\partial_{ba}$ (all the boundary sites are pairwise
connected). These arcs are referred to as the \emph{free arc} and the
\emph{wired arc}, respectively. The measure associated to these boundary
conditions will be denoted by $\phi_{G,p,q}^{a,b}$.  Dobrushin boundary conditions are usually formulated for the spin Ising model and amount to setting plus spin boundary condition on $\partial_{ab}$ and minus spin boundary conditions on $\partial_{ba}$, thus creating an interfaces between pluses and minuses. Since we also need an interface here, we formulated similar conditions in the FK setting.\end{enumerate}

For $q\ge 1$, the FK percolation measure is positively correlated. In particular, it satisfies the FKG inequality \cite[Theorem 3.8]{Gri06}:
\begin{equation}\label{FKG}\phi_{G,p,q}^\xi(A\cap B)\ge \phi_{G,p,q}^\xi(A)\phi_{G,p,q}^\xi(B),\quad \forall A,B\text{ increasing},\quad \forall \xi \end{equation}
and the comparison between boundary conditions \cite[Lemma 4.14]{Gri06}, which is a direct consequence of the FKG inequality, 
\begin{equation}\label{comparison}\phi_{G,p,q}^\xi(A)\ge \phi_{G,p,q}^\psi(A),\quad \forall A\text{ increasing},\quad \forall \xi\ge \psi. \end{equation}
Above, $\xi\ge \psi$ if two wired vertices in $\psi$ are wired in $\xi$. For instance, free boundary conditions are the smallest possible boundary conditions, while wired are the largest. We will say that $\xi$ dominates $\psi$ and $\psi$ is dominated by $\xi$. The FK measure can be extended to the whole lattice $\mathbb Z^2$ by considering the limit of FK percolation measures with free boundary conditions on nested boxes (via comparison between boundary conditions, these measures form an increasing family of measures). We call the infinite-volume FK percolation measure $\phi^0_{\mathbb Z^2,p,q}$.
\paragraph{Dual representation}
As mentioned in the introduction, a configuration $\omega$ can be uniquely associated to a dual configuration on the dual graph $G^*$: each edge of the dual graph being open (resp. closed) if the corresponding edge of the primal graph is closed (resp. open) in $\omega$, see Fig.~\ref{fig:1}. We will often speak of dual-clusters or dual-open paths to refer to objects in this dual model. The configuration thus obtained is denoted $\omega^*$. Euler's formula together with a simple computation implies that  $\omega^*$ is distributed according to $\phi_{G^*,p^*,q^*}$ with $q^*=q$ and $\frac{pp^*}{(1-p)(1-p^*)}=q$. In particular, the unique $p$ such that $p=p^*$ is the critical point as shown in \cite{BD12} and \cite{HKW78}. In the future, $p_c=p_c(q)$ denotes the critical parameter of the FK percolation with cluster-weight $q$.

\begin{figure}[t]
\begin{center}
\includegraphics[width=4cm]{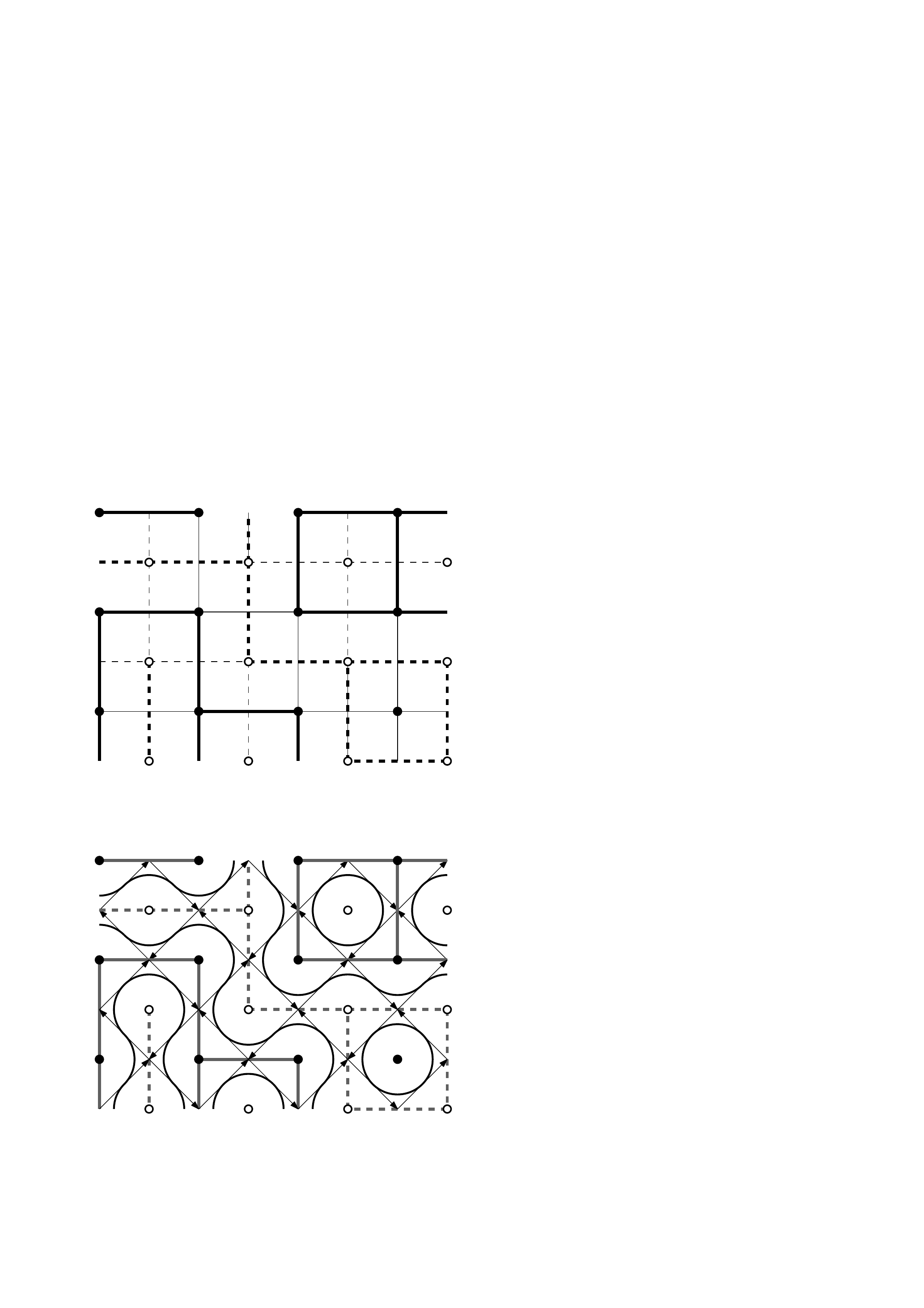}\quad\quad\quad\includegraphics[width=10cm]{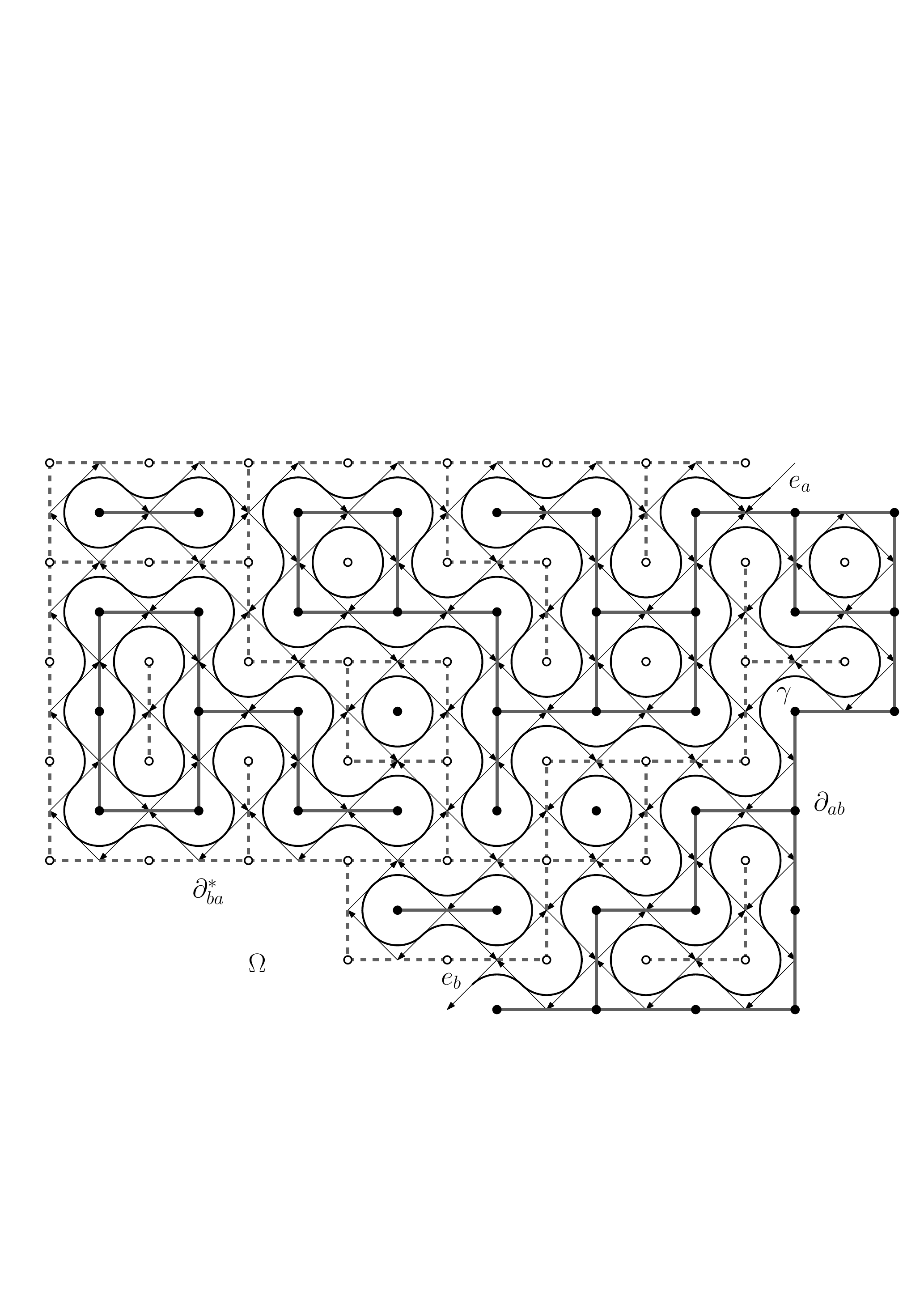}\caption{\label{fig:1}{\bf Left (top).} The configuration $\omega$ with its dual configuration $\omega^*$. \label{fig:2}{\bf Left (bottom).} The loop representation associated to $\omega$. 
\label{fig:3} {\bf Right.} A loop representation in a Dobrushin domain.}
\end{center}
\end{figure}

\paragraph{Loop representation} A third representation as a gas of loops has the advantage of attributing symmetric roles to the primal and the dual configuration. This representation corresponds to a fully packed $O(n)$-model on the square lattice. 

More precisely, consider a 
configuration $\omega$. It defines clusters in $G$ and dual clusters in 
$G^*$. Through every vertex of the medial graph $G^\diamond$ of $G$ passes either an open 
bond of $G$ or a dual open bond of $G^*$. For this reason, there is a unique way to 
draw an Eulerian (\emph{i.e.} using every edge exactly once) collection of loops on the 
medial lattice. Namely, a loop arriving at a vertex of the medial lattice 
always makes a $\pm \pi/2$ turn so as not to cross the open or dual open 
bond through this vertex, see Fig.~\ref{fig:2}. This gives a bijection between FK configurations on $G$ and 
Eulerian loop configurations on $G^{\diamond}$. 

The loops correspond to the \emph{interfaces} separating clusters from dual 
clusters. The probability measure 
can be nicely rewritten (using Euler's formula) in terms of the loop 
picture: for any configuration $\omega$,
\begin{eqnarray}
\phi_{G,p,q}^{a,b}(\omega)~=~\frac{1}{Z}~x^{o(\omega)}\sqrt q^{\ell(\omega)}
\end{eqnarray}
where $x=p/[\sqrt q (1-p)]$, $\ell(\omega)$ is the number of loops in the loop configuration associated to $\omega$, $o(\omega)$ is the number of open edges, and $Z$ is the normalization constant.

When considering Dobrushin boundary conditions on $(G,a,b)$, we obtain a slightly different representation, see Fig.~\ref{fig:3}. Besides 
loops, the configuration on $G^\diamond$ contains a single curve joining the edges 
 $e_a$ and $e_b$ between the arcs $\partial_{ba}$ and $\partial_{ab}^*$ (this is the dual arc adjacent to $\partial_{ab}$). This curve is called the 
\emph{exploration path} and is denoted by $\gamma$. It corresponds 
to the interface between the cluster connected to the wired arc $\partial_{ba}$ and the 
dual cluster connected to the free arc $\partial_{ab}^*$.

\paragraph{Definition of the observable} Fix a Dobrushin domain $(G,a,b)$. Following~\cite{Smi10}, an observable $F$ is now defined on the edges 
of the medial graph. Roughly speaking, $F$ is a modification of the 
probability that the exploration path passes through an edge. First, 
introduce the following definition. The \emph{winding} 
$\text{W}_{\Gamma}(z,z')$ of a curve $\Gamma$ between two edges $z$ and 
$z'$ of the medial graph is the total (signed) rotation (in radians) that the 
curve makes from the mid-point of the edge $z$ to that of the edge 
$z'$ (see Fig.~\ref{fig:winding}). 

\begin{definition}[Smi10,CR06]
Consider a Dobrushin domain $(G,a,b)$ and $0<q<4$. Define the {\em (edge) parafermionic observable} $F$ for any medial edge $e$ by
\begin{equation*}
  F(e) ~:=~ \phi_{G,p_c,q}^{a,b} \left({\rm e}^{{\rm i}\sigma 
  \text{W}_{\gamma}(e,e_b)} 1_{e\in \gamma}\right),
\end{equation*}
where $\gamma$ is the exploration path and $\sigma$ is given by the 
relation
$\displaystyle  \sin (\sigma \pi/2) = \sqrt{q}/2.$ 
\end{definition}
A (vertex) parafermionic observable can be defined on medial vertices by the formula $F(v):=\frac12\sum_{e\sim v}F(e)$
where the summation is over medial edges incident to $v$.
For $q\in[0,4]$, the observable $F$ is a  
 parafermion of spin $\sigma$, which is a real 
  number in $[0,1]$. 
%
%
\section{Properties of the parafermionic observable}\label{properties}
 
 The parafermionic observable possesses three fundamental properties that we now present. The first one is a local relation satisfied by the observable.

\begin{proposition}[local relation]
  \label{relation_around_a_vertex}
  Consider a medial vertex $v$ in $G^\diamond$ with four incident medial edges indexed $NW$, $SE$, $NE$ and $SW$ in the obvious way. Then,  \begin{equation}
    \label{rel_vertex}
    F(NW)-F(SE) ~=~i[F(NE)-F(SW)].
  \end{equation}
\end{proposition}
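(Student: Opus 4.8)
The plan is to prove \eqref{rel_vertex} by a direct configuration-by-configuration computation in the loop representation, using crucially that $p=p_c$ makes the weight of a configuration depend only on its number of loops. Indeed, at $p_c(q)=\sqrt q/(1+\sqrt q)$ one has $x=p_c/[\sqrt q(1-p_c)]=1$, so $\phi^{a,b}_{G,p_c,q}(\omega)=\sqrt q^{\,\ell(\omega)}/Z$ and hence $F(e)=\tfrac1Z\sum_{\omega:\,e\in\gamma}\sqrt q^{\,\ell(\omega)}{\rm e}^{{\rm i}\sigma\text{W}_\gamma(e,e_b)}$. The first observation is purely local: if a medial edge at $v$ lies on $\gamma$, then $\gamma$ both enters and leaves $v$, using two of the edges $NW,NE,SE,SW$ (or all four if it visits $v$ twice); which pairs of these four edges form the two strands through $v$ depends only on whether the primal edge $e$ corresponding to $v$ is open or closed; and along $\gamma$ the winding $\text{W}_\gamma$ changes by exactly $\pm\pi/2$ between two consecutive medial edges at $v$, this being the turn $\gamma$ makes there.

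The second ingredient is the involution $s_e$ that flips the state of $e$. Since $x=1$, the weights of $\omega$ and $s_e\omega$ differ only through the loop count, and $\ell(s_e\omega)-\ell(\omega)\in\{-1,0,+1\}$ depending on whether flipping $e$ merges a loop into another strand through $v$, only reroutes strands, or splits off a new loop. Moreover $\gamma$ passes through $v$ in $\omega$ if and only if it does in $s_e\omega$; hence the configurations in which $\gamma$ avoids $v$ contribute nothing to $F(NW)-F(SE)-{\rm i}[F(NE)-F(SW)]$, and the remaining configurations can be grouped into pairs $\{\omega,s_e\omega\}$.

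It then suffices to check that each pair $\{\omega,s_e\omega\}$ contributes $0$. Up to the symmetries of the picture (permuting the roles of the four edges, reversing the orientation of $\gamma$, exchanging the two states of $e$), there are only finitely many local patterns; in each one writes down the contribution of $\omega$ and of $s_e\omega$ to each of $F(NW),F(NE),F(SE),F(SW)$. Every such contribution is $\sqrt q^{\,\ell}$ times a phase which, by the $\pm\pi/2$-turn rule, equals ${\rm e}^{{\rm i}\sigma W}$ times an explicit combination of powers of ${\rm e}^{{\rm i}\sigma\pi/2}$ and of ${\rm i}$, with $W$ common to the whole pair. Dividing out this common phase and the common power of $\sqrt q$, the required vanishing reduces to the scalar identity ${\rm e}^{{\rm i}\sigma\pi/2}-{\rm e}^{-{\rm i}\sigma\pi/2}={\rm i}\sqrt q$, i.e. $2\sin(\sigma\pi/2)=\sqrt q$, which is exactly the defining relation for $\sigma$. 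Summing over all pairs gives \eqref{rel_vertex}.

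The main obstacle is bookkeeping rather than idea. One has to fix orientation conventions for the medial edges and for $\gamma$, and then read off correctly the sign of each $\pi/2$ turn and of each change of $\ell$. The delicate cases are those in which $\gamma$ visits $v$ twice: there, flipping $e$ may convert a double visit into a single visit together with an extra loop (or conversely), so a pair $\{\omega,s_e\omega\}$ need not consist of one single-visit and one double-visit configuration. Once the signs are pinned down, each individual case is a one-line check against $2\sin(\sigma\pi/2)=\sqrt q$; the content of the proposition is precisely that the spin $\sigma$ is tuned so that this holds.
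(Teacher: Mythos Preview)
Your proposal is correct and takes the same approach as the paper: pair configurations via the involution flipping the primal edge at $v$, discard pairs where $\gamma$ misses $v$, and verify pair-by-pair that the contribution to $F(NW)-F(SE)-i[F(NE)-F(SW)]$ vanishes using the identity ${\rm e}^{{\rm i}\sigma\pi/2}-{\rm e}^{-{\rm i}\sigma\pi/2}=i\sqrt q$. One minor point: every pair $\{\omega,s_e\omega\}$ with $\gamma$ through $v$ in fact consists of exactly one single-visit and one double-visit configuration (the paper's cases C2 and C3 are precisely the two halves of the same pairing), so your caution that a pair ``need not consist of one single-visit and one double-visit'' is unwarranted---it always does.
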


Since the proof is short and beautiful, we provide it here. The proof for the $q=2$ case is due to Smirnov \cite{Smi10}. The proof in the general case is a straightforward extension of Smirnov's lemma (it can also be found in various other places, including \cite{CR06}).

\begin{proof}
Let us assume that $v$ corresponds to a primal edge pointing $N$ to $S$. The case $E$ to $W$ is similar.
  
  We consider the involution 
  $s$ (on the space of configurations) which switches the state (open or 
  closed) of the edge of the primal lattice corresponding to $v$. 
  Let $e$ be an edge of the medial graph and denote by 
  \begin{eqnarray*}e_{\omega} &:=& 
  \phi_{G,p_c,q}^{a,b}(\omega) \, {\rm e}^{{\rm 
  i}\sigma {\rm W}_{\gamma(\omega)}(e,e_b)} 1_{e\in \gamma(\omega)}
  \end{eqnarray*}
  the contribution 
  of the configuration $\omega$ to $F(e)$.  Since $s$ is an involution, the following 
  relation holds: $$F(e)~=~\sum_{\omega} e_{\omega}~=~\frac{1}{2} 
  \sum_{\omega} \left[ e_{\omega}+e_{s(\omega)} \right]\!.$$ In order to 
  prove \eqref{rel_vertex}, it suffices to prove the following for any 
  configuration $\omega$:
  \begin{equation}
    \label{cd}
    NW_{\omega} + NW_{s(\omega)} - SE_{\omega} - SE_{s(\omega)} ~=~ i[
   NE_{\omega} + NE_{s(\omega)} - 
    SW_{\omega} -
    SW_{s(\omega)}].
  \end{equation}
  
  \begin{figure}[ht]
    \begin{center}
      \includegraphics[width=10cm]{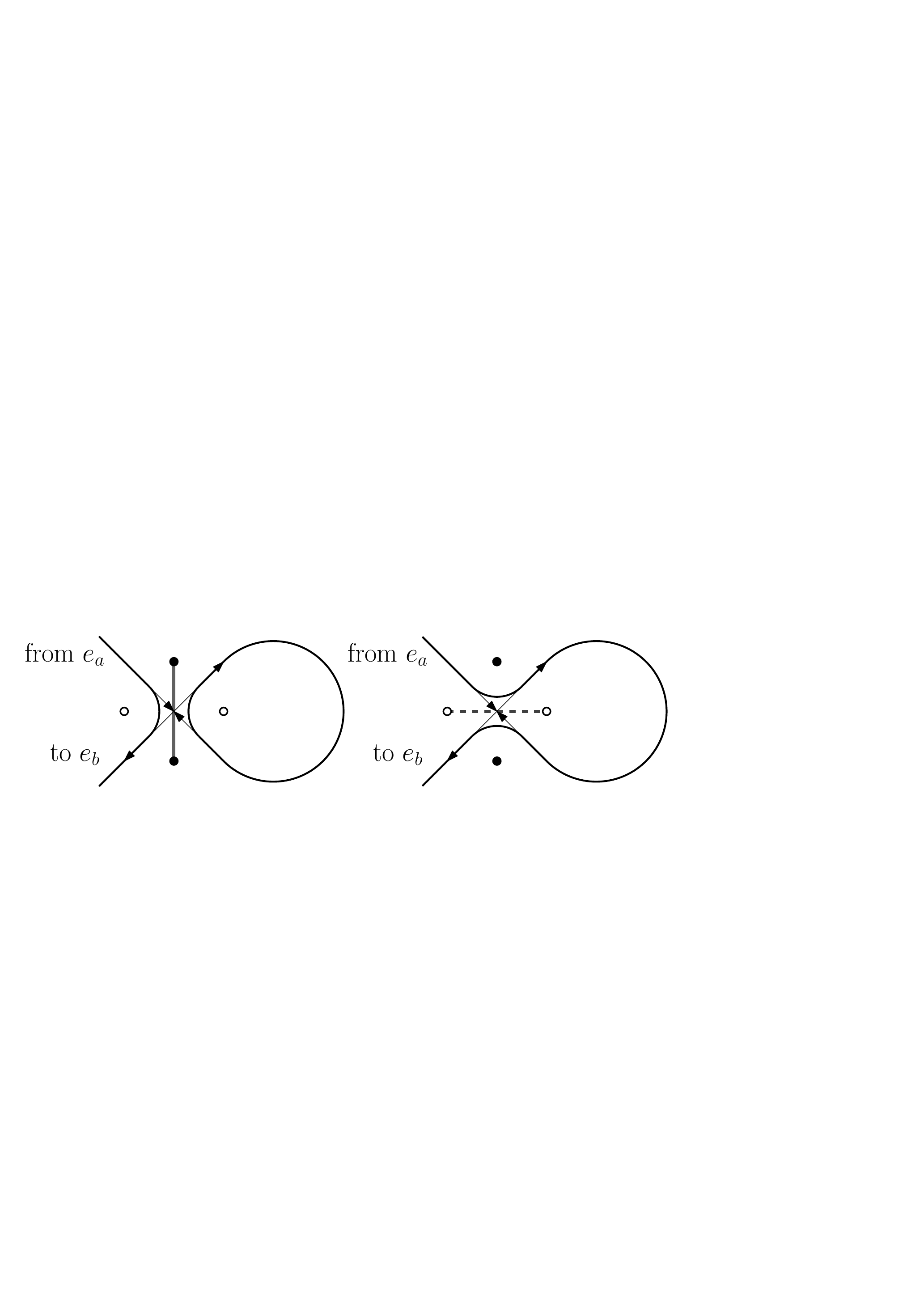}\quad\quad\quad\quad\quad\includegraphics[width=3cm]{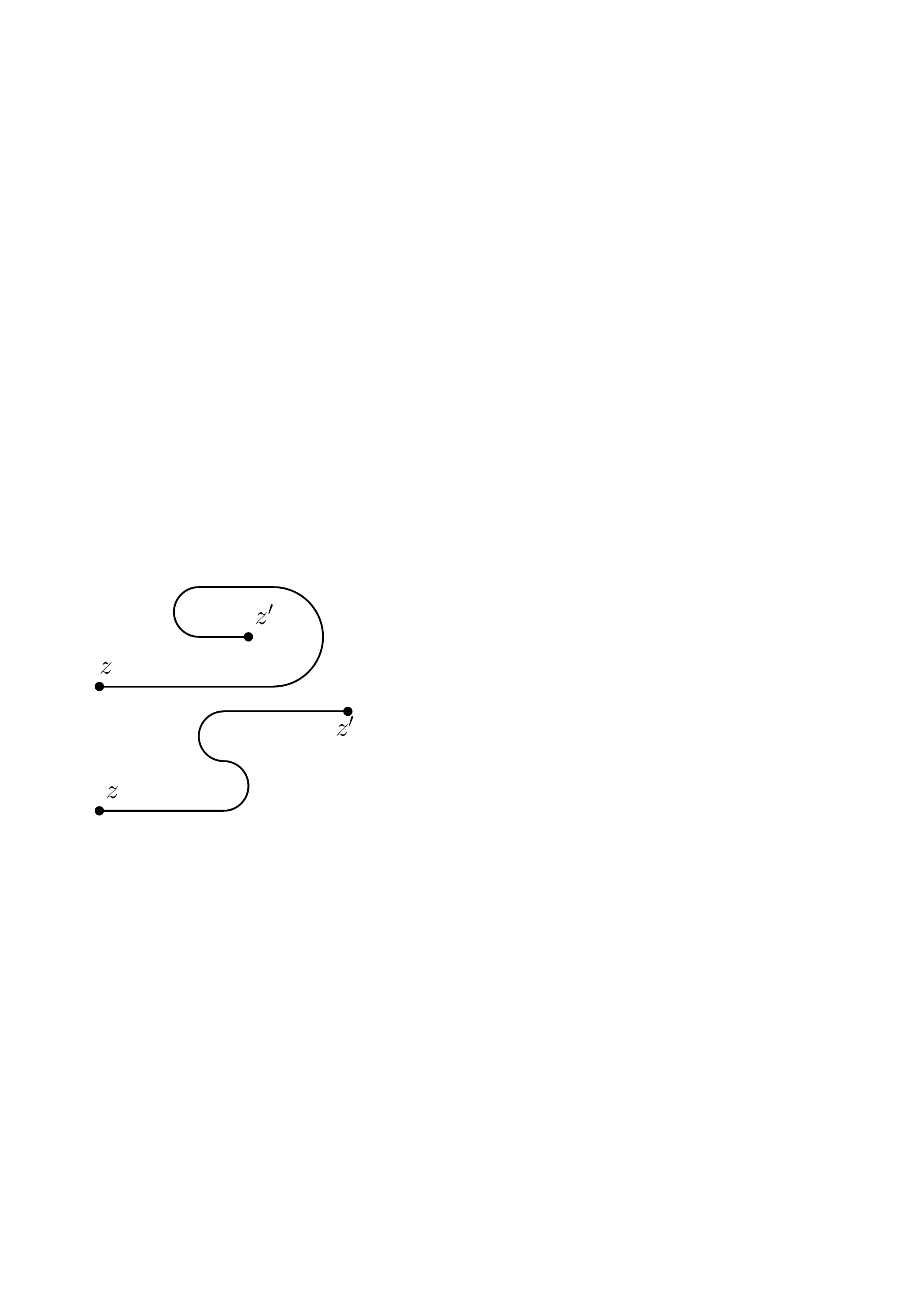}
    \end{center}
    \caption{    \label{fig:configuration} {\bf Left.} Two associated configurations $\omega$ and $s(\omega)$. \label{fig:winding}{\bf Right.} Two examples of winding. On the top, the winding is $-2\pi$, while on the bottom it is $0$.}

  \end{figure}
  
  \noindent There are three possibilities:
 \begin{itemize}
 \item[C1] $\gamma(\omega)$ does not go through any edge incident   
  to $v$. Then, neither does $\gamma(s(\omega))$. For any $e$ incident to $v$, we deduce that $e_{\omega}$ and $e_{s(\omega)}$ vanish and \eqref{cd} trivially holds.  
  
\item[C2] $\gamma(\omega)$ goes through two edges incident to $v$, see Fig.~\ref{fig:configuration}. Since $\gamma$ and the medial lattice are naturally oriented, $v$ enters through either $NW$ or $SE$ and leaves 
  through $NE$ or $SW$. Assume that $\gamma(\omega)$ 
  enters through the edge $NW$ and leaves through the edge 
  $SW$ (\emph{i.e.} that the primal edge corresponding to $v$ is open). The other cases are treated similarly. It is then
  possible to compute the contributions for $\omega$ and $s(\omega)$ of all the edges adjacent to $v$ in terms of $NW_{\omega}$. Indeed,
  \begin{itemize}
    \item The probability of $s(\omega)$ is equal to $1/\sqrt{q}$ 
      times the probability of $\omega$ (due to the fact that there is 
      one less open edge and one less loop of weight $\sqrt q$).
    \item Windings of the curve can be expressed using the winding at 
      $NW$. For instance, the winding of $NE$ in the configuration 
      $s(\omega)$ is equal to the winding of $NW$ minus $\pi/2$.
  \end{itemize}
  The contributions are given in the following table.
  \begin{center}\begin{tabular}{|c|c|c|c|c|}
    \hline
    configuration  &  $NW$ &  $SE$ &  $NE$ & $SW$\\
     \hline
    $\omega$ & $NW_{\omega}$ & 0 & 0 & ${\rm e}^{{\rm 
    i}\sigma\pi/2}NW_{\omega}$ \\
    \hline
    $s(\omega)$ & $NW_{\omega}/\sqrt q$ & ${\rm e}^{{\rm i}\sigma\pi}NW_{\omega}/\sqrt q$ & ${\rm 
    e}^{-{\rm i}\sigma\pi/2}NW_{\omega}/\sqrt q$ & ${\rm e}^{{\rm i}\sigma\pi/2}NW_{\omega}/\sqrt q$\\
    \hline
  \end{tabular}\end{center}
  Using the identity ${\rm e}^{{\rm i}\sigma\pi/2}-{\rm e}^{-{\rm 
  i}\sigma\pi/2}=i\sqrt{q}$, we deduce \eqref{cd} by summing (with the right weight) the contributions 
  of all the edges incident to $v$.
  
\item[C3] $\gamma(\omega)$ goes through the four edges incident to $v$. Then the exploration path of $s(\omega)$ goes through only two edges, and the computation is the same as in the second case.
  \end{itemize}
 In conclusion, \eqref{cd} is always satisfied and the claim is proved. 
 \end{proof}
The relations \eqref{rel_vertex} can be understood as Cauchy-Riemann equations around vertices of $G$.  It implies that the integral of $F$ on any discrete contour vanishes. Interestingly, we do not know anything around vertices of $G^*$. Therefore, the observable is not preholomorphic according to the standard definition (see e.g. \cite{Smi10b}). Nevertheless, the sequence of parafermionic observables (on approximations $\delta \mathbb Z^2\cap \Omega$ of a given domain $\Omega$) is expected to converge uniformly (as $\delta\rightarrow0$) on any compact subset to a continuous function with vanishing integrals along closed contours. In such case, Morera's theorem implies that the limit is holomorphic. In order to identify the limit, it is therefore important to study the boundary conditions of the observable. 

\begin{proposition}[Boundary conditions]
  \label{boundary}
  Let $x\in G$ be a site on the free arc $\partial_{ab}$, and $e\in \partial G^\diamond$ be a medial edge adjacent to $x$. Then,
    \begin{equation*}
    F(e)~=~{\rm e}^{{\rm i}\sigma W(e,e_b)}\ \phi_{G,p_c,q}^{a,b}(x\longleftrightarrow \text{wired arc }\partial_{ba}),
  \end{equation*}
where $W(e,e_b)$ is the winding of an arbitrary curve on the medial lattice from $e$ to $e_b$.
\end{proposition}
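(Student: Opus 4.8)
The plan is to analyze the definition of $F(e)$ directly when $e$ is a medial edge adjacent to a site $x$ on the free arc $\partial_{ab}$. The key observation is that for such an edge, the indicator $1_{e\in\gamma}$ becomes a genuine connectivity event about the configuration, and the winding factor $\mathrm{e}^{\mathrm{i}\sigma \mathrm{W}_\gamma(e,e_b)}$ becomes deterministic.

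First I would argue that the exploration path $\gamma$ passes through $e$ if and only if the primal site $x$ is connected to the wired arc $\partial_{ba}$. Recall that $\gamma$ is the interface between the cluster of the wired arc and the dual cluster of $\partial_{ab}^*$. Since $e$ is a boundary medial edge adjacent to a free-arc vertex $x$, there is a primal edge incident to $x$ and a dual edge of $\partial_{ab}^*$ that sit on either side of $e$; the exploration path runs along $e$ exactly when $x$ lies on the cluster-side of this interface, i.e. when $x\longleftrightarrow\partial_{ba}$. One has to be a little careful with the convention: the boundary of the Dobrushin domain itself (the free arc and its dual arc) can be viewed as forced to be closed (resp.\ dual-open), so the interface necessarily hugs the free arc up to $x$ and then either continues hugging the boundary or enters the bulk, according to whether $x$ is itself attached to the wired cluster. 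In all cases $\{e\in\gamma\}=\{x\longleftrightarrow\partial_{ba}\}$ as events (possibly up to a relabeling of which of the two medial edges at $x$ one picks, but the statement is for a fixed adjacent $e$).

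Next I would treat the winding. On the event $\{e\in\gamma\}$, the portion of $\gamma$ from $e$ to $e_b$ runs along the boundary of the domain — more precisely, $\gamma$ between $e$ and $e_b$ is forced to follow the dual free arc $\partial_{ab}^*$, since everything strictly between $e$ and $b$ on that side is boundary. Therefore the winding $\mathrm{W}_\gamma(e,e_b)$ does not depend on $\omega$ at all: it equals the winding $W(e,e_b)$ of the (topologically unique up to the relevant homotopy) boundary curve on the medial lattice from $e$ to $e_b$. Pulling this deterministic phase out of the expectation gives
\begin{equation*}
F(e)=\mathrm{e}^{\mathrm{i}\sigma W(e,e_b)}\,\phi^{a,b}_{G,p_c,q}(e\in\gamma)=\mathrm{e}^{\mathrm{i}\sigma W(e,e_b)}\,\phi^{a,b}_{G,p_c,q}(x\longleftrightarrow\partial_{ba}),
\end{equation*}
which is the claim.

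The main obstacle, and the only genuinely delicate point, is the first step: pinning down exactly why $\{e\in\gamma\}$ coincides with $\{x\longleftrightarrow\partial_{ba}\}$, which requires a careful bookkeeping of the Dobrushin boundary conditions in the loop representation (which boundary edges are primal-open/closed by fiat, on which side the medial edge $e$ sits relative to $x$, and how $\gamma$ is forced to run along $\partial_{ab}^*$ near $b$). This is essentially a planar-topology argument about interfaces and is best accompanied by a figure; once it is granted, the winding computation and the factorization are immediate. I would also note that the identity $W(e,e_b)$ being well-defined (independent of the chosen medial curve) uses that any two medial curves between $e$ and $e_b$ differ by contractible loops plus a fixed number of full turns, so the phase $\mathrm{e}^{\mathrm{i}\sigma W(e,e_b)}$ is unambiguous given the geometry near the free arc.
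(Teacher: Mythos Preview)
Your approach matches the paper's: identify $\{e\in\gamma\}$ with $\{x\longleftrightarrow\partial_{ba}\}$, observe that the winding is configuration-independent, and factor the deterministic phase out of the expectation.

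There is, however, an error in your justification of the deterministic winding. You claim that on $\{e\in\gamma\}$ the portion of $\gamma$ from $e$ to $e_b$ ``is forced to follow the dual free arc $\partial_{ab}^*$, since everything strictly between $e$ and $b$ on that side is boundary.'' This is false: other free-arc sites between $x$ and $b$ may themselves be connected to $\partial_{ba}$, and whenever that happens $\gamma$ re-enters the bulk between $e$ and $e_b$ rather than hugging the boundary. The correct reason the winding is deterministic, as given in the paper, is purely topological: since $e$ lies on $\partial G^\diamond$, the simple curve $\gamma$ cannot wind around $e$, and hence $\mathrm{W}_\gamma(e,e_b)$ is determined by the positions and directions of $e$ and $e_b$ alone, independently of how $\gamma$ wanders in the interior. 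Your closing remark---that any two medial curves between $e$ and $e_b$ differ by contractible loops and hence have the same winding---is in fact exactly the right idea; it just belongs in the body of the argument, applied to the random $\gamma$ versus a fixed reference curve, rather than as a side comment on the well-definedness of the right-hand side.
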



\begin{proof}
  Let $x$ be a site of the free arc $\partial_{ab}$ and recall that the exploration path
  is the interface between the open cluster connected to the wired arc $\partial_{ba}$
  and the dual open cluster connected to the free arc $\partial^*_{ab}$. Since $x$ belongs
  to the free arc, $x$ is connected to the wired arc if and only if $e$
  is on the exploration path. Therefore, $$\phi_{G,p,q}^{a,b}(x\longleftrightarrow
  \text{wired arc }\partial_{ba})=\phi_{G,p,q}^{a,b}(e\in \gamma).$$ The edge $e$ being on
  the boundary, the exploration path cannot wind around it, so that the
  winding of the curve is deterministic. Call it $W(e,e_b)$. We deduce from this
  remark that
  \begin{align*}
    F(e)~=~\phi_{G,p_c,q}^{a,b} ({\rm e}^{{\rm i}\sigma W(e,e_b)}
    1_{e\in \gamma}) &= ~{\rm e}^{{\rm i}\sigma W(e,e_b)}~\phi_{G,p_c,q}^{a,b}(e\in \gamma) \\ &=~{\rm e}^{{\rm i}\sigma W(e,e_b)}~\phi_{G,p_c,q}^{a,b}(x\longleftrightarrow \text{wired arc }\partial_{ba}).
  \end{align*}
\end{proof}

 The relation for dual sites near the wired arc can be deduced by duality (in such case the corresponding quantity is the $\phi_{G,p_c,q}^{a,b}$-probability that $v$ is connected by a dual open path to the free arc).
 
The previous proposition has two important consequences. The first one is the fact that the complex argument of the observable on the boundary is determined. At the discrete level, this corresponds to the fact that the observable is parallel to the normal vector to the power $-\sigma$ on the boundary. The second is the fact that the complex modulus of the observable equals the probability that a site on the boundary is connected to the wired arc by an open path. It enables us to relate probabilities of connections on the boundary to the observable.
\medbreak
Holomorphicity and the previous boundary conditions naturally identify the limit as the unique solution of a Riemann-Hilbert problem, and we obtain the following prediction: 
\begin{conjecture}[Smirnov]\label{FK parafermion}
Let $0<q< 4$ and $(\Omega,a,b)$ be a simply connected domain with two points on its boundary. For every $z\in \Omega$,
\begin{eqnarray}
\frac 1{(2\delta)^\sigma}F_{\delta}(z)~\rightarrow~\phi'(z)^\sigma\quad\text{when }\delta\rightarrow 0
\end{eqnarray}
where $\sigma=1-\frac 2\pi\arccos (\sqrt q/2)$, $F_\delta$ is the observable at $p_c$ in $G=\delta \mathbb Z^2\cap \Omega$ with spin $\sigma$, and $\phi$ is any conformal map from $\Omega$ to $\mathbb R\times(0,1)$ sending $a$ to $-\infty$ and $b$ to $\infty$.
\end{conjecture}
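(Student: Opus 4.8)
The plan is to carry out the three-step programme that, for $q=2$, was completed in \cite{Smi10,CS09}, and to pinpoint the single ingredient that is genuinely missing for $q\neq 2$. Write $G_\delta:=(2\delta)^{-\sigma}F_\delta$ for the rescaled observable on $G=\delta\Z^2\cap\Omega$. \emph{Step (a)}: prove an a priori regularity statement, namely that $(G_\delta)$ is uniformly bounded on every relatively compact subset of $\Omega$ and equicontinuous there, hence precompact for uniform convergence on compact subsets. \emph{Step (b)}: show that every subsequential limit is holomorphic on $\Omega$. \emph{Step (c)}: show that every subsequential limit has the boundary behaviour that forces it, through a Riemann--Hilbert problem with a unique solution, to equal $\phi'^\sigma$; since the limit is then unique, subsequential convergence upgrades to full convergence. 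Steps (b) and (c) are, given (a), already within reach from the results recorded above; step (a) is the crux and is open for $q\neq2$.

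For step (b): as observed after Proposition~\ref{relation_around_a_vertex}, the relation \eqref{rel_vertex} (holding at every medial vertex) is equivalent to the vanishing of the discrete contour integral $\oint_\gamma F_\delta\,\mathrm d\zeta$ along every closed contour $\gamma$ on the medial lattice; this is the divergence-free property of the form $F_\delta$. Granting step (a), let $G_{\delta_k}\to g$ uniformly on compact subsets with $g$ continuous; then $g$ has vanishing integral along every closed contour in $\Omega$, and Morera's theorem shows $g$ is holomorphic. Note that the absence of a companion relation ``around vertices of $G^*$'' — the reason $F$ fails to be preholomorphic in the sense of \cite{Smi10b} — does not obstruct this step: once continuity of the limit is available, the missing half of the discrete Cauchy--Riemann relations becomes invisible.

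For step (c): let $z$ be a boundary point of the free arc $\partial_{ab}$ and $e$ an adjacent medial edge. Proposition~\ref{boundary} gives $F_\delta(e)=\mathrm e^{\mathrm i\sigma W(e,e_b)}\,\phi^{a,b}_{G,p_c,q}(z\longleftrightarrow\partial_{ba})$, so $G_\delta(e)$ is a nonnegative real multiple of $\mathrm e^{\mathrm i\sigma W(e,e_b)}$; since $W(e,e_b)$ coincides, up to an additive constant, with the signed total turning of the boundary tangent from $e$ to $e_b$, this forces any holomorphic subsequential limit $g$ to be parallel to $\phi'^\sigma$ on $\partial_{ab}$ (with the orientation conventions for $\gamma$ and $\phi$ chosen consistently), and the dual form of Proposition~\ref{boundary} gives the same statement on the wired arc $\partial_{ba}$. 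Thus $g$ is holomorphic on $\Omega$ with $g\parallel\phi'^\sigma$ on $\partial\Omega\setminus\{a,b\}$; such a Riemann--Hilbert problem has $\phi'^\sigma$ as its unique solution up to a positive real multiple, and the multiple — in particular the normalising power $2^\sigma$ — is pinned by examining a fixed-size neighbourhood of $b$, where $F_\delta(e_b)=\phi^{a,b}_{G,p_c,q}(e_b\in\gamma)=1$ matches the $(z-b)^{-\sigma}$ singularity of $\phi'^\sigma$ at $b$. Hence $g=\phi'^\sigma$, which identifies the limit and concludes.

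The hard part is step (a), the a priori bounds $|G_\delta|\le C(K)$ on relatively compact $K\subset\Omega$ together with equicontinuity. For $q=2$ one integrates $F_\delta$ to a real function $H_\delta$ on the vertices of $G$ and of $G^*$; the \emph{full} set of discrete Cauchy--Riemann relations (relations around the faces of both $G$ and $G^*$, valid only when $q=2$) makes $H_\delta$ discrete subharmonic on one sublattice and superharmonic on the other, with a difference controlled by $F_\delta$, and uniform bounds plus equicontinuity for $H_\delta$ — hence for $F_\delta$ — follow from discrete potential theory together with RSW-type crossing estimates. For general $q\in(0,4)$ only the relations of Proposition~\ref{relation_around_a_vertex} hold; the primitive $H_\delta$ then lives on a single sublattice and carries no (sub/super)harmonicity, and no substitute for this input is known. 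Compounding the difficulty, uniformly positive crossing probabilities (an RSW theory) are themselves not available in this generality. Establishing any such a priori regularity estimate for the parafermionic observable is precisely the missing step, and the principal obstacle to proving the conjecture.
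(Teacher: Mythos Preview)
The statement is a \emph{conjecture}, and the paper does not prove it: immediately after stating it the author writes ``Importantly, $F$ is not determined by the collection of relations \eqref{rel_vertex} for general $q$ (the number of variables exceeds the number of equations) and a proof of this conjecture is still lacking.'' Only the $q=2$ case is a theorem (Smirnov), owing to the additional $s$-holomorphicity structure that the paper singles out.

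Your write-up is therefore not a proof but an outline of the programme, and you are explicit about this: you flag step~(a) --- a priori boundedness and equicontinuity of the rescaled observable --- as ``the crux'' and ``open for $q\neq 2$'', and close by naming it ``the principal obstacle to proving the conjecture''. That diagnosis matches the paper's discussion exactly: the vanishing of discrete contour integrals (Proposition~\ref{relation_around_a_vertex}) together with Morera's theorem would give holomorphicity of any continuous subsequential limit, and the boundary relation (Proposition~\ref{boundary}) would then pin down the limit via a Riemann--Hilbert problem; what is missing is precisely the precompactness input, which for $q=2$ comes from the extra half of the Cauchy--Riemann relations (the $s$-holomorphicity that makes the primitive sub/superharmonic). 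So your proposal is an accurate heuristic account consistent with the paper, but neither you nor the paper supplies a proof --- there is none at present for $q\notin\{0,2\}$.
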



Importantly, $F$ is not determined by the collection of relations \eqref{rel_vertex} for general $q$ (the number of variables exceeds the number of equations) and a proof of this conjecture is still lacking. Let us mention a very important exception. For $q=2$, which corresponds to $\sigma=1/2$, the complex argument modulo $\pi$ of the edge-observable inside the domain depends on the orientation of the edge only (the winding takes value in $\theta+2\pi i \mathbb Z$ and therefore ${\rm e}^{{\rm i}\frac12 W_\gamma(e,e_b)}$ equals ${\rm e}^{{\rm i}\theta/2}$ or $-{\rm e}^{{\rm i}\theta/2}$). This specificity implies a stronger notion of discrete holomophicity for the observable, called $s$-holomorphicity; see \cite{Smi10,Smi10b,DCS11}. In particular, in such case the previous conjecture is a theorem due to Smirnov: the observable converges in the scaling limit to $\sqrt{\phi'(z)}$; see \cite{Smi10} again.
\medbreak
An interesting by-product of the conformal covariance of an observable is the following application. 
In all the known cases of convergence of discrete interfaces to $\mathsf{SLE}$, one starts a with conformally covariant observable of the system. After proving precompactness of interfaces in a relevant space of random Loewner chains (see \cite{Law05,KN04} for definitions), the so-called driving process of the Loewner chain can be identified using the conformal covariance of the observable together with L\'evy's theorem.  We refer to \cite{KS10,LSW04a,SS05} for examples of this scheme in the case of  Loop-Erased Random Walks, Uniform Spanning Trees, Harmonic Explorer, Ising model and to \cite{KS10} for a conditional result in the general case. In \cite{Smi06}, Smirnov proposed the following general program in order to prove convergence to $\mathsf{SLE}$.

\begin{enumerate}
\item Prove compactness of the interfaces. 
\item Show that sub-sequential limits are Loewner chains (with unknown random
driving process $W_t$). 
\item Prove the convergence of discrete observables of the model.
\item Extract from the limit of these observables enough information to evaluate
the conditional expectation and quadratic variation of increments of $W_t$. In order to do so, the observable in Step 3 will be chosen to be conformally covariant in the scaling limit, and to be a martingale for the interfaces. In such case, L\'evy's theorem and a small computation allows to identify $W_t$ to be $\sqrt{\kappa}B_t$, where $B_t$ is the standard Brownian motion. As a consequence, all sub-sequential limits must be $\mathsf{SLE}$($\kappa$). 
\end{enumerate}
For FK models, the first step has been proved in \cite{Dum11}. The second step is open for general $q\in(0,4)$, but is known for $q=1$ or 2. The third step should be the most difficult, and it has been implemented only for $q=2$ and 0. 
The choice of the observable in Step 3 is not determined uniquely. The main requirements to be able to implement Step 4 later on is that the observable is conformally covariant in the scaling limit and is a martingale of the discrete exploration path (and therefore its scaling limit must be a martingale for the limiting curve).
The simplest $\mathsf{SLE}$ martingales are given by  $g_t'(z)^\alpha [g_t(z)-W_t]^\beta$, where $\kappa=4(\alpha-\beta)/[\beta(\beta-1)]$. 
The (conjectured) limits of parafermionic observables are of the previous forms with $(\alpha,\beta)=(\sigma,-\sigma)$. Parafermionic observables can therefore be viewed as discretizations of very simple $\mathsf{SLE}$ martingales. 
In fact, discrete parafermionic observables are already martingales at the discrete level (with respect to the discrete exploration path). 
\begin{proposition}[Martingale property]\label{Martingale}
Fix a Dobrushin domain $(\Omega,a,b)$. The FK fermionic observable $M_n(z)=F_{\Omega\setminus\gamma[0,n],\gamma_n,b}(z)$ is a martingale with respect to $(\mathcal F_n)$ where $\mathcal F_n$ is the $\sigma$-algebra generated by the FK interface $\gamma[0,n]$ (here the curve is parametrized by the number of lattice steps). 
\end{proposition}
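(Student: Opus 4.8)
The plan is to prove the martingale property by a direct computation, exploiting the fact that the parafermionic observable is itself defined as an expectation and that conditioning on the interface up to step $n$ simply restricts the probability measure to a smaller Dobrushin domain via the domain Markov property of FK percolation. First I would recall that, given the interface $\gamma[0,n]$, the conditional law of the rest of the configuration in $\Omega$ is exactly the FK measure $\phi^{\gamma_n,b}_{\Omega\setminus\gamma[0,n]}$ at $p_c$ on the slit Dobrushin domain $(\Omega\setminus\gamma[0,n],\gamma_n,b)$: the explored edges are frozen, the wired arc is extended along the explored interface, and the remaining medial edges still carry a well-defined exploration path from $\gamma_n$ to $b$. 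This is the standard domain Markov property for FK percolation applied to the loop representation, and it is where the Dobrushin boundary conditions are tailored precisely so that slitting the domain along the interface yields another Dobrushin domain.

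Next I would unwind the definition of $M_n(z)=F_{\Omega\setminus\gamma[0,n],\gamma_n,b}(z)$. By the previous paragraph,
\[
M_n(z)~=~\phi^{a,b}_{\Omega,p_c,q}\br{{\rm e}^{{\rm i}\sigma {\rm W}_{\gamma}(z,e_b)}\,1_{z\in\gamma}\ \middle|\ \mathcal F_n},
\]
up to one bookkeeping point: the winding ${\rm W}_{\gamma[n,\infty)}(z,e_b)$ computed inside the slit domain differs from the full winding ${\rm W}_{\gamma}(z,e_b)$ only by the winding of $\gamma[0,n]$, which is a constant fixed by $\mathcal F_n$ (it depends only on the explored portion of the curve, hence is $\mathcal F_n$-measurable). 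Since the full observable at step $0$ is $M_0(z)=F_{\Omega,a,b}(z)$ and the winding reference point $e_b$ is common, these deterministic winding prefactors telescope correctly, and one checks that the conditional expectation of the step-$0$ contribution given $\mathcal F_n$ equals $M_n(z)$ exactly (not merely up to a phase). Once this identity $M_n(z)=\phi^{a,b}_{\Omega,p_c,q}(X\mid\mathcal F_n)$ is established for the fixed integrable random variable $X={\rm e}^{{\rm i}\sigma {\rm W}_{\gamma}(z,e_b)}1_{z\in\gamma}$, the martingale property $\phi^{a,b}(M_{n+1}(z)\mid\mathcal F_n)=M_n(z)$ is immediate from the tower property of conditional expectation, since $\mathcal F_n\subseteq\mathcal F_{n+1}$ and $M_n$ is manifestly $\mathcal F_n$-measurable and bounded (by $1$).

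The main obstacle — really the only subtle point — is the careful treatment of the winding term and the verification that the deterministic phase coming from $\gamma[0,n]$ is exactly absorbed when one passes from the observable on $\Omega$ to the observable on the slit domain $\Omega\setminus\gamma[0,n]$. One must be attentive to two conventions: first, that the spin-$\sigma$ weight ${\rm e}^{{\rm i}\sigma{\rm W}_\gamma(z,e_b)}$ is additive in the winding (so ${\rm W}_{\gamma}(z,e_b)={\rm W}_{\gamma[n,\cdot)}(z,e_b)+{\rm W}_{\gamma[0,n]}(\gamma_n,e_b)$ whenever $z$ is on the unexplored part, with an appropriate interpretation of the reference edges at $\gamma_n$); and second, that on the event $\{z\in\gamma\}$ conditioned on $\mathcal F_n$, either $z$ has already been visited (then $1_{z\in\gamma}$ is $\mathcal F_n$-measurable and equals its step-$n$ value, and $z$ contributes to neither $M_n$ nor the conditional expectation in the same way — one handles this boundary-of-exploration case separately) or $z$ lies in the slit domain and the domain Markov property applies verbatim. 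Dealing cleanly with edges $z$ already swallowed by $\gamma[0,n]$ is the place where one should be slightly careful, but it causes no real difficulty: for such $z$ the observable $M_n(z)$ is a deterministic function of $\mathcal F_n$ and remains so for all later times, hence is trivially a martingale. I would organize the proof as: (i) state and invoke the domain Markov property in the loop representation; (ii) decompose the winding and check the phase cancellation; (iii) identify $M_n(z)$ as a conditional expectation of a fixed variable; (iv) conclude by the tower property; (v) remark on the already-explored edges.
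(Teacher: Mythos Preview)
Your approach is exactly the paper's: use the domain Markov property to recognize $M_n(z)$ as the conditional expectation $\phi^{a,b}_{\Omega,p_c,q}\big({\rm e}^{{\rm i}\sigma W_\gamma(z,e_b)}1_{z\in\gamma}\mid\mathcal F_n\big)$ of a fixed bounded random variable, then apply the tower property. The paper's proof is precisely this, compressed into four sentences.

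One point where you overcomplicate matters: your step (ii) on phase cancellation is unnecessary, and the decomposition you wrote is not quite right. Because the winding $W_\gamma(z,e_b)$ is measured from $z$ to the \emph{terminal} edge $e_b$, and for an unexplored $z$ the portion of $\gamma$ running from $z$ to $e_b$ lies entirely in the future $\gamma[n,\cdot)$, the winding computed in the slit domain equals the full winding \emph{identically}, with no $\mathcal F_n$-measurable correction to track. (This is exactly why the observable is defined with reference point $e_b$ rather than $e_a$.) Your formula $W_\gamma(z,e_b)=W_{\gamma[n,\cdot)}(z,e_b)+W_{\gamma[0,n]}(\gamma_n,e_b)$ does not make sense as written, since $\gamma[0,n]$ never reaches $e_b$; the correct statement is simply $W_\gamma(z,e_b)=W_{\gamma[n,\cdot)}(z,e_b)$ with nothing to cancel. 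Your conclusion that everything matches exactly is nonetheless correct, so the argument goes through.
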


\begin{proof}
For a Dobrushin domain $(\Omega,a,b)$, the slit domain created by ``removing'' the first $n$ steps of the exploration path is again a Dobrushin domain. Conditionally on $\gamma[0,n]$, the law of the FK percolation in this new domain is exactly $\phi_{\Omega^\diamond\setminus \gamma[0,n]}^{\gamma_n,b}$. Note that this is due to the Domain Markov property. This observation implies that $M_n(z)$ is the random variable $1_{z\in \gamma}{\rm e}^{{\rm i}\sigma W_{\gamma}(z,e_b)}$ conditionally to $\mathcal F_n$. Thus, it is automatically a martingale.
\end{proof}

In conclusion, the parafermionic observables provide us with a natural family of martingales for discrete exploration paths for which we know what the scaling limit should be. Therefore, the third step, which could a priori be performed with any well-chosen observable, can be done with the parafermionic observable and Step 3 boils down to Conjecture~\ref{FK parafermion}. 

Fix $q\in[0,4]$. Assuming that the three first steps have been implemented with the parafermionic observable, the fourth step of the program is easy. Conjecture~\ref{FK parafermion} implies that the observable is of the form $g_t'(z)^\alpha [g_t(z)-W_t]^\beta$ in the scaling limit, where $(\alpha,\beta)=(\sigma,-\sigma)$. In particular, it is a martingale for $\mathsf{SLE}(8/(\sigma+1))$. The last step will thus lead to the fact that the limit of discrete interfaces is $\mathsf{SLE}(8/(\sigma+1))$. By replacing $\sigma$ by its expression in terms of $q$, we obtain the following prediction

\begin{conjecture}[Schramm, \cite{Sch07}]\label{FK SLE}
The law of critical FK interfaces with cluster-weight $q\in [0,4]$ converges to the Schramm-Loewner Evolution with parameter $\kappa=\frac{4\pi}{\arccos(-\sqrt q/2)}$.\end{conjecture}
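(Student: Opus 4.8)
The plan is to carry out the four-step program of Smirnov recalled above, using the parafermionic observable $F$ as the conformally covariant martingale; this reduces Conjecture~\ref{FK SLE} to Conjecture~\ref{FK parafermion} together with soft arguments. (Since the crossing estimates below rest on the FKG inequality, the scheme targets $1\le q\le 4$; for $q<1$ it would need to be supplemented, and for $q=2$ it is already a theorem.) Fix a simply connected $(\Omega,a,b)$ and let $\gamma_\delta$ be the critical FK exploration path in $\delta\mathbb Z^2\cap\Omega$ with Dobrushin boundary conditions. By the crossing estimates of \cite{Dum11}, valid for $1\le q\le 4$, the family $(\gamma_\delta)$ is precompact in the space of Loewner chains; strengthening these estimates to the precise form required by the framework of \cite{KS10} would moreover ensure that every subsequential limit is almost surely the trace of a Loewner chain with some continuous driving function $(W_t)$ (presently known for $q=1,2$). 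For $q=4$ one first replaces $F$ by the degenerate observable of Section~\ref{sec:1<q<4} and checks that the same a priori estimates survive. This realizes Steps 1 and 2.

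The crux is Step 3, where I would prove Conjecture~\ref{FK parafermion}: that $(2\delta)^{-\sigma}F_\delta\to\phi'(\cdot)^\sigma$ uniformly on compact subsets of $\Omega$. By Proposition~\ref{relation_around_a_vertex} the discrete contour integrals of $F_\delta$ along cycles of the medial lattice vanish, so once precompactness of the normalized observables is known, Morera's theorem forces every subsequential limit to be holomorphic on $\Omega$. Precompactness requires an a priori bound: Proposition~\ref{boundary} identifies $|F_\delta|$ on the free arc with a connection probability, hence bounds it by $1$ on $\partial\Omega$, and I would propagate this into the bulk using the RSW-type inputs of \cite{Dum11}. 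The boundary relations of Proposition~\ref{boundary} and their dual counterpart then determine the boundary behaviour of any limit $f$ --- the argument of $f$ equals that of $(\text{unit tangent})^{-\sigma}$ on each boundary arc --- which pins $f$ down as the unique solution of the associated Riemann--Hilbert problem in the strip $\mathbb R\times(0,1)$, namely $\phi'(z)^\sigma$. The difficulty is precisely the precompactness and rigidity of $F_\delta$: the relations \eqref{rel_vertex} provide only ``half'' of the discrete Cauchy--Riemann equations and do not by themselves control $F_\delta$, and the extra structure that closes the argument when $q=2$ --- $s$-holomorphicity, i.e. a second family of relations around dual vertices --- has no known analogue for general $q\in(0,4)$. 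Supplying such relations, or an alternative compactness/identification argument for the observable, is the main obstacle and presumably requires an idea beyond the parafermionic calculus of Section~\ref{properties}.

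Granting Conjecture~\ref{FK parafermion}, Step 4 is easy. By Proposition~\ref{Martingale}, $M_n(z)=F_{\Omega\setminus\gamma[0,n]}(z)$ is an $(\mathcal F_n)$-martingale. Uniformize $(\Omega,a,b)$ onto $(\mathbb H,0,\infty)$ and, along a subsequence, let the image of $\gamma_\delta$ converge to a Loewner chain with maps $(g_t)$ and continuous driving function $(W_t)$. Applying Conjecture~\ref{FK parafermion} to the slit domains $\Omega\setminus\gamma[0,t]$ and expressing the uniformizing map onto the strip through $g_t$ and $W_t$ shows that, for each fixed $w\in\mathbb H$,
\begin{equation*}
t\ \longmapsto\ g_t'(w)^{\sigma}\,\big(g_t(w)-W_t\big)^{-\sigma}
\end{equation*}
is a continuous martingale. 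Applying It\^o's formula and demanding that the drift vanish for every $w$ forces $(W_t)$ to be a continuous martingale with $\langle W\rangle_t=\kappa t$, where $\kappa=4(\alpha-\beta)/[\beta(\beta-1)]=8/(\sigma+1)$ for $(\alpha,\beta)=(\sigma,-\sigma)$; L\'evy's characterization then gives $W_t=\sqrt\kappa\,B_t$, so every subsequential limit of $(\gamma_\delta)$ is $\mathsf{SLE}(\kappa)$, and uniqueness of the limit promotes this to full convergence. Substituting $\sigma=1-\tfrac2\pi\arccos(\sqrt q/2)$, equivalently $\sigma+1=\tfrac2\pi\arccos(-\sqrt q/2)$, yields $\kappa=\dfrac{4\pi}{\arccos(-\sqrt q/2)}$, as claimed.
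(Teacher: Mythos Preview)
Your proposal follows exactly the same route as the paper's discussion surrounding the conjecture: Smirnov's four-step program, with Step~3 reduced to Conjecture~\ref{FK parafermion} and Step~4 carried out via Proposition~\ref{Martingale}, It\^o/L\'evy, and the identification $\kappa=8/(\sigma+1)=4\pi/\arccos(-\sqrt q/2)$. You also correctly flag, as the paper does, that this is not a proof but a conditional derivation: the statement is a conjecture, and the missing ingredient --- precompactness/identification of $F_\delta$ for $q\ne 2$ in the absence of $s$-holomorphicity --- is precisely the open problem the paper leaves unresolved.
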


The previous discussion shows that conformal invariance in the scaling limit is not a required assumption  to obtain this conjecture. We only required that the parafermionic observable admits a scaling limit. Of course, this assumption is extremely hard to justify rigorously in general. 
\medbreak
The conjecture was proved by Lawler, Schramm and Werner \cite{LSW04a} for $q=0$: they showed that the perimeter curve of the uniform
spanning tree converges to $\mathsf{SLE}$(8). Note that
the loop representation with Dobrushin boundary conditions still makes sense for $q=0$ (more precisely for the model obtained by letting $q\rightarrow 0$ and $p/q\rightarrow 0$). In fact, configurations have no loops, just a curve running from $a$ to
$b$ (which then necessarily passes through all the edges), with all configurations being
equally probable. The $q=2$ case was proved in \cite{Smi10} and follows from the convergence of the parafermionic observable. In these cases, the spin is related to the central charge of the conformal field theory describing the critical behavior. This relation is expected to hold whenever $q\le 4$. For values of $q\in[0,4]\setminus\{0,2\}$, Conjecture~\ref{FK parafermion} and a fortiori Conjecture~\ref{FK SLE} are open. The $q=1$ case is particularly interesting, since it corresponds to bond percolation on the square lattice.

\section{Application to the study of the order of the phase transition}\label{sec:1<q<4}

Let us divide the proof of Theorem~\ref{thm:correlation length} into three cases. First, the easy case $1\le q\le 3$. Second, the slightly more technical case $3<q<4$. Third, the $q=4$ case, for which we introduce an alternative parafermionic observable. 
For $1\le q\le 3$, Theorem~\ref{thm:correlation length} will be a direct consequence of Theorem~\ref{thm:susceptibility}. For $3\le q\le 4$, we will in fact prove the following weak version of Theorem~\ref{thm:susceptibility}, which is also sufficient to imply Theorem~\ref{thm:correlation length}:
 \begin{proposition}\label{prop:poldecay}
 Let $q\in[1,4)$. There exists $\alpha=\alpha(q)>0$ such that 
 $$\phi^0_{\mathbb Z^2,p_c,q}(0\longleftrightarrow x)~\ge~\frac{1}{|x|^\alpha}.$$
 \end{proposition}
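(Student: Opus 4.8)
The plan is to deduce the polynomial lower bound on $\phi^0_{\mathbb Z^2,p_c,q}(0\longleftrightarrow x)$ from the parafermionic observable by exploiting its two properties established above: the vanishing of discrete contour integrals (a consequence of Proposition~\ref{relation_around_a_vertex}) and the boundary behaviour (Proposition~\ref{boundary}), which links the modulus of $F$ on the free arc to the connection probability $\phi^\diamond_{p_c,q}(x\longleftrightarrow \partial_{ba})$. The natural domain to work in is a discrete rectangle $R_{n,m}=[-n,n]\times[0,m]$ (or a half-plane-like strip) with Dobrushin boundary conditions: wired arc along the bottom side and free arc along the top and the two vertical sides, so that the exploration path runs between the two corners of the bottom edge. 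In such a symmetric Dobrushin domain the observable is explicit enough on the boundary to be useful.

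\medbreak
\noindent First I would write the discrete contour relation: summing $F$ along the medial edges bordering the free arc $\partial_{ab}$ and using that $\sum_e F(e)$ over a closed contour vanishes, one gets an identity of the form
\begin{equation*}
\sum_{e\subset \text{top}} c_e\,F(e) \;+\; \sum_{e\subset \text{sides}} c_e\,F(e) \;=\; \sum_{e\subset \text{bottom}} c_e\, F(e),
\end{equation*}
where the coefficients $c_e$ are unimodular phases coming from the deterministic windings $W(e,e_b)$ on the boundary, as in Proposition~\ref{boundary}. On the bottom (wired) side the relevant quantity is, by duality, the probability that a dual vertex is connected to the free arc, which is at most $1$; on the top and side (free) edges the modulus of $F(e)$ is exactly $\phi^{a,b}_{p_c,q}(x\longleftrightarrow \partial_{ba})$. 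The phases $c_e$ along the top arc are constant up to the global winding, while along the two vertical sides they rotate by a fixed total angle; the key point (this is exactly where $\sigma$ and hence the restriction $q\le 4$, equivalently $\sigma\in[0,1]$, enters) is that these phases do not conspire to cancel — a careful choice of which projection of the complex identity to take, together with $\sigma\le 1$, guarantees that the real (or a suitably rotated) part of the side-and-top contribution is bounded below by a positive multiple of $\sum_{x\in\text{top}}\phi^{a,b}_{p_c,q}(x\longleftrightarrow \partial_{ba})$ minus a bounded error from the bottom. This yields
\begin{equation*}
\sum_{x\in \text{top side of }R_{n,m}} \phi^{a,b}_{R_{n,m},p_c,q}(x\longleftrightarrow \partial_{ba}) \;\leq\; C,
\end{equation*}
for a constant $C$ independent of $n,m$, i.e. the observable forces a uniform bound on the total boundary-to-boundary connection probability across a slit of width $\sim n$ at height $m$.

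\medbreak
\noindent Second, I would convert this boundary estimate into a lower bound on a single connection probability by a second-moment / pigeonhole argument combined with the FKG inequality \eqref{FKG} and comparison between boundary conditions \eqref{comparison}. Choosing $m\asymp n$, the uniform bound $\sum_{x}\phi(x\leftrightarrow\partial_{ba})\le C$ over $\asymp n$ boundary vertices forces $\phi^{a,b}_{R_{n,n},p_c,q}(x\leftrightarrow \partial_{ba})\le C/n$ for at least a positive fraction of $x$; more to the point, it forces the probability of a horizontal (or vertical) crossing of the box at $p_c$ to decay no faster than polynomially — a crossing would connect many boundary vertices to the wired arc. Using \eqref{comparison} to pass from Dobrushin to free boundary conditions, and FKG to glue together crossings of dyadic annuli around $0$, one deduces that the two-point function $\phi^0_{\mathbb Z^2,p_c,q}(0\leftrightarrow x)$ cannot decay faster than $|x|^{-\alpha}$ for some $\alpha=\alpha(q)>0$ determined by the constant $C$ and the number of annuli (roughly $\alpha\sim \log C/\log 2$). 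For $1\le q\le 3$ one can in fact run the same contour argument directly in growing boxes and sum over the scale to get that $\sum_x\phi^0_{p_c}(0\leftrightarrow x)=\infty$, which is Theorem~\ref{thm:susceptibility} and a fortiori the proposition; the weaker polynomial statement is what survives uniformly up to $q<4$.

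\medbreak
\noindent The main obstacle I anticipate is Step two's phase-control: showing that the contributions of the parafermionic observable along the \emph{sides} of the Dobrushin domain, where the winding is non-constant, add up constructively rather than destructively. Unlike the $q=2$ case where $\sigma=1/2$ makes the argument transparent, for general $\sigma\in(0,1)$ one must be careful that the rotating boundary phases, integrated against the (nonnegative) moduli, still project to something bounded away from zero in a well-chosen direction; the inequality $\sigma\le 1$ is precisely what makes the relevant trigonometric sum have a sign, and this is why the method breaks for $q>4$ (where $\sigma$ would be complex). A secondary technical point is controlling the bottom (wired-arc) term, which one bounds crudely by the number of bottom edges times $1$ but must then dominate by choosing the aspect ratio $m/n$ appropriately, or by iterating the estimate over a sequence of nested slits. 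I would expect the degenerate case $q=4$ ($\sigma=1$) to require the separate observable alluded to in the text, since there the generic construction becomes singular.
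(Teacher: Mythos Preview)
Your proposal has a genuine gap at both of its steps, and it misses the central construction the paper introduces for $3<q<4$.

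\medbreak
\noindent\textbf{The domain and the direction of the inequality.} With a full wired bottom side, the boundary values of $F$ on the free arc are crossing-type probabilities $\phi^{a,b}(x\leftrightarrow\text{bottom})$, not point-to-point connections from a fixed interior vertex; so the contour identity you write down never sees $\phi(0\leftrightarrow x)$ at all. The paper instead shrinks the wired arc to the \emph{single point} $\{0\}$, so that Proposition~\ref{boundary} gives $|F(e)|=\phi^0(0\leftrightarrow x)$ for every boundary medial edge, and the contour identity becomes $\sum_{x\in\partial}\delta_x\,\phi^0(0\leftrightarrow x)=1$ with $|\delta_x|\le C$ (the $1$ is the contribution of $x=0$ moved to the right-hand side). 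This is a \emph{lower} bound on the sum, not the upper bound $\sum\phi\le C$ you display. And your conversion in Step two is then backwards: if a crossing connected many top vertices to the wired arc, an upper bound on the sum would force crossings to be rare, not frequent; an upper bound on boundary connectivity cannot be bootstrapped into a lower bound on $\phi^0(0\leftrightarrow x)$ by FKG.

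\medbreak
\noindent\textbf{The universal cover is the missing idea for $3<q<4$.} Making the wired arc a point with $0$ effectively in the interior forces a slit domain $S_n$, and the coefficients $\delta_x$ along the slit have the good sign only when $\sigma\le\tfrac23$, i.e.\ $q\le3$ (this is the computation in Appendix~\ref{appendix8}). For $3<q<4$ the slit contribution has the wrong sign and the planar argument breaks. The paper's remedy (Proposition~\ref{prop:bo}) is to lift to the graph $\mathbb U$ on the universal cover of the punctured plane: there is no slit, the winding is unbounded, and the identity $\sum_{x\in\partial U_n}\delta_x\,\phi^0_{U_n}(0\leftrightarrow x)=1$ holds with $|\delta_x|\le C$ for all $q<4$. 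From this one reads off $\sum_x\phi^0_{U_n}(0\leftrightarrow x)\ge 1/C$ directly. Your ``phase-control'' paragraph correctly senses that something goes wrong with the signs for general $\sigma$, but a clever projection in a planar rectangle will not fix it; the geometric unwinding on $\mathbb U$ is what does. The remaining (substantial) work, carried out in Lemmata~\ref{crossing}--\ref{lem:crucial} and the final proof in Appendix~\ref{appendix:proof2}, is to transfer the estimate from $\mathbb U$ back to $\mathbb Z^2$: one shows connection probabilities on $\mathbb U$ decay in the floor index $x_3$, extracts a polynomial lower bound in a planar slit domain with Dobrushin conditions, and then shields the slit by dual crossings (built from self-duality and FKG) to pass to free boundary conditions in the plane. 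None of this transfer machinery appears in your outline.
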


Before proving Theorem~\ref{thm:susceptibility} and Proposition~\ref{prop:poldecay}, let us show how it implies Theorem~\ref{thm:correlation length}.
 \begin{proof}[Theorem~\ref{thm:correlation length}]
For every $n,m>0$, the FKG inequality \eqref{FKG} implies, 
\begin{align*}\phi^0_{\mathbb Z^2,p,q}((0,0)\longleftrightarrow (n+m,0))~&\ge~\phi^0_{\mathbb Z^2,p,q}((0,0)\longleftrightarrow (n,0)\text{ and }(n,0)\longleftrightarrow (n+m,0))\\
&\ge~\phi^0_{\mathbb Z^2,p,q}((0,0)\longleftrightarrow (n,0))\cdot\phi^0_{\mathbb Z^2,p,q}((0,0)\longleftrightarrow (m,0))\end{align*}
which implies (by supermultiplicativity) that
$$\phi_{\mathbb Z^2,p,q}^0((0,0)\longleftrightarrow (n,0))~\le ~{\rm e}^{-n/\xi(p)},$$
where $\xi(p)$ is the correlation length. If $\xi(p)$ does not converge to $\infty$ as $p\nearrow p_c$, it increases to $\xi=\sup_{p<p_c}\xi(p)>0$. We thus obtain
$$\phi_{\mathbb Z^2, p_c,q}^0((0,0)\longleftrightarrow (n,0))=\lim_{p\nearrow p_c}\phi^0_{\mathbb Z^2,p,q}((0,0)\longleftrightarrow (n,0))\le \lim_{p\nearrow p_c}{\rm e}^{-n/\xi(p)}={\rm e}^{-n/\xi}.$$ In particular, $\phi_{\mathbb Z^2, p_c,q}^0((0,0)\longleftrightarrow (n,0))$ converges exponentially fast to 0, which is in contradiction with the polynomial decay of correlations (see Theorem~\ref{thm:susceptibility} for $1\le q\le 3$ or Proposition~\ref{prop:poldecay} for $3<q\le 4$).
 \end{proof}

\subsection{Proof of Theorem~\ref{thm:susceptibility} in the case $1\le q\le 3$}

Let $S_n$ be the graph given by the vertex set $[-n,n]^2\setminus \{(k,0),k> 0\}$ and edges linking nearest neighbors. It corresponds to a slit subdomain of $[-n,n]^2$. Set $\partial_n=\partial S_n\setminus \partial [-n,n]^2$. 
\begin{proposition}\label{prop:boo}
Fix $0<q\le 3$. There exists $C>0$ such that for every $n$, \begin{equation}\label{eq:bo1}
\sum_{\partial S_n}\delta_x\,\phi_{S_n,p_c,q}^0(0\longleftrightarrow x)~=~1,
\end{equation}
where $|\delta_x|\le C$ for every $x\in \partial S_n$ and $\delta_x\le 0$ for any $x\in \partial_n$.
\end{proposition}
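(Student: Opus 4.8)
The plan is to transfer the statement to the parafermionic observable on a slit Dobrushin domain built from $S_n$, and then to read it off from the contour-integral property (Proposition~\ref{relation_around_a_vertex}) and the boundary values (Proposition~\ref{boundary}).

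First I would view $S_n$ as the Dobrushin domain $(S_n,a,b)$ in which $a$ and $b$ are the two medial edges at the mouth of the slit near the origin, so that the wired arc $\partial_{ba}$ reduces to the single vertex $0$ and the free arc $\partial_{ab}$ is all of $\partial S_n$. With this choice $\phi^{a,b}_{S_n,p_c,q}=\phi^0_{S_n,p_c,q}$, and ``$x\longleftrightarrow$ wired arc'' is exactly ``$x\longleftrightarrow 0$''. Let $F$ be the associated parafermionic observable at $p_c$, of spin $\sigma=\tfrac2\pi\arcsin(\sqrt q/2)$. Summing the local relation~\eqref{rel_vertex} over all interior medial vertices of $S_n^\diamond$, every medial edge with both endpoints interior is cancelled (it is an $NE$-edge at one endpoint with coefficient $-i$ and an $SW$-edge at the other with coefficient $+i$, and likewise for the $NW/SE$ pairs), so one is left with
\[
\sum_{e} c_e\,F(e)=0 ,
\]
where the sum is over the medial edges $e$ having exactly one interior endpoint and $c_e\in\{1,-1,i,-i\}$ is the coefficient carried by $e$ in the relation at that endpoint. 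This is the discrete statement, recalled just after~\eqref{rel_vertex}, that $\oint F=0$.

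Next I would insert the boundary values. For a medial edge $e$ bordering a free-arc vertex $x$, Proposition~\ref{boundary} gives $F(e)=e^{i\sigma W(e,e_b)}\,\phi^0_{S_n,p_c,q}(0\longleftrightarrow x)$ with $W(e,e_b)$ the deterministic winding of the medial boundary from $e$ to $e_b$; since this winding is locally constant and changes only at the corners of $[-n,n]^2$ (by $\pm\pi/2$) and at the slit tip (by a bounded amount), it takes only finitely many values as $x$ ranges over $\partial S_n$, independently of $n$. The $O(1)$ medial edges touching $0$, in particular $e_a$ and $e_b$, are treated apart: $e_a,e_b\in\gamma$ a.s., and since $S_n$ is simply connected and $\gamma$ is non-self-crossing the winding of $\gamma$ between any two of them is deterministic, so $F(e_b)=1$, $F(e_a)=e^{i\sigma W_a}$ with $W_a$ fixed, and they contribute a fixed complex constant. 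Grouping the surviving terms by the primal vertex they sit at, the identity becomes
\[
\sum_{x\in\partial S_n}\widetilde\delta_x\,\phi^0_{S_n,p_c,q}(0\longleftrightarrow x)=\kappa ,\qquad \widetilde\delta_x=\sum_{e\ni x}c_e\,e^{i\sigma W(e,e_b)} ,
\]
with $\kappa$ an explicit nonzero complex constant depending only on $q$. Taking real parts and dividing by $\operatorname{Re}\kappa$, which one checks to be strictly positive, gives~\eqref{eq:bo1} with $\delta_x=\operatorname{Re}(\widetilde\delta_x)/\operatorname{Re}\kappa$.

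It remains to establish the two properties of $\delta_x$. Boundedness is automatic: $|\widetilde\delta_x|\le\sum_{e\ni x}|c_e|\le 2$, hence $|\delta_x|\le C:=2/\operatorname{Re}\kappa$, a constant depending on $q$ but not on $n$. For the sign on $\partial_n$ one computes the winding $W(e,e_b)$ for the two medial edges bordering a slit vertex (or the vertex $0$): this is a fixed value plus the $\pi/2$-turn separating the two edges, and substituting into $\widetilde\delta_x$ and simplifying with $\sin(\sigma\pi/2)=\sqrt q/2$ one finds that $\operatorname{Re}\widetilde\delta_x$ equals, up to a strictly positive factor, $-\cos\!\big(c(q)\big)$ for an explicit phase $c(q)$ lying in $[-\pi/2,\pi/2]$ exactly when $\sigma\le\tfrac23$, i.e. when $q\le 3$; hence $\delta_x\le 0$ on $\partial_n$. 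I expect the two delicate points to be the telescoping bookkeeping of the third paragraph near the incomplete medial faces along the slit (identifying which edges survive, with which $c_e$, and matching them to Proposition~\ref{boundary}), and this final sign computation --- which is the one place where $q\le 3$ is genuinely used, and beyond which, for $q>3$, one is forced to work instead on the universal cover of the punctured plane.
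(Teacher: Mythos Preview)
Your approach is essentially the one in the paper: view $S_n$ as a Dobrushin domain with wired arc $\{0\}$, sum the local relations \eqref{rel_vertex} over interior medial vertices so that only boundary edges survive, plug in the boundary values from Proposition~\ref{boundary}, isolate the contribution of $x=0$ as the normalizing constant, and then check the sign on the slit by an explicit winding computation. The paper's detailed derivation (Appendix~\ref{appendix8}) differs only in bookkeeping: it rewrites \eqref{rel_vertex} as $\sum_{e\text{ out}}e^{-iW(e,e_b)}F(e)=\sum_{e\text{ in}}e^{-iW(e,e_b)}F(e)$ rather than tracking coefficients $c_e\in\{\pm1,\pm i\}$, normalizes so that the right-hand side equals $i$, and then takes \emph{imaginary} parts; the resulting explicit formula is
\[
\delta_x=\frac{\cos[(\sigma-1)\tfrac{3\pi}{2}]\sin[(\sigma-1)\tfrac{\pi}{4}]}{\sin[(\sigma-1)\tfrac{3\pi}{4}]}
\quad\text{for }x\in\partial_n,
\]
which is $\le 0$ precisely when $\sigma\le 2/3$, i.e.\ $q\le 3$, as you anticipated. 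Your sketch leaves two small points to verify (the nonvanishing of your normalizing constant $\operatorname{Re}\kappa$, and the actual phase computation on the slit), but these are exactly the computations carried out in the appendix and present no surprises.
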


\begin{proof}
Consider the FK percolation on $S_n$ with free boundary conditions. This model can be thought of as a FK percolation in a Dobrushin domain, where the wired arc is reduced to $\{0\}$. In such case, the exploration path $\gamma$ is the loop passing around 0. This loop corresponds to the boundary of the cluster of the origin. The parafermionic observable is defined in this domain as usual.
The equality \eqref{eq:bo1} is then the translation of the fact that the integral along the discrete contour composed of boundary medial edges is equal to 0. The facts that $\delta_x<0$ and $|\delta_x|\le C$ follow from a direct computation, which is provided in Appendix~\ref{appendix8}.
\end{proof}

We are now in a position to prove Theorem~\ref{thm:susceptibility} when $1\le q\le 3$.
\begin{proof}[Theorem~\ref{thm:susceptibility}]
 Fix $1\le q\le3$ and $p=p_c$. 
Equation \eqref{eq:bo1} can be restated as
$$\sum_{x\in \partial S_n\setminus\partial_n}~\delta_x\,\phi^0_{S_n,p_c,q}(0\longleftrightarrow x)=1-\sum_{x\in \partial_n}\delta_x\,\phi^0_{S_n,p_c,q}(0\longleftrightarrow x)\ge 1$$
since $\delta_x\le 0$ on $\partial_n$. Therefore, 
$$\sum_{x\in \partial S_n\setminus\partial_n}\phi^0_{S_n,p_c,q}(0\longleftrightarrow x)\ge\sum_{x\in \partial S_n\setminus\partial_n}\frac{\delta_x}{C}\,\phi^0_{S_n,p_c,q}(0\longleftrightarrow x)~\ge~\frac1C$$
where $C$ is defined in Proposition~\ref{prop:boo}. We find
\begin{align*}\sum_{x\in \mathbb Z^2}\phi_{\mathbb Z^2,p_c,q}^0(0\longleftrightarrow x)~&\ge~\sum_{n>0}\sum_{x\in \partial S_n\setminus\partial_n}\phi_{\mathbb Z^2,p_c,q}^0(0\longleftrightarrow x)\nonumber\\
&\ge~\sum_{n>0}\sum_{x\in \partial S_n\setminus\partial_n}\phi_{S_n,p_c,q}^0(0\longleftrightarrow x)\nonumber\ge~\sum_{n>0}\frac1C~=~\infty.\end{align*}
In the second inequality, we used the comparison between boundary conditions \eqref{comparison}. We also used the fact that $\partial S_n\setminus \partial_n\subset\partial[-n,n]^2$.
\end{proof}

\subsection{Proof of Proposition~\ref{prop:poldecay} in the case $3<q<4$}

A crucial feature of the previous proof is that $\delta_x\le 0$ for $x\in \partial_n$. This property allows to show that the sum of connectivity probabilities on $\partial S_n\setminus \partial_n$ is bounded from below. On $S_n$, this property is only true for $\sigma\ge \frac13$, i.e. $q\le 3$. For values of $q$ between 3 and 4, one needs to consider an enlarged domain $U_n$ in which the previous property is somehow still true. This domain is not planar anymore: it is a graph on the universal cover of the plane minus a point. 
The graph $\mathbb U$ is defined as follows (see Fig.~\ref{fig:U}): the vertex set is given by $\Z^3$ and the edge set by
 \begin{itemize}
 \item $[(x_1,x_2,x_3),(x_1,x_2+1,x_3)]$ for every $x_1,x_2,x_3\in \Z$,
 \item $[(x_1,x_2,x_3),(x_1+1,x_2,x_3)]$ for every $x_1,x_2,x_3\in \Z$ such that $x_1\neq 0$ or such that $x_1=0$ and $x_2\ge0$,
 \item $[(0,x_2,x_3),(1,x_2,x_3+1)]$ for every $x_2<0$ and $x_3\in \Z$.
 \end{itemize}
This graph has the shape of a spiral staircase and can be seen as a graph on the universal cover of $\mathbb R^2\setminus \{(1/2,-1/2)\}$. Its medial graph is defined similarly to the planar cases and is denoted by $\mathbb U^\diamond$. We also set $U_{n}=\{(x_1,x_2,x_3)\in\mathbb U:|x_1|,|x_2|\le n\}$ for every $n\ge1$.

\begin{figure}[t]
\begin{center}
\includegraphics[width=14cm]{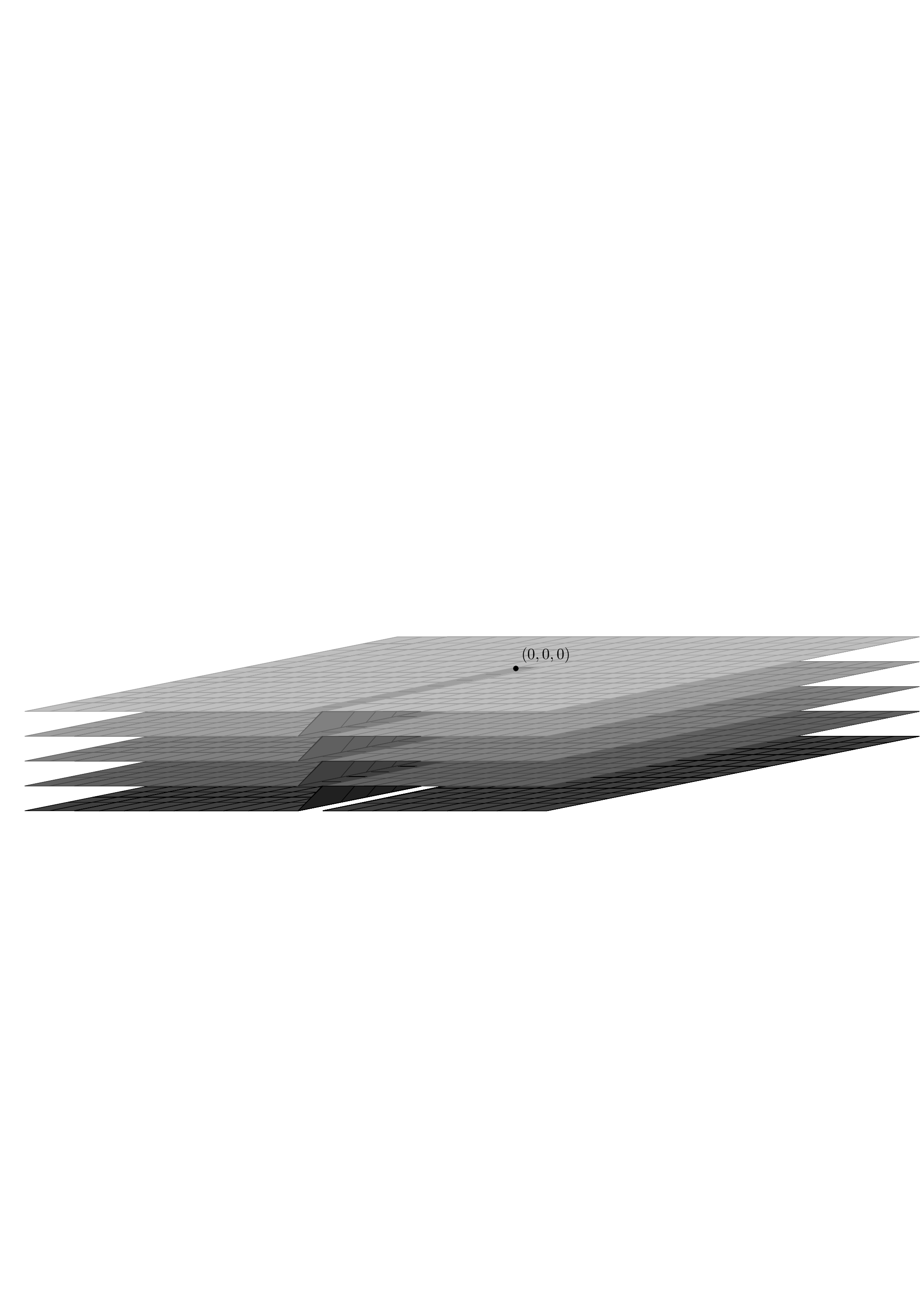}
\caption{\label{fig:U}The graph $\mathbb U$.}
\end{center}
\end{figure}

In this context, we obtain a proposition similar to Proposition~\ref{prop:boo}.

\begin{proposition}\label{prop:bo}
Fix $q< 4$. There exists $C>0$ such that for every $n$, \begin{equation}\label{eq:bo}
\sum_{\partial U_{n}}\delta_x\,\phi_{U_n,p_c,q}^0(0\longleftrightarrow x)~=~1,
\end{equation}
where $|\delta_x|\le C$ for every $x\in \partial U_n$.
\end{proposition}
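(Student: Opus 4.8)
The plan is to mimic the proof of Proposition~\ref{prop:boo}, but now on the universal-cover domain $U_n$, and to check that the sign obstruction that forced $q\le 3$ in the planar case disappears once we unwind the plane. First I would set up the parafermionic observable on the medial graph $\mathbb U^\diamond$ of the spiral staircase. The key point is that the local relation of Proposition~\ref{relation_around_a_vertex} is purely local: its proof only inspects one primal edge and the four medial edges around one medial vertex, together with the two configurations related by switching that edge. All of this lives in a small neighbourhood, so it transfers verbatim to $\mathbb U^\diamond$, and the winding $\mathrm{W}_\gamma(e,e_b)$ continues to make sense because on the universal cover the total rotation of the exploration loop around the puncture is a well-defined real number (not merely defined modulo $2\pi$). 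In particular the contour integral of $F$ along any discrete closed contour of $\mathbb U^\diamond$ that bounds a disc of faces still vanishes.

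Next I would run the same ``FK percolation in a degenerate Dobrushin domain'' trick as in Proposition~\ref{prop:boo}: take $U_n$ with free boundary conditions, view it as a Dobrushin domain whose wired arc is the single lifted vertex $0=(0,0,0)$, so the exploration path $\gamma$ is the loop surrounding the origin, i.e.\ the boundary of the cluster of $0$, lifted to the cover. The identity \eqref{eq:bo} is then exactly the statement that the discrete integral of $F$ along the boundary medial contour of $U_n$ equals $0$, rewritten using Proposition~\ref{boundary}: on a boundary medial edge $e$ adjacent to a boundary site $x$, $F(e)={\rm e}^{{\rm i}\sigma W(e,e_b)}\phi^0_{U_n,p_c,q}(x\longleftrightarrow 0)$, the winding $W(e,e_b)$ being deterministic. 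Grouping the boundary edges appropriately, summing the phases ${\rm e}^{{\rm i}\sigma W(e,e_b)}$ over the (at most four) medial edges incident to each boundary vertex, and normalising by the contribution of the edges adjacent to the origin produces coefficients $\delta_x$ with $\sum_{\partial U_n}\delta_x\,\phi^0_{U_n,p_c,q}(0\longleftrightarrow x)=1$. That each $\delta_x$ is a finite trigonometric expression in $\sigma$ bounded by an absolute constant $C=C(q)$ is the routine computation to be relegated to an appendix (the analogue of Appendix~\ref{appendix8}); the windings along the boundary of $U_n$ vary only by bounded amounts between consecutive vertices, which gives the uniform bound in $n$.

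The main obstacle — and the reason for passing to $\mathbb U$ in the first place — is the geometry of the outer boundary and its winding. On the slit square $S_n$ the boundary $\partial_n$ around the slit forces windings whose associated phase factors $\cos(\sigma\cdot(\text{winding}))$ stay non-positive only when $\sigma\ge 1/3$, i.e.\ $q\le 3$; for larger $q$ one cannot control the sign. On the spiral staircase the only ``inner'' boundary is the single origin vertex, and the rest of $\partial U_n$ is the outer boundary of the box $\{|x_1|,|x_2|\le n\}$ wrapped around the cover; the winding accumulated going around this outer boundary is a genuine $2\pi$ (times the number of sheets), not a $\pm$ ambiguous quantity, and this extra room is precisely what is needed so that no sign hypothesis on $\delta_x$ is required in the statement of Proposition~\ref{prop:bo} — we only claim $|\delta_x|\le C$. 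So the careful bookkeeping of windings on the staircase, and verifying that the degenerate-Dobrushin setup (wired arc $=\{0\}$, exploration path $=$ lifted cluster boundary) is legitimate on a non-planar graph, is where the real work lies; the algebra producing \eqref{eq:bo} from the vanishing contour integral is then the same identity as in the planar case.
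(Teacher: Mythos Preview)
Your approach matches the paper's: transport the degenerate Dobrushin setup (wired arc $=\{0\}$) to $U_n$, observe that the local relation of Proposition~\ref{relation_around_a_vertex} is purely local and hence holds on $\mathbb U^\diamond$, and read off \eqref{eq:bo} from the vanishing of the discrete contour integral together with Proposition~\ref{boundary}. The observation that only the bound $|\delta_x|\le C$ is asserted, with no sign condition, is also correct.

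However, there is a genuine gap. By definition $U_n=\{(x_1,x_2,x_3)\in\mathbb U:|x_1|,|x_2|\le n\}$ places no restriction on $x_3$, so $U_n$ is an \emph{infinite} graph and $\partial U_n$ is an infinite set. Consequently the ``boundary medial contour'' you integrate along is not a closed finite contour, the sum in \eqref{eq:bo} is an infinite series, and the telescoping argument (summing \eqref{rel_vertex} over interior medial vertices) involves infinitely many terms. None of this is automatic: one must first show that $F(e)$ and $\phi^0_{U_n,p_c,q}(0\longleftrightarrow x)$ decay to zero as $|x_3|\to\infty$, fast enough for the sum to converge and for the truncated contour integrals to have a limit. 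The paper does exactly this: it notes that for $0$ to be connected to a vertex with third coordinate $x_3>0$, none of the $n$ edges $[(y_1,0,y_3),(y_1+1,0,y_3)]$ on each intermediate sheet $y_3$ may all be closed, yielding $\phi^0_{U_n,p_c,q}(0\longleftrightarrow x)\le [1-(1-p)^n]^{|x_3|}$, and similarly for $F$. This exponential decay in $x_3$ is the missing ingredient in your argument; without it the passage from ``local relations hold'' to ``the boundary identity \eqref{eq:bo} holds'' is not justified.
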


\begin{proof}
The proof is roughly the same as  in Proposition~\ref{prop:boo}. The domain $U_n$ can be seen as an infinite Dobrushin domain, with wired arc $\{0\}$. In such case, $e_a$ and $e_b$ both correspond to the medial edge on $\partial U_n^\diamond$ adjacent to 0. By considering $e_a$ and $e_b$ as two different edges, the parafermionic observable can be defined similarly to the planar case. Furthermore, the local relations $$ F(NW)-F(SE) =i(F(NE)-F(SW))$$ is still valid since it only invokes the simple connectedness of $U_n$.

As before, \eqref{eq:bo} is then a consequence of the annulation of discrete contour integrals of this observable.
 
Note that the domain is infinite so that some additional care is required. Precisely, one needs to show that $F(e)$ and $\phi_{U_n,p_c,q}^0(0\longleftrightarrow x)$ go to 0 when $x$ and $e$ are taken far up or down compared to the origin. This comes from the following fact. If every edge of the form $[(y_1,0,y_3),(y_1+1,0,y_3)]$ for some fixed $y_3$ is closed, 0 cannot be connected to any $x$ with $x_3> y_3$. Since there are $n$ such edges, and that each one has a probability at least $1-p$ of being closed, we find that $\phi_{U_n,p_c,q}^0(0\longleftrightarrow x) \le [1-(1-p)^n]^{x_3}$. The same reasoning holds for the observable. 
\end{proof}

\medbreak
With the help of Proposition~\ref{prop:bo}, one can show Proposition~\ref{prop:poldecay}. The proof is slightly technical and we present it in Appendix~\ref{appendix:proof2}. The general philosophy is the same as in the previous section: integrating the observable on the boundary provides us with lower bounds on probability of being connected to the boundary of $U_n$ with free boundary conditions, which in turn imply that connectivity properties do not decay too fast. The additional difficulty comes from the fact that we originally work on $\mathbb U$ instead of $\mathbb Z^2$, and that we need to relate the behavior of FK percolation on $\mathbb U$ to its behavior on $\mathbb Z^2$. This is the reason for which we cannot prove  infinite susceptibility, but only polynomial decay of connectivity probabilities.

More generally, the relation between the behavior on $\mathbb U$ and $\mathbb Z^2$ is not clear, and a more systematic study should be performed.
%

\subsection{Proof of Proposition~\ref{prop:poldecay} in the case $q=4$}

When $q=4$, Smirnov's parafermionic observable becomes 
$F(e)={\rm e}^{{\rm i}W(e,e_b)}\phi_{G,p,4}^{a,b}(e\in \gamma)$.
Proposition~\ref{relation_around_a_vertex} then boils down to the fact that $\gamma$ enters and exists every vertex the same number of times. This fact is an easy implication of the fact that $\gamma$ is a curve and holds for every $p$. In particular, there is no hope for these relations to provide any insight on the phase transition. 

The reason for this loss of information is that we are not looking at the right observable. The observable becomes degenerated when $q\rightarrow 4$ and one should look at an expansion of the observable in powers of $(\sigma-1)$. Let us introduce
$$G(e)~:=~\phi_{G,p}^{a,b}[W_\gamma(e,e_b){\rm e}^{{\rm i}W_\gamma(e,e_b)}1_{e\in \gamma}].$$
\begin{proposition}
  \label{relation_around_a_vertex_4}
  Fix $q=4$ and $p=p_c(4)=2/3$. Consider a medial vertex $v$ in $G^\diamond$ with four incident medial edges, indexed in the obvious way. Then,  \begin{equation*}
    G(NW)-G(SE) ~=~i~[G(NE)-G(SW)].
  \end{equation*}
\end{proposition}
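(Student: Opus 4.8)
The plan is to mimic the proof of Proposition~\ref{relation_around_a_vertex} verbatim, carrying along the extra factor $W_\gamma(e,e_b)$ coming from the derivative in $\sigma$. Concretely, write $G(e)=\sum_\omega g_\omega$ with $g_\omega:=\phi_{G,p_c,4}^{a,b}(\omega)\,W_{\gamma(\omega)}(e,e_b)\,{\rm e}^{{\rm i}W_{\gamma(\omega)}(e,e_b)}1_{e\in\gamma(\omega)}$, and use the involution $s$ that flips the state of the primal edge at $v$ to pair configurations: $G(e)=\tfrac12\sum_\omega[g_\omega+g_{s(\omega)}]$. As before it suffices to check, for each fixed $\omega$,
\begin{equation*}
NW_\omega+NW_{s(\omega)}-SE_\omega-SE_{s(\omega)}=i\,[NE_\omega+NE_{s(\omega)}-SW_\omega-SW_{s(\omega)}],
\end{equation*}
where now $e_\omega$ denotes the contribution to $G(e)$. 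Cases C1 and C3 are unchanged (either all eight terms vanish, or C3 reduces to C2 after relabelling). The work is in case C2.

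In case C2 the relation between the weights of $\omega$ and $s(\omega)$ is still a factor $1/\sqrt q=1/2$, and the windings at the four edges still differ from the winding $W:=W_\omega(NW,e_b)$ at $NW$ by the deterministic offsets read off from the medial lattice geometry (e.g. the winding of $NE$ in $s(\omega)$ is $W-\pi/2$, etc.). The difference is that each entry of the table now acquires the form $({\rm offset}+W)\,{\rm e}^{{\rm i}({\rm offset}+W)}$ rather than ${\rm e}^{{\rm i}\sigma({\rm offset}+W)}$ evaluated at $\sigma=1$. So I would recompute the contribution table with base quantity $A:=NW_\omega={\rm e}^{{\rm i}W}$ (noting $\sigma=1$ for $q=4$) and, for instance, $SW_\omega=(W+\pi/2)\,{\rm e}^{{\rm i}(W+\pi/2)}$, and similarly the four entries on the $s(\omega)$ row, each divided by $\sqrt4=2$. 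Summing the required combination, the ${\rm e}^{{\rm i}W}$-type terms cancel exactly as in Proposition~\ref{relation_around_a_vertex} (this is just the $\sigma=1$ instance of that identity, using ${\rm e}^{{\rm i}\pi/2}-{\rm e}^{-{\rm i}\pi/2}=i\sqrt4=2i$), and the new terms are precisely $W$ times that same combination plus the derivative-of-offsets terms; both pieces vanish by the $\sigma=1$ version of \eqref{cd} and its formal $\sigma$-derivative. In other words, differentiating the identity \eqref{cd} (which holds as a function of $\sigma$ for $q=4$, $\sigma=1$) with respect to $\sigma$ at $\sigma=1$ gives exactly the desired relation for $G$, since $G(e)=-{\rm i}\,\partial_\sigma F_\sigma(e)|_{\sigma=1}$ where $F_\sigma$ is the spin-$\sigma$ observable.

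The cleanest write-up, which I would adopt, is this last remark: for $q=4$ one can formally define a one-parameter family $F_\sigma(e)=\phi_{G,p_c,4}^{a,b}({\rm e}^{{\rm i}\sigma W_\gamma(e,e_b)}1_{e\in\gamma})$, the proof of Proposition~\ref{relation_around_a_vertex} shows $F_\sigma(NW)-F_\sigma(SE)=i[F_\sigma(NE)-F_\sigma(SW)]$ for all $\sigma$ (the only place $q$ entered the algebra was through ${\rm e}^{{\rm i}\sigma\pi/2}-{\rm e}^{-{\rm i}\sigma\pi/2}=i\sqrt q$, which for $q=4$ reads $2\sin(\sigma\pi/2)=?$ — careful: this identity used $\sin(\sigma\pi/2)=\sqrt q/2$, which at $q=4$ forces $\sigma=1$, so one cannot vary $\sigma$ freely there). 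Hence the honest route is the direct table computation above, not a differentiation trick, and I would present it as a short explicit table exactly parallel to the one in Proposition~\ref{relation_around_a_vertex}, with entries of the form $(W+c)\,{\rm e}^{{\rm i}(W+c)}$ and a final line invoking ${\rm e}^{{\rm i}\pi/2}-{\rm e}^{-{\rm i}\pi/2}=2i$ together with its ``weighted by $W$'' and ``weighted by $\partial_\sigma$'' consequences.

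The main obstacle is purely bookkeeping: getting the eight winding offsets and the $\sqrt q=2$ weight factors right in the C2 table, and then checking that the combination collapses — there are no conceptual difficulties, since simple connectedness of $G$ (the only structural input, as stressed in the proof of Proposition~\ref{prop:bo}) is unchanged. I expect the verification to occupy one displayed table and two lines of algebra.
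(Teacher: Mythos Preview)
Your direct table computation is correct and will go through --- with $\sqrt q=2$ the eight $(W+c)\,e^{i(W+c)}$ entries in case C2 do collapse. The paper, however, takes a shorter route that you nearly found and then talked yourself out of. It decouples the winding exponent from the cluster weight: setting $F_{q,\eta}(e):=\phi_{G,p_c(q),q}^{a,b}\big(e^{i\eta W_\gamma(e,e_b)}1_{e\in\gamma}\big)$, for each fixed $q<4$ the four-edge relation holds at \emph{two} values of $\eta$, namely $\eta=\sigma(q)$ (by Proposition~\ref{relation_around_a_vertex}) and $\eta=1$ (trivially, since a curve enters and exits $v$ the same number of times --- this holds for any $p,q$). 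As $\sigma(q)\to 1$ when $q\nearrow 4$, Rolle's lemma (equivalently, the difference quotient) forces the $\eta$-derivative of the relation to vanish at $\eta=1$ in the limit, and that derivative is exactly $i$ times the relation for $G$. So the paper recycles Proposition~\ref{relation_around_a_vertex} with no new bookkeeping, at the price of a harmless finite-graph limit in $q$; your route is self-contained at $q=4$ but redoes the table. Your objection that ``one cannot vary $\sigma$ freely at $q=4$'' missed that varying $q$ supplies the second root $\eta=1$ for free --- and in fact even at fixed $q=4$ the algebraic defect in \eqref{cd} vanishes to higher order in $(\eta-1)$, so the straight differentiation you first proposed would also have succeeded.
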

 
\begin{proof} Set $F_{q,\eta}(e)= \phi_{G,p_c,q}^{a,b} ({\rm e}^{{\rm i}\eta W(e,e_b)}
    1_{e\in \gamma}) $. Observe that for any $q<4$,
        \begin{eqnarray*}
    &F_{q,\sigma(q)}(NW)-F_{q,\sigma(q)}(SE)&=~i[F_{q,\sigma(q)}(NE)-F_{q,\sigma(q)}(SW)]\\
    & F_{q,1}(NW)-F_{q,1}(SE)&=~i[F_{q,1}(NE)-F_{q,1}(SW)],
    \end{eqnarray*}
    where $\sigma(q)$ satisfies $\sin(\sigma(q)\frac\pi 2)=\sqrt q/2.$
 Indeed, the first relation is due to Proposition~\ref{relation_around_a_vertex}, and the second follows readily from the fact that $\gamma$ is a curve (it simply asserts that a curve entering through $NW$ or $SE$ exits through $NE$ or $SW$). Now, since $\sigma(q)$ tends to $1$ as $q\nearrow 4$, we obtain the claim by using Rolle's lemma.
\end{proof}

 The observable $G$ plays the same role as the parafermionic observables for other values of $q$. In particular, it should converge in the scaling limit, when properly normalized, to 
$\log \phi'$
where $\phi$ is any conformal map from $\Omega$ to $\mathbb R\times(0,1)$ sending $a$ to $-\infty$ and $b$ to $\infty$. As before, the annulation of discrete contour integrals for $G$ allows us to implement the program introduced in the case $q<4$ to prove Theorem~\ref{thm:correlation length}. It starts by an analogue of Proposition~\ref{prop:bo}, which follows from the same proof.

 \begin{proposition}\label{cor:bo}
There exists $C>0$ such that for every $n$, \begin{equation*}
\sum_{\partial U_{n}}\delta_x\, \phi_{U_n,p_c,4}^0(0\longleftrightarrow x)~=~1,
\end{equation*}
where $|\delta_x|\le C$ for every $x\in \partial U_n$.
\end{proposition}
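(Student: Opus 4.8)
The plan is to repeat the proof of Proposition~\ref{prop:bo} almost word for word, with the observable $F$ replaced by $G$ and the local relation of Proposition~\ref{relation_around_a_vertex} replaced by that of Proposition~\ref{relation_around_a_vertex_4}. The first step is to check that $G$ still satisfies $G(NW)-G(SE)=i[G(NE)-G(SW)]$ around every medial vertex of $U_n^\diamond$, at $q=4$ and $p=p_c(4)=2/3$. The proof of Proposition~\ref{relation_around_a_vertex_4} carries over to $U_n$: writing $F_{q,\eta}(e)=\phi_{U_n,p_c(q),q}^{a,b}({\rm e}^{{\rm i}\eta W_\gamma(e,e_b)}1_{e\in\gamma})$, the relation holds for $F_{q,\sigma(q)}$ by Proposition~\ref{relation_around_a_vertex} — valid on $U_n$ since its proof uses only simple connectedness, as already noted in the proof of Proposition~\ref{prop:bo} — and for $F_{q,1}$ because $\gamma$ is a curve. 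Applying Rolle's lemma to $\eta\mapsto[F_{q,\eta}(NW)-F_{q,\eta}(SE)]-i[F_{q,\eta}(NE)-F_{q,\eta}(SW)]$ on $[\sigma(q),1]$ produces, for each $q<4$, a parameter $\eta_q\in(\sigma(q),1)$ at which the observable $\partial_\eta F_{q,\eta}(e)=\phi_{U_n,p_c(q),q}^{a,b}({\rm i}W_\gamma(e,e_b){\rm e}^{{\rm i}\eta W_\gamma(e,e_b)}1_{e\in\gamma})$ satisfies the relation; since $\sigma(q)\to1$, also $\eta_q\to1$ and $p_c(q)\to2/3$, so passing to the limit $q\nearrow4$ gives the relation for $G$ (up to the irrelevant factor ${\rm i}$). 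The one point to watch, exactly as in Proposition~\ref{prop:bo}, is that $U_n$ is infinite; but the relevant sums are absolutely convergent and continuous in $(p,q)$ because $\phi^0_{U_n,p,q}(0\longleftrightarrow x)\le[1-(1-p)^n]^{|x_3|}$ decays exponentially in the number of levels (if the $n$ edges $[(y_1,0,y_3),(y_1+1,0,y_3)]$ are all closed, $0$ is cut off from everything past level $y_3$) while the winding of $\gamma$ around $0$ grows at most linearly in the number of levels the cluster of $0$ visits.

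Granting the local relation, the discrete contour integral of $G$ along any closed contour of $U_n^\diamond$ vanishes. I would apply this to the contour of boundary medial edges, understood as the $M\to\infty$ limit of the contour of $U_n\cap\{|x_3|\le M\}$, the two ``caps'' at levels $\pm M$ contributing a vanishing amount by the exponential bound above. Viewing $U_n$ as an infinite Dobrushin domain with wired arc $\{0\}$ — so that $\gamma$ is the loop tracing the boundary of the cluster of $0$ and $e_a,e_b$ are the two copies of the medial edge at $0$ — this gives a linear identity among the boundary values of $G$. For $x\in\partial U_n\setminus\{0\}$ on the free arc, with $e$ the incident boundary medial edge, the argument of Proposition~\ref{boundary} gives $\{x\longleftrightarrow 0\}=\{e\in\gamma\}$ and a deterministic winding $W(e,e_b)$ on this event, so that
$$G(e)~=~W(e,e_b)\,{\rm e}^{{\rm i}W(e,e_b)}\,\phi^0_{U_n,p_c,4}(0\longleftrightarrow x).$$
Substituting, grouping the contributions by $x$ (the coefficient of $\phi^0_{U_n,p_c,4}(0\longleftrightarrow x)$ being a fixed combination of the unit tangents $\pm1,\pm{\rm i}$ of the boundary edges at $x$ with the factors $W(e,e_b){\rm e}^{{\rm i}W(e,e_b)}$), and isolating the non-vanishing source term coming from the edges at $0$ — where $\phi^0_{U_n,p_c,4}(0\longleftrightarrow 0)=1$ — one divides by this source term to obtain $\sum_{\partial U_n}\delta_x\,\phi^0_{U_n,p_c,4}(0\longleftrightarrow x)=1$.

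It remains to bound the coefficients $\delta_x$, which is the analogue of the elementary computation deferred to Appendix~\ref{appendix8} in the planar case and is where most of the work sits. Since $q=4$ means $\sigma=1$, and since on $\partial U_n$ the winding at level $x_3$ equals $2\pi x_3$ plus a bounded local term, one has ${\rm e}^{{\rm i}W(e,e_b)}={\rm e}^{{\rm i}(\text{bounded})}$; a direct inspection of the cancellations between the boundary medial edges incident to a given $x$ then yields $|\delta_x|\le C$ on the portion of $\partial U_n$ of bounded winding, while on the remainder the at most linear growth of $W(e,e_b)$ against the exponential decay $\phi^0_{U_n,p_c,4}(0\longleftrightarrow x)\le[1-(1-p_c)^n]^{|x_3|}$ keeps every contribution summable and under control (this is the ``additional care'' already present in Proposition~\ref{prop:bo}). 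The main obstacle is therefore not conceptual — the algebraic core, namely the local relation for $G$ on $U_n$, is immediate from Proposition~\ref{relation_around_a_vertex_4} — but lies in this bookkeeping for $\delta_x$ and in making the infinite-volume limit on the spiral $\mathbb U$ rigorous.
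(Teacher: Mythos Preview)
Your approach is exactly the paper's: the proof of Proposition~\ref{cor:bo} is declared to ``follow from the same proof'' as Proposition~\ref{prop:bo}, with $G$ replacing $F$ and Proposition~\ref{relation_around_a_vertex_4} replacing Proposition~\ref{relation_around_a_vertex}. Your treatment of the local relation on $U_n$, the boundary values of $G$, and the infinite-volume truncation are all in line with this.

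There is, however, one point at the end where you have not quite closed the argument and where your phrasing suggests a misconception. You say $|\delta_x|\le C$ holds ``on the portion of $\partial U_n$ of bounded winding'', and that elsewhere ``the at most linear growth of $W(e,e_b)$'' is to be absorbed by the exponential decay of $\phi^0_{U_n,p_c,4}(0\longleftrightarrow x)$. That would prove only that the series converges, not the uniform bound $|\delta_x|\le C$ that the proposition actually claims. In fact the uniform bound does hold, and for a simple reason you have almost written down. Running the computation of Appendix~\ref{appendix8} with $G$ in place of $F$, one has on the boundary $e^{-iW(e,e_b)}G(e)=W(e,e_b)\,\phi^0_{U_n,p_c,4}(0\longleftrightarrow x)$, so the coefficient of $\phi^0_{U_n,p_c,4}(0\longleftrightarrow x)$ in the contour identity is, up to normalisation by the source term at $0$, the \emph{difference} $W(e_{\rm out}(x),e_b)-W(e_{\rm in}(x),e_b)$. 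This is the local turn of the boundary at $x$, a bounded quantity (of order $\pi/2$) independent of the level $x_3$. The linear growth of $W(e,e_b)$ itself is irrelevant: it cancels between the entering and exiting edges at each boundary vertex. The exponential decay in $|x_3|$ is needed only to pass from finite truncations to the full boundary of $U_n$, exactly as in Proposition~\ref{prop:bo}.
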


The proof of Proposition~\ref{prop:poldecay} is then identical to the case $q<4$. 
\section{Open questions}\label{open questions}

In conclusion, we discussed the existence of parafermionic observables in planar FK percolation on the square lattice. These observables have been introduced by Smirnov. Their integrals along discrete contours vanish, which enables us to 
\begin{enumerate}
\item[(a)] Predict the behavior in the scaling limit (this observation is due to Smirnov).

\item[(b)] Provide non-trivial information on the critical phase.
\end{enumerate}
Observables of the same type have been found in many other contexts. Furthermore, employing them to understand the model has been a fruitful strategy.
Let us conclude this article with some open questions (others than Conjectures 1 and 2).
\medbreak
\noindent 1. What information can be extracted from these observables in other models?
\medbreak
\noindent 2. How can we find systematically observables with vanishing discrete contour integrals? We already know parafermionic observables in the FK percolation, loop $O(n)$ models on hexagonal and square lattices, $Z_N$ models. They are one of the simplest examples of observables having this property, however they are not necessary the only one. 
\medbreak
\noindent 3. Exploring the relation between FK percolation on the plane or on the universal cover of the punctured plane is an interesting problem. It could appear to be useful when studying winding problems, in particular for the self-avoiding walk model.
\medbreak
\noindent 4. Probabilistic definitions of second order phase transitions are slightly different from Ehrenfest's one or the divergence of the correlation length. It usually involves uniqueness of infinite-volume measures with parameters $(p_c,q)$.
Even though different notions of the order of a phase transition are predicted to be the same, this equivalence has not been established in the case of FK percolation with general cluster weight. We therefore leave as an open problem to show that there is a unique FK percolation infinite-volume measure with parameter $(p_c,q)$, when $q\le4$.

 From classical arguments \cite[Theorem (5.33)]{Gri06}, it is sufficient to prove that there is no infinite cluster almost surely for the infinite-volume measure with wired boundary conditions denoted $\phi_{\mathbb Z^2,p_c,q}^1$. In the case of percolation, an argument of Russo \cite{Rus78} shows that the divergence of the susceptibility is equivalent to the absence of an infinite cluster in the dual. For $1<q\le3$, the mean-size of the cluster at the origin under $\phi^0_{\mathbb Z^2,p_c,q}$ was shown to be infinite in Theorem~\ref{thm:susceptibility}, which should be an indicator of the absence of a dual cluster. Since the dual model is a FK percolation at criticality with wired boundary conditions, the result would follow if Russo's argument could be extended to general FK percolations. Even though the argument seems fairly rigid, we were unable to generalize it.
 
Note that in the other direction, uniqueness of the infinite measure at criticality is sufficient to show that the transition is of second order. Indeed, it is classical that $\sum_{x\in \mathbb Z^2}\phi_{\mathbb Z^2,p_c,q}^1(0\leftrightarrow x)=\infty$, and the uniqueness implies directly that $\sum_{x\in \mathbb Z^2}\phi_{\mathbb Z^2,p_c,q}^0(0\leftrightarrow x)=\infty$.
\medbreak
\noindent 5. Parafermionic observables can be defined for $q>4$, see \cite{BDCS11}. In such case, the spin $\sigma$ is a complex number which is not real. It does not have any immediate physical relevance. Nevertheless, it is still possible to obtain relations comparable to \eqref{rel_vertex}. It would be interesting to relate the change of behavior of $\sigma$ to the change of behavior of the critical FK percolation (for $q>4$, it undergoes a first order phase transition).
\medbreak
\noindent 6. Using as an inspiration the works in \cite{BDCS11,Bax78}, it would also be interesting to extend our results to any isoradial graphs. The parafermionic observable is available there, and one should be able to make the proof work, with a substantial amount of new difficulties.

\appendix
\section{Appendix}

\subsection{Detailed derivation of Proposition~\ref{prop:boo}}\label{appendix8}

Fix $q< 3$, $p=p_c$ and drop them from the notation. Let $V=S_n^\diamond\setminus \partial S_n^\diamond$ be the set of medial vertices of $S_n^\diamond$ with four incident medial edges. For $v\in V$, the relation \eqref{rel_vertex} can be restated as
\begin{equation*}\sum_{e\text{ exiting } v}{\rm e}^{-{\rm i}W(e,e_b)}F(e)-\sum_{e\text{ entering } v}{\rm e}^{-{\rm i}W(e,e_b)}F(e)~=~0,\end{equation*}
where a medial edge incident to $v$ is entering $v$ if it is pointing toward $v$, and exiting otherwise. Summing the previous identity over all medial vertices in $V$, edges with two endpoints in $V$ disappear (since they are pointing towards one vertex of $V$, and outwards one of them). We obtain that
\begin{equation}\label{relll}\sum_{e\text{ exiting } V}{\rm e}^{-{\rm i}W(e,e_b)}F(e)-\sum_{e\text{ entering } V}{\rm e}^{-{\rm i}W(e,e_b)}F(e)~=~0,\end{equation}
where an edge enters $V$ if it is pointing toward a medial vertex of $V$ and away from a medial vertex of $V^c$, and it exits $V$ if it is pointing toward a medial vertex of $V^c$ and away from a medial vertex of $V$. 

Note that any edge entering or exiting $V$ is on the boundary.
Proposition~\ref{boundary} shows that for $e$ on the boundary, 
$$F(e)~=~{\rm e}^{{\rm i}\sigma W(e,e_b)}\phi_{S_n}^0(0\longleftrightarrow x)$$
where $x$ is the site bordered by $e$. Thus, \eqref{relll} implies
\begin{equation}\label{abcde}\sum_{x\in \partial S_n}\Big({\rm e}^{{\rm i}(\sigma-1) W(e_{\rm out}(x),e_b)}-{\rm e}^{{\rm i}(\sigma-1) W(e_{\rm in}(x),e_b)}\Big)\phi_{S_n}^0(0\longleftrightarrow x)=0,\end{equation}
where $e_{\rm in}(x)$ is the only medial edge bordering the face corresponding to $x$ and entering $V$, and $e_{\rm out}(x)$ is the only medial edge bordering the face corresponding to $x$ and exiting $V$.

Now, if $x=0$, we get that $e_{\rm in}(0)=e_a$ and $e_{\rm out}(0)=e_b$, and the associated  windings are $3\pi/2$ and $0$. The constant is therefore equal to $1-{\rm e}^{{\rm i}(\sigma-1)3\pi/2}=-2i\sin[(\sigma-1)\frac{3\pi}4]{\rm e}^{{\rm i}(\sigma-1)\frac{3\pi}4}$. By putting the contribution due to $x=0$ on the other side of the equal sign, and dividing by $2\sin[(\sigma-1)\frac{3\pi}4]{\rm e}^{{\rm i}(\sigma-1)\frac{3\pi}4}$, we find
\begin{equation}\label{abcdef}\sum_{x\in \partial S_n:x\ne 0}\Big(\frac{{\rm e}^{{\rm i}(\sigma-1) W(e_{\rm out}(x),e_b)}-{\rm e}^{{\rm i}(\sigma-1) W(e_{\rm in}(x),e_b)}}{2\sin[(\sigma-1)\frac{3\pi}4]{\rm e}^{{\rm i}(\sigma-1)\frac{3\pi}4}}\Big)\phi_{S_n}^0(0\longleftrightarrow x)=i.\end{equation}
Define for $x\ne 0$
\begin{align*}&\delta_x=\frac{1}{2\sin[(\sigma-1)\frac{3\pi}4]}\Im{\rm m}\Big({\rm e}^{{\rm i}(\sigma-1) (W(e_{\rm out}(x),e_b)-\frac{3\pi}4)}-{\rm e}^{{\rm i}(\sigma-1) (W(e_{\rm in}(x),e_b)-\frac{3\pi}4)}\Big)\\
&=\frac{\cos\Big[(\sigma-1)\big(\frac{W(e_{\rm out}(x),e_b)+W(e_{\rm in}(x),e_b)}{2}-\frac{3\pi}4\big)\Big]\sin\Big[(\sigma-1)(\frac{W(e_{\rm out}(x),e_b)-W(e_{\rm in}(x),e_b)}2)\Big]}{\sin[(\sigma-1)\frac{3\pi}4]}.\end{align*}
Obviously, $\delta_x$ has modulus smaller than $C:=1/\sin[(1-\sigma)\frac{3\pi}4)]<\infty$. Furthermore, if
$x\in \partial_n$, the entering edge has winding $2\pi$ (or $-\pi$ depending on which side of the slit it is) and the exiting edge has winding $5\pi/2$ (resp. $-\pi/2$). In both cases, the constant is equal to 
\begin{align*}\delta_x&=\frac{\cos[(\sigma-1)\frac{3\pi}2]\sin[(\sigma-1)\frac{\pi}{4}]}{\sin[(\sigma-1)\frac{3\pi}4]}.\end{align*}
This quantity is smaller than $0$ since $\frac{1}{3}\le \sigma< 1$.

\subsection{Proof of Proposition~\ref{prop:poldecay} in the case $1<q<4$}\label{appendix:proof2}

Fix $1<q<4$, $p=p_c$ and drop them from the notation.

The proof runs along the following lines. The main ingredient is once again \eqref{eq:bo}, which allows to show that there exists $x$ on the boundary of $U_n$ which is connected to the origin with good probability, even with free boundary conditions. The additional difficulty comes from the fact that we need to bootstrap this information to free boundary conditions on the plane. This part of the proof is technical and consists in playing with boundary conditions. We include it for completeness. The two first lemmas are not based on the observable and are valid for any $q\ge 1$.

\begin{lemma}\label{crossing}
For any $n\ge 1$, the probability that there exists a crossing from top to bottom in a square with wired boundary conditions on left and right, and free elsewhere, is larger than 1/2.
\end{lemma}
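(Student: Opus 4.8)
\textbf{Proof plan for Lemma~\ref{crossing}.}

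The plan is to reduce the statement to a classical self-duality argument. Fix a square, say the box $[0,n]\times[0,n]$, and impose wired boundary conditions on the left and right sides and free boundary conditions on the top and bottom sides. First I would set up the dual picture: under the duality $\omega\mapsto\omega^*$ at $p=p_c$, which is self-dual ($p_c^*=p_c$), wired boundary segments become free dual boundary segments and vice versa. The event $A$ that there is a top-to-bottom open crossing of the square is, by planar duality in a rectangle, \emph{complementary} to the event $A^*$ that there is a left-to-right crossing in the dual graph using dual-open edges; and the mixed boundary conditions are arranged precisely so that $A^*$ in the dual is the image of a top-to-bottom crossing event under a $\pi/2$ rotation of the same mixed-boundary-condition configuration. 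The key step is to observe that this rotational symmetry, combined with self-duality at $p_c$, gives $\phi(A)=\phi^*(\text{no dual left-right crossing})=1-\phi^*(A^*)=1-\phi(A)$ after identifying the rotated configuration with the original one, whence $\phi(A)=1/2$.

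More carefully, the steps in order are: (1) state the exact combinatorial duality fact that in the square with this boundary data, the complement of ``open top-bottom crossing'' is exactly ``dual-open left-right crossing'' — here one must be slightly careful about how the wired arcs are treated in the dual and check that no parity or corner issue spoils the exact complementarity; (2) use $p_c=p_c^*$ and $q=q^*$ to conclude the dual measure is an FK measure with the same parameters; (3) use the $\pi/2$-rotation symmetry of the square together with the fact that rotating swaps the ``wired on left/right, free on top/bottom'' data into ``wired on top/bottom, free on left/right'' — and then note that a dual left-right crossing in this rotated setup has the same probability as a primal top-bottom crossing in the original setup; (4) combine to get $\phi(A)+\phi(A)=1$, i.e. $\phi(A)=1/2$, and in particular $\phi(A)\ge 1/2$.

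The main obstacle I expect is step (1): getting the self-duality bookkeeping \emph{exactly} right so that the two crossing events are genuine complements with no error term. One has to decide precisely which dual vertices and dual edges lie inside the region, how the wired arcs on the left and right collapse (they should become single dual vertices, or equivalently free arcs, in the dual), and confirm that a configuration has an open top-bottom primal crossing if and only if it has no dual-open left-right crossing — this is the standard ``square-lattice rectangle duality'' but the mixed boundary conditions require care. Once that exact complementarity is in hand, everything else is the routine self-duality-plus-symmetry argument, and the inequality $\ge 1/2$ (rather than equality) is all that is needed downstream, which gives a little slack if the bookkeeping produces a harmless one-vertex discrepancy.
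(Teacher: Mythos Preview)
Your plan is the same self-duality argument the paper uses, and you have correctly located the one genuine subtlety. The only point where your write-up diverges from the paper is that you aim for $\phi(A)=1/2$ directly on the square $[0,n]^2$; in the paper's conventions this does not come out exactly, because the dual graph of the square (with these mixed boundary conditions) is not a rotated copy of the square --- it is off by one row. The paper handles precisely the ``one-vertex discrepancy'' you anticipated: it runs the duality/rotation argument on the rectangle $[0,n]\times[0,n+1]$, whose dual with swapped boundary conditions \emph{is} a rotated copy of itself, obtains probability exactly $1/2$ there, and then observes that a top--bottom crossing of the shorter square $[0,n]^2$ is at least as likely. So your steps (1)--(3) are right once carried out on $[0,n]\times[0,n+1]$, and step (4) becomes ``$=1/2$ on the rectangle, hence $\ge 1/2$ on the square by monotonicity''.
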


\begin{proof}
This is a simple consequence of self-duality. Observe that the complement of the event that there is an open path crossing from top to bottom in $[0,n]\times[0,n+1]$ is the event that there exists a dual open path from left to right in the dual graph. The dual measure is the measure with dual wired boundary conditions on left and right and free elsewhere on the dual graph, which is a rotated version of $[0,n]\times[0,n+1]$. Therefore, the probability of the complement event is the same as the probability of the event, i.e 1/2. We conclude by saying that the probability of having an open path crossing the square $[0,n]^2$ from top to bottom is larger than the one in $[0,n]\times[0,n+1]$.
\end{proof}

For $n,m\ge 1$, define $R(n,m)=[-n,n]\times[0,m]$. We also set $R_x(n,m)=x+R(n,m)$. For a rectangle $R$, let $
\partial_{*}R$ be the union of its top, left and right boundaries. Let $\phi^{\rm dobr}_{R(n,m)}$ be the FK measure on $R(n,m)$ with wired boundary condition on $\partial_*R(m,n)$ and free elsewhere. \begin{lemma}\label{estimate rectangle}
For any $n\ge 0$,
$$\phi^{\rm dobr}_{R(4n,n)}\Big(\textstyle (0,0)\longleftrightarrow \partial_{*} R(\frac n{16},\frac n4)\displaystyle\Big)\ge\frac{1}{16n^3}.$$
\end{lemma}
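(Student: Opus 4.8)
The plan is to reduce the statement to a lower bound of order $n^{-3}$ on the probability that the origin is connected to the wired arc $\partial_*R(4n,n)$, and then to obtain that bound by combining the self-duality argument behind Lemma~\ref{crossing} with the FKG inequality~\eqref{FKG} and the comparison between boundary conditions~\eqref{comparison}. The reduction is purely geometric: $R(n/16,n/4)$ is contained in $R(4n,n)$, and the only portion of $\partial R(n/16,n/4)$ lying on $\partial R(4n,n)$ is its bottom side, which is part of the free bottom side of $R(4n,n)$; hence an open path from $(0,0)$ to $\partial_*R(4n,n)$ must leave $R(n/16,n/4)$, and it can only do so through $\partial_*R(n/16,n/4)$ (it cannot exit through the bottom, since there are no vertices of $R(4n,n)$ below it). Therefore
$$\big\{(0,0)\longleftrightarrow\partial_* R(4n,n)\big\}\ \subseteq\ \big\{(0,0)\longleftrightarrow\partial_* R(n/16,n/4)\big\},$$
and it suffices to bound $\phi^{\rm dobr}_{R(4n,n)}\big((0,0)\longleftrightarrow\partial_* R(4n,n)\big)$ from below by $1/(16n^3)$.

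The next step is to produce one vertex of the bottom side that is connected to the wired arc with probability of order $1/n$. A crossing estimate --- following the self-duality computation in the proof of Lemma~\ref{crossing}, together with~\eqref{comparison} to bring the relevant (near-)square boundary conditions into a usable form --- shows that, with probability bounded below by a constant, there is a top-to-bottom open crossing of $R(4n,n)$; its bottom endpoint $(k,0)$, for some $|k|\le 4n$, is then connected to the wired arc $\partial_*R(4n,n)$. Summing the probabilities $\phi^{\rm dobr}_{R(4n,n)}\big((k,0)\longleftrightarrow\partial_*R(4n,n)\big)$ over the at most $8n+1$ values of $k$ and using the reflection symmetry $x\mapsto -x$ of the domain, there exists $k_0\in\{0,\dots,4n\}$ with
$$\phi^{\rm dobr}_{R(4n,n)}\big((k_0,0)\longleftrightarrow\partial_* R(4n,n)\big)\ \geq\ \frac{c}{n}.$$

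It remains to ``slide'' this connection back to the origin. The events $\{(0,0)\longleftrightarrow(k_0,0)\}$ and $\{(k_0,0)\longleftrightarrow\partial_* R(4n,n)\}$ are increasing and their intersection is contained in $\{(0,0)\longleftrightarrow\partial_* R(4n,n)\}$, so~\eqref{FKG} gives
$$\phi^{\rm dobr}_{R(4n,n)}\big((0,0)\longleftrightarrow\partial_* R(4n,n)\big)\ \geq\ \frac{c}{n}\cdot\phi^{\rm dobr}_{R(4n,n)}\big((0,0)\longleftrightarrow(k_0,0)\big),$$
so the whole estimate reduces to $\phi^{\rm dobr}_{R(4n,n)}\big((0,0)\longleftrightarrow(k_0,0)\big)\ge c'/n^2$. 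I would obtain this by joining the two bottom vertices by an open path that bulges up into $R(4n,n)$: choosing a suitable (near-)square box that contains both $(0,0)$ and $(k_0,0)$ on its lower side, one applies again the crossing estimate coming from Lemma~\ref{crossing}, and then~\eqref{FKG} and~\eqref{comparison} to cross that box and attach the two vertices to the crossing. Assembling the three estimates and keeping track of the constants yields the announced bound $1/(16n^3)$.

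The genuinely delicate point is this last step. Without a Russo--Seymour--Welsh theory, crossing probabilities of rectangles of arbitrary aspect ratio are \emph{not} bounded away from $0$, so the connection $(0,0)\longleftrightarrow(k_0,0)$ --- and, more basically, the connection of $(0,0)$ to the wired arc --- cannot be produced with a probability that stays bounded below: one must route these connections through intermediate domains whose boundary conditions are brought into a favorable form only at the price of~\eqref{comparison}, and this is paid in powers of $n$. This is the source of the exponent $3$: one factor $n$ comes from the pigeonhole over the $\sim n$ vertices of the bottom side, and the remaining two from the crossing-and-chaining used to reach the origin.
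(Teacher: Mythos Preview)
Your geometric reduction --- that an open path from $(0,0)$ to $\partial_*R(4n,n)$ must exit $R(n/16,n/4)$ through $\partial_*R(n/16,n/4)$ --- is correct and is in fact the last step of the paper's argument as well. The genuine gap is your sliding step. You need
\[
\phi^{\rm dobr}_{R(4n,n)}\big((0,0)\longleftrightarrow(k_0,0)\big)\ \geq\ \frac{c'}{n^{2}},
\]
and you propose to obtain it from Lemma~\ref{crossing} together with~\eqref{FKG} and~\eqref{comparison}. This does not go through. Both endpoints lie on the \emph{free} arc, while Lemma~\ref{crossing} only produces a crossing of a square when two opposite sides carry \emph{wired} boundary conditions. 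For any box $B$ sitting on the bottom of $R(4n,n)$ with $(0,0)$ and $(k_0,0)$ on its lower side, the bottom of $B$ is free and the left/right sides of $B$ lie in the bulk; the comparison~\eqref{comparison} lets you push the induced boundary conditions \emph{down} towards free (useful for upper bounds) but never \emph{up} towards wired, which is what a lower bound via Lemma~\ref{crossing} would require. Even granting a crossing of $B$, ``attaching the two vertices to the crossing'' is the same problem again: hitting a prescribed vertex of the free arc. Connecting two fixed points on the free arc is precisely the content of Lemma~\ref{long crossing}, whose proof in the paper \emph{uses} Lemma~\ref{estimate rectangle}; your scheme is therefore circular. (A secondary issue: your step~2 is also not immediate from Lemma~\ref{crossing}, since in $R(4n,n)$ the wired sides are top/left/right rather than two opposite sides of a square, so the self-duality symmetry you invoke is not available.)

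The paper sidesteps the sliding problem by changing the ambient domain. It works in the infinite strip $\mathbb Z\times[0,2n]$ with wired top and free bottom, where a genuine self-duality argument gives, with probability at least $1/2$, an open path from $\{0\}\times[0,n]$ to the top. Following the \emph{lowest} such path, each local minimum $x$ realises the event $A(x)$: $x\longleftrightarrow\partial_*R_x(4n,n)$ \emph{and} $x\not\longleftrightarrow\partial_-R_x(4n,n)$. A pigeonhole over the $\sim 8n^{3}$ vertices of $R(4n^{2},n)$ then yields some $x$ with $\phi_{\mathbb S_n}^{\rm dobr}(A(x))\ge 1/(16n^{3})$ --- so all three powers of $n$ come from the pigeonhole, none from chaining. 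The ``not connected below'' clause is the key device you are missing: conditioning on the lowest dual path below $x$, the Domain Markov property and~\eqref{comparison} bound the remaining connection probability above by $\phi^{\rm dobr}_{R_x(4n,n)}(x\longleftrightarrow\partial_*R_x(4n,n))$, and translation invariance carries $x$ to the origin without ever having to connect two prescribed free-arc vertices.
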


\begin{proof}
Consider the strip $\mathbb S_n=\mathbb Z\times[0,2n]$ of height $2n$. We fix wired boundary conditions on the top and free boundary conditions on the bottom.  Let $\mathcal E$ be the event that there exists an open path from $\{0\}\times[0,n]$ to the top of the strip. The complement of this event is contained in the event that there exists a dual open crossing from the dual arc $\{\frac12\}\times[n+\frac12,2n+\frac12]$ to the bottom of the strip. By symmetry and a self-duality argument similar to the proof of the previous lemma, we deduce that the probability of $\mathcal E$ is larger or equal to $1/2$.

Next, we claim that on $\mathcal E$, there exists $x\in R(4n^2,n)$ such that $x$ is connected to $\partial_*R_x(4n,n)$ and $x$ is not connected to $x+[-4n,4n]\times\{-1\}$, which we denote by $\partial_-R_x(4n,n)$ (this is the segment just below the bottom of $R_x(4n,n)$). We denote this event by $A(x)$. One can see that by looking at the lowest path from $\{0\}\times[0,n]$ to the top, and by studying its local minima. Therefore
\begin{align*}
\sum_{x\in R(4n^2,n)}\phi_{\mathbb S_n}^{\rm dobr}(A(x))&\ge \phi_{\mathbb S_n}^{\rm dobr}(\exists x\in R(4n^2,n):A(x))\ge \phi_{\mathbb S_n}^{\rm dobr}(\mathcal E)\ge \frac12,\end{align*}
where $\phi_{\mathbb S_n}^{\rm dobr}$ is the FK measure with wired boundary condition on the top, and free on the bottom.

Next, we aim to prove that $\phi_{\mathbb S_n}^{\rm dobr}(A(x))\le \phi^{\rm dobr}_{R(4n,n)}(\textstyle 0\longleftrightarrow \partial_{*} R(\frac{n}{16},\frac n4))$. This will imply the result immediately. 
We will be using another feature of the FK percolation, called the domain Markov property \cite[Lemma (4.13)]{Gri06}. In words, conditioned on the state of the edges outside of some graph $G$, the measure inside $G$ is a FK percolation with boundary conditions inherited from wiring induced by open edges outside $G$. This is the equivalent of the DLR property for Gibbs measures.

The event $A(x)$ is the intersection of the event that $x$ is connected to $\partial_*R_x(4n,n)$, and the event that it is not connected to $\partial_-R_x(4n,n)$. Conditioning on this second event boils down to conditioning on the lowest dual-open path, denoted $\Gamma$, disconnecting $x$ from $\partial_-R_x(4n,n)$ in $R_x(4n,n)$, see Fig.~\ref{fig:case0}. Conditionally on $\Gamma$, there must exist a path in the sites above it connecting $x$ to $\partial_*R_x(4n,n)$ in order for $A(x)$ to be verified. Let $S$ be the set of sites in $R_x(4n,n)$ above $\Gamma$. We deduce that
\begin{align*}&\phi_{\mathbb S_n}^{\rm dobr}(x\longleftrightarrow \partial_*R_x(4n,n)|\Gamma)=\phi_S^{\rm \xi}(x\longleftrightarrow \partial_*R_x(4n,n))\\
&\le \phi_{R_x(4n,n)}^{\rm dobr}(x\longleftrightarrow \partial_*R_x(4n,n)\text{ in S})\le\phi_{R_x(4n,n)}^{\rm dobr}(x\longleftrightarrow \partial_*R_x(4n,n)).
\end{align*}
Above, $\xi$ are the boundary conditions on $\partial S$ induced by the conditioning on $\Gamma$. In the first equality, we used the Domain Markov property. The first inequality is due to the following fact: since sites of $\Gamma$ are dual-open, the boundary conditions on $S$ are  dominated by those induced by free boundary conditions on the bottom of $R_x(4n,n)$ and wired on the three other sides of $R_x(4n,n)$. The last inequality is obvious. Note that the previous bound is uniform in the possible realizations of $\Gamma$. 
We find
\begin{align*}
\phi_{\mathbb S_n}^{\rm dobr}(A(x))&=\phi_{\mathbb S_n}^{\rm dobr}\Big(\phi_{\mathbb S_n}^{\rm dobr}(x\longleftrightarrow \partial_*R_x(4n,n)|\Gamma)1_{x\not\longleftrightarrow \partial_-R_x(4n,n)}\Big)\\
&\le \phi_{\mathbb S_n}^{\rm dobr}\Big(\phi^{\rm dobr}_{R(4n,n)}(\textstyle x\longleftrightarrow \partial_{*} R_x(4n,n)\displaystyle)1_{x\not\longleftrightarrow \partial_-R_x(4n,n)}\Big)\\
&\le \phi^{\rm dobr}_{R(4n,n)}\Big(\textstyle x\longleftrightarrow \partial_{*} R_x(4n,n)\displaystyle\Big)\le \phi^{\rm dobr}_{R(4n,n)}\Big(\textstyle (0,0)\longleftrightarrow \partial_{*} R(\frac{n}{16},\frac n4)\displaystyle\Big).\end{align*}
\end{proof}

\begin{lemma}\label{long crossing}
There exists $c_1>0$ such that 
$$\phi^{\rm dobr}_{R(n,n)}\Big(\textstyle R(n,\frac n4)\text{ contains an open path from left to right}\displaystyle\Big)~\ge~\frac{1}{n^{c_1}}.$$
\end{lemma}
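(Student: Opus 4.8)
The plan is to bootstrap from the single-scale estimate of Lemma~\ref{estimate rectangle} to a long horizontal crossing by chaining together many translated copies of the rectangle $R(4n,n)$, using the FKG inequality~\eqref{FKG} and the comparison between boundary conditions~\eqref{comparison} to glue the pieces. First I would fix a mesoscopic scale, say consider the rectangle $R(n,n)$ and its sub-rectangle $R(n,\tfrac n4)$, and tile a horizontal strip of height of order $n/4$ inside it by roughly $n$ overlapping translates of a small box of width of order $n/16$ and height of order $n/4$; Lemma~\ref{estimate rectangle}, after translating and using the comparison between boundary conditions (the Dobrushin conditions on $R(n,n)$ dominate the ones on each translated $R(4n,n)$ that fits inside), gives each such box a probability at least $1/(16n^3)$ of being connected to the top/left/right boundary of a slightly larger surrounding box.

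The key combinatorial step is then to produce, from these local connections, one single left-to-right crossing. Here I would argue as in standard crossing-gluing lemmas: combine a bounded (polynomial in $n$) number of the events ``$x_i$ is connected to $\partial_* R_{x_i}(\cdot)$'' for a well-chosen sequence $x_1,\dots,x_k$ of points marching from the left side of $R(n,\tfrac n4)$ to its right side, together with ``fill-in'' crossing events of fixed small rectangles (each of probability bounded below by a constant, by self-duality exactly as in Lemma~\ref{crossing}), so that their intersection forces a connected open path traversing $R(n,\tfrac n4)$ horizontally. By the FKG inequality the probability of this intersection is at least the product of the individual probabilities, which is at least $\bigl(1/(16n^3)\bigr)^{k}\cdot c^{k}$ for the number $k=O(n)$ of steps — wait, that would give a stretched-exponential bound, so the chaining must instead be organized so that only $O(1)$ or $O(\log n)$ many such weak events are used, e.g. by a dyadic/renormalization scheme that upgrades crossings of aspect-ratio-$\rho$ rectangles to crossings of aspect-ratio-$2\rho$ rectangles at a multiplicative cost that is polynomial in the scale; iterating $O(\log n)$ times multiplies $O(\log n)$ polynomial factors and yields a bound of the form $n^{-c_1}$.

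Concretely, I would set up an induction on the aspect ratio: let $h(\rho,n)$ be a lower bound for the probability (under suitable Dobrushin boundary conditions) that $R(\rho n, n)$ contains a left-to-right open path; Lemma~\ref{estimate rectangle} and Lemma~\ref{crossing} give $h(\rho_0,n)\ge 1/n^{C}$ for some fixed small $\rho_0$ and constant $C$, and an FKG-plus-comparison gluing argument gives $h(2\rho,n)\ge \tfrac12 h(\rho,n)^2$ or a similar recursion with only a constant loss per doubling. Since the target aspect ratio here is bounded (the rectangle $R(n,\tfrac n4)$ has aspect ratio $8$), only boundedly many doublings are needed, and the final bound is $h(8,n)\ge c\,n^{-c_1}$ for an explicit $c_1$ depending on $q$ only through the constants in Lemma~\ref{estimate rectangle}.

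The main obstacle will be the bookkeeping of boundary conditions during the gluing: each translated copy of $R(4n,n)$ or $R(\rho n, n)$ must sit inside $R(n,n)$ in such a way that the Dobrushin boundary conditions on the big rectangle dominate the Dobrushin boundary conditions on the small one on the relevant sides, so that~\eqref{comparison} can be applied in the right direction, while simultaneously the connection events being intersected remain increasing so that FKG applies. One must also be careful that the ``crossing'' pieces supplied by self-duality (Lemma~\ref{crossing}) are placed on sub-rectangles whose boundary conditions are at least as favourable as the wired-on-two-sides configuration used there; this forces the overlapping tiling to be arranged with the wired sides always facing outward. None of this is deep, but it is the step where the proof can go wrong, and it is presumably why the author relegated the argument to the appendix and only recorded the clean polynomial conclusion in the lemma statement.
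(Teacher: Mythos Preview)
Your sketch has the right ingredients (Lemma~\ref{estimate rectangle} as seed, Lemma~\ref{crossing} for gluing, FKG and \eqref{comparison} throughout, only a bounded number of pieces because the target aspect ratio is fixed), and in that sense it is close to the paper. But two steps are genuinely missing, and the way you organize the chaining makes them harder than necessary.

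First, Lemma~\ref{estimate rectangle} gives a \emph{point-to-boundary} estimate, not a crossing estimate. You assert that it combines with Lemma~\ref{crossing} to give $h(\rho_0,n)\ge n^{-C}$ for some small $\rho_0$, but you never say how. This is exactly the heart of the paper's argument: it splits according to whether the path from $(0,0)$ to $\partial_* R(\tfrac n{16},\tfrac n4)$ lands on a vertical side or on the top. In the first case a symmetric copy and FKG already give a point-to-point connection $(0,0)\leftrightarrow(\tfrac n8,0)$; in the second case one conditions on the leftmost and rightmost vertical crossings and applies Lemma~\ref{crossing} in the region between them, which now carries wired boundary conditions on both sides. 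Without this case split your seed estimate is not established.

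Second, your recursion $h(2\rho,n)\ge\tfrac12 h(\rho,n)^2$ is not automatic: two horizontal crossings of overlapping rectangles need not intersect, so FKG alone does not concatenate them. One again needs the conditioning-on-extremal-paths trick plus Lemma~\ref{crossing}, and the student paragraph that acknowledges this difficulty does not actually carry it out. The paper sidesteps the issue entirely by chaining \emph{point-to-point} connections along the bottom boundary rather than crossings: once $(0,0)\leftrightarrow(\tfrac n8,0)$ in $R(2n,\tfrac n4)$ holds with probability $\ge n^{-c}$, translating and applying FKG sixteen times gives the full left-to-right crossing of $R(n,\tfrac n4)$ with $c_1=16c$. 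Because consecutive building blocks share an endpoint, no separate gluing lemma is needed at that stage.

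A smaller slip: you write that translated copies of $R(4n,n)$ ``fit inside'' $R(n,n)$. They do not --- $R(4n,n)$ is the wider domain. The comparison \eqref{comparison} is used in the other direction: an increasing event in $R(2n,n)$ has at least the probability it had in $R(4n,n)$ under Dobrushin conditions, because the induced boundary conditions on the smaller box are dominated by its own Dobrushin conditions.
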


\begin{proof}
Notice that if one shows that there exists $c>0$ such that
\begin{equation}\phi_{R(2n,n)}^{\rm dobr}\Big(\textstyle(0,0)\leftrightarrow (\frac{n}{8},0)\text{ in }R(2n,\frac n4)\Big)\ge \frac{1}{n^{c}},\label{eqr}
\end{equation}
then the comparison between boundary conditions implies that
$$\phi_{R(n,n)}^{\rm dobr}\Big(\textstyle x\leftrightarrow x+(\frac{n}{8},0)\text{ in }R(n,\frac n4)\Big)\ge \frac{1}{n^{c}}$$
for any $x$ on the bottom of $R(n,n)$ (note that the left, right and top sides are wired and count as connections). 

The FKG inequality \eqref{FKG} then implies that $x$ and $x+k\frac n8$ are connected with probability larger than $n^{-kc}$. Using this estimate for $x=(-n,0)$ and $k=16$ yields the result with $c_1=16c$.
In conclusion, we only need to show \eqref{eqr}.
Applying Lemma~\ref{estimate rectangle}, we face two cases, see Fig.~\ref{fig:case3}. 
\bigbreak
\noindent\textbf{Case 1:} $\displaystyle\phi^{\rm dobr}_{R(4n,n)}\Big(\textstyle(0,0)\longleftrightarrow \{\frac n{16}\}\times[0,\frac n4]\text{ in }R(\frac n{16},\frac n4)\displaystyle\Big)~\ge~\frac{1}{64n^3}.$
\bigbreak
\noindent In such case, there exists $x\in\{\frac n{16}\}\times[0,\frac n4]$ such that
$$\displaystyle\phi^{\rm dobr}_{R(4n,n)}(\textstyle(0,0)\longleftrightarrow x \text{ in }R(\frac n{16},\frac n4))~\ge~\frac{1}{16n^{4}}.$$
By symmetry and comparison between boundary conditions, we obtain that
\begin{align*}
&\phi^{\rm dobr}_{R(2n,n)}\Big(\textstyle(0,0)\longleftrightarrow x \text{ in }R(\frac n{16},\frac n4)\displaystyle\Big)\text{ and }\phi^{\rm dobr}_{R(2n,n)}\Big(\textstyle(\frac n{8},0)\longleftrightarrow x \text{ in }R_{(\frac n8,0)}(\frac n{16},\frac n4)\displaystyle\Big)
\end{align*}
are larger than $\frac{1}{16n^{4}}$. The FKG inequality implies \eqref{eqr} in this case.
\bigbreak
\noindent\textbf{Case 2:} $\displaystyle\phi^{\rm dobr}_{R(4n,n)}\Big(\textstyle (0,0)\longleftrightarrow [-\frac{n}{16},\frac{n}{16}]\times\{\frac n4\}\text{ in }R(\frac n{16},\frac n4)\displaystyle\Big)~\ge~\frac{1}{32n^3}.$
\bigbreak
Consider the event that there exists an open path in $R(\frac n{16},\frac n4)$ from 0 to the top, and an open path in $R_{(\frac n8,0)}(\frac n{16},\frac n4)$ from $(\frac n8,0)$ to the top. The FKG inequality implies that this event has probability larger than $1/(32n^3)^2$. 

We now aim to show that both vertical crossings can be connected by an open path with probability $1/2$. We use a technique close to the one used in the previous proof. 

Conditioning on the existence of the two previous open paths boils down to conditioning on the left most open path from $(0,0)$ to the top in $R(\frac n{16},\frac n4)$, and the right most open path from $(\frac n8,0)$ to the top. The part of $R(2n,\frac n4)$ in between these two paths is denoted $S$. Note that $S$ is included in $B=[-\frac n{16},\frac{3n}{16}]\times[0,\frac n4]$. As in the previous lemma, the Markov domain property and the comparison between boundary conditions imply that the boundary conditions on $\partial S$  dominate wired boundary conditions on the left and right of $B$, and free boundary conditions on the top and bottom. Using Lemma~\ref{crossing} and the same strategy as for $A(x)$, the probability that there exists an open path in $S$ from left to right (i.e. from the left most open path from $(0,0)$ to the top in $R(\frac n{16},\frac n4)$, to the right most open path from $(\frac n8,0)$ to the top in $R_{(\frac n8,0)}(\frac n{16},\frac n4)$) is larger than the probability that there exists an open path  crossing the square from left to right, i.e. $\frac12$. In particular, we created an open connection between $(0,0)$ and $(\frac n8,0)$. Overall, the probability that there exists an open path from $(0,0)$ to $(\frac n8,0)$ in $R(2n,\frac n4)$ is larger than $\frac12\frac{1}{(32 n^3)^2}\ge \frac{1}{n^c}$ for some $c$ large enough.\end{proof}

\begin{figure}[t]
\begin{center}
\includegraphics[width=10cm]{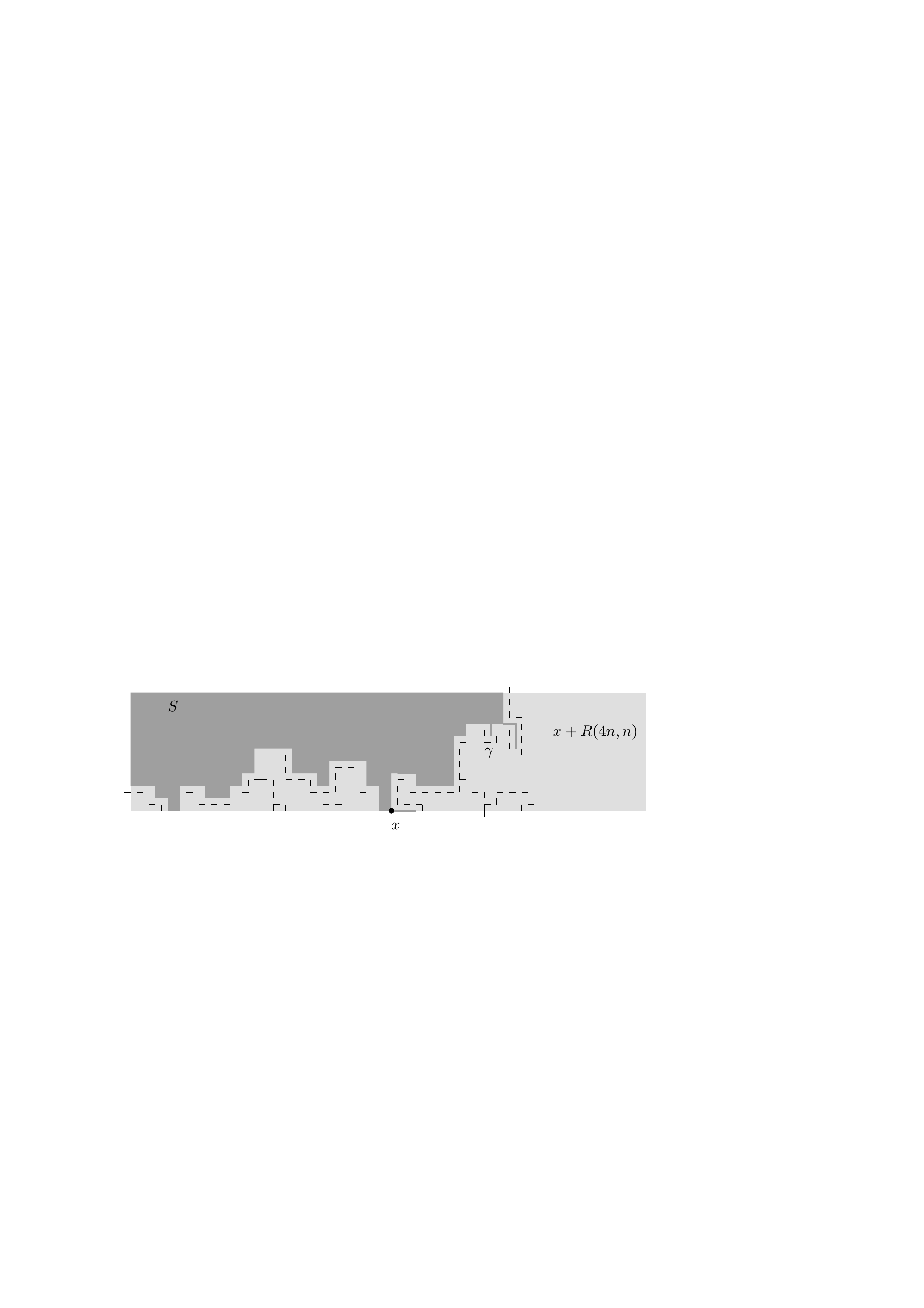}\quad\quad \includegraphics[width=4.8cm]{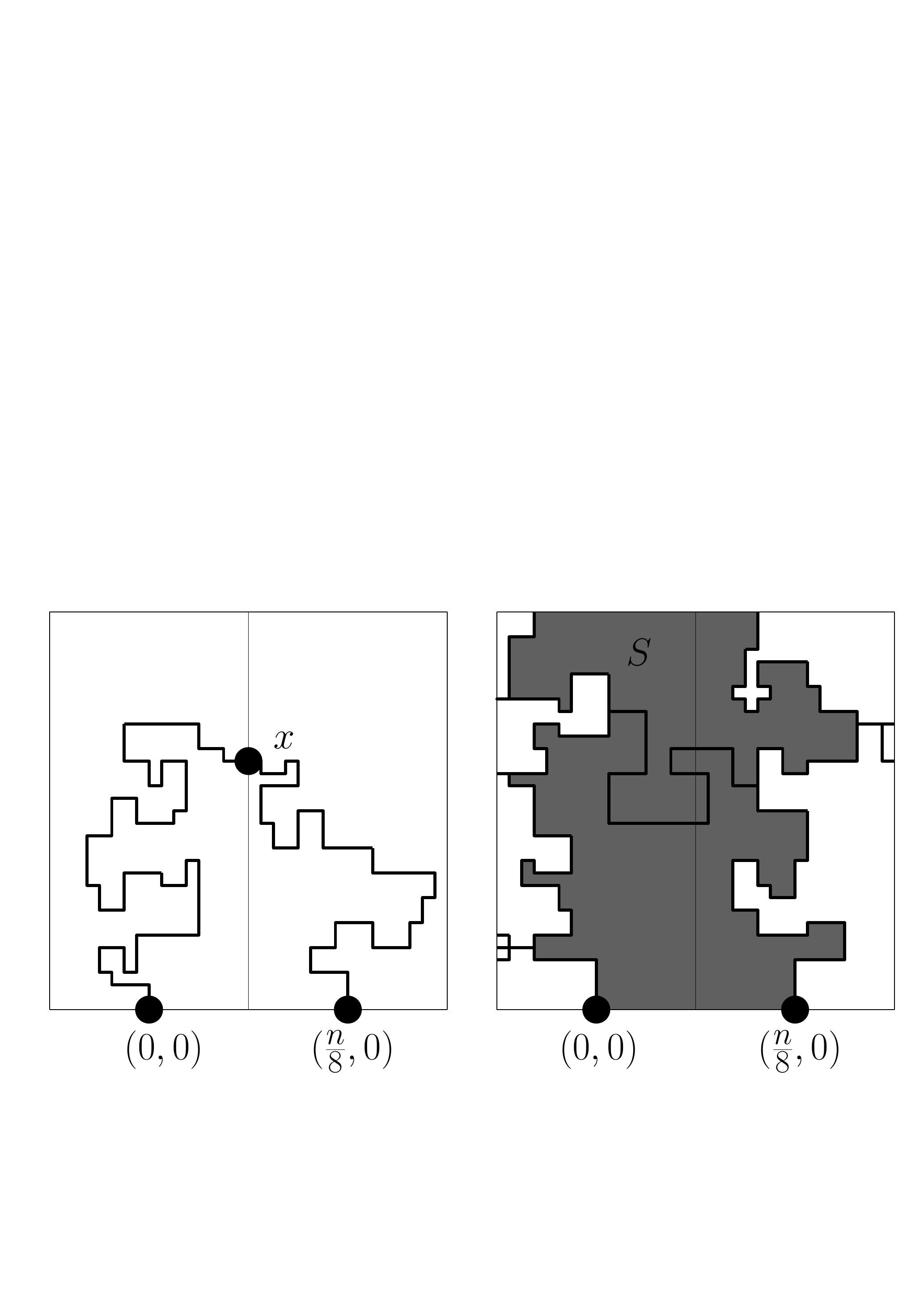}
\caption{\label{fig:case0}{\bf Left.} The path $\Gamma$ in $R_x(4n,n)$ and the area $S$ above it. \label{fig:case3}{\bf Right.} The two cases leading to a path from $(0,0)$ to $(\frac n8,0)$.}
\end{center}
\end{figure}

We now use \eqref{eq:bo} to deduce an estimate on crossing probabilities in a slit domain; see Fig.~\ref{fig:case1}. This is the only place where we use \eqref{eq:bo}. In particular, previous lemmas are true for $q>4$, where a first order phase transition is expected. The following lemma would not be valid for $q>4$.

Let $\partial_n=\{0\}\times[0,n]$ and let $C_n$ be the slit domain obtained by removing from $[-n,n]^2$ the  edges of $\partial_n$. Let $\phi^{\rm dobr}_{C_n}$ be the measure on $C_n$ with wired boundary conditions on $\partial_n$ and free elsewhere.
\begin{lemma}\label{lem:crucial}
There exists $c_2>0$ such that for any $n\ge 1$, 
$$\phi^{\rm dobr}_{C_n}\Big(\textstyle(0,-n)\longleftrightarrow \partial_*((0,-n)+R(\frac n{16},\frac n4))\Big)~\ge~ \displaystyle\frac{1}{n^{c_2}}.$$
\end{lemma}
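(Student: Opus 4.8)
The plan is to feed the boundary identity \eqref{eq:bo} on the universal-cover domain $U_n$ into the elementary crossing estimates, and then spend the real effort transferring the resulting bound back to the plane. First, since $|\delta_x|\le C$ in \eqref{eq:bo}, the triangle inequality gives $\sum_{x\in\partial U_n}\phi^0_{U_n,p_c,q}(0\longleftrightarrow x)\ge 1/C$. Writing $\partial U_n=\bigsqcup_{k\in\Z}\partial^{(k)}$ according to the sheet index $x_3=k$ — each $\partial^{(k)}$ being a subset of $\partial[-n,n]^2$, hence with $O(n)$ vertices — this reads $\sum_k S_k\ge 1/C$ where $S_k:=\sum_{x\in\partial^{(k)}}\phi^0_{U_n,p_c,q}(0\longleftrightarrow x)$.

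Next I would localize the sheet index. For the cluster of $0$ to reach sheet $k$ it must wind $|k|$ times around the puncture, so in particular no dual circuit separates the base sheet from sheet $k$. Using self-duality at $p_c$ together with Lemmas~\ref{crossing}--\ref{long crossing} and the comparison between boundary conditions \eqref{comparison} (to absorb, via the domain Markov property, the conditioning on the configuration in the other sheets), one obtains a polynomial lower bound $n^{-A}$, uniform in $n$, on the probability of such a blocking dual circuit; hence $\phi^0_{U_n,p_c,q}(0\longleftrightarrow\text{sheet }k)\le(1-n^{-A})^{|k|}$. Therefore the sheets with $|k|>n^{A+1}$ contribute less than $1/(2C)$ to $\sum_k S_k$, and a pigeonhole over the $O(n^{A+2})$ boundary vertices of the remaining sheets yields some $x^{\ast}\in\partial U_n$ with $|x_3^{\ast}|\le n^{A+1}$ and $\phi^0_{U_n,p_c,q}(0\longleftrightarrow x^{\ast})\ge c_0\,n^{-A-3}$.

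Then I would return to the plane. Restricting to the finitely many sheets $|x_3|\le n^{A+1}$ gives a finite planar graph still carrying the connection with probability $\ge\tfrac12 c_0 n^{-A-3}$; on this graph one transfers the estimate to the slit square $C_n$, the point being that each excursion of the connecting path away from the base sheet crosses the branch cut, which on $C_n$ is compensated by the wired slit. Via the domain Markov property and \eqref{comparison} this should give $\phi^{\rm dobr}_{C_n}(0\longleftrightarrow\partial[-n,n]^2)\ge n^{-c}$ for some $c=c(q)$. I expect this step — identifying a multi-sheeted connection with a genuine planar crossing while keeping everything uniform in $n$ — to be the main obstacle: it is precisely the ``playing with boundary conditions'' and the need to relate FK percolation on $\mathbb{U}$ to FK percolation on $\Z^2$ that the text warns about.

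Finally, I would anchor the connection at $(0,-n)$. Using the FKG inequality \eqref{FKG}, the comparison \eqref{comparison} (which lets the wired-slit boundary conditions be kept throughout), and Lemmas~\ref{crossing} and \ref{long crossing}, one propagates ``$0\longleftrightarrow\partial[-n,n]^2$'' first along the bottom side of the square down to a neighbourhood of $(0,-n)$ and then into the box $(0,-n)+R(\tfrac n{16},\tfrac n4)$, losing only polynomial factors in $n$. This yields $\phi^{\rm dobr}_{C_n}\big((0,-n)\longleftrightarrow\partial_*((0,-n)+R(\tfrac n{16},\tfrac n4))\big)\ge n^{-c_2}$ for a suitable $c_2=c_2(q)>0$, as claimed.
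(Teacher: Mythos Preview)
Your outline follows the paper's four-step scheme (use \eqref{eq:bo}, localize the sheet index, pass to $C_n$, anchor at $(0,-n)$), and steps~1--2 are essentially right in spirit. But steps~3 and~4 both contain genuine gaps, and in fact it is the weakness of the conclusion you draw in step~3 that makes your step~4 fail.

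\textbf{Step~3.} Restricting $U_n$ to finitely many sheets does \emph{not} give a planar graph (a finite spiral staircase is not planar), so that remark is incorrect. More importantly, your output of this step is only $\phi^{\rm dobr}_{C_n}(0\leftrightarrow\partial[-n,n]^2)\ge n^{-c}$: the wired slit reaches the outer boundary \emph{somewhere}. The paper extracts more. Having found $x^\ast\in\partial U_n$ with $\phi^0_{U_n}(0\leftrightarrow x^\ast)\ge n^{-c}$, it uses the symmetries of $(U_n,\phi^0_{U_n})$ under sheet translation and quarter-turn rotation to assume $x^\ast=(x_1,-n,0)$ lies on the bottom side of the base sheet. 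Then the clean observation is that any path in $U_n$ from $0$ to $x^\ast$ has a last portion lying entirely in sheet~0, starting either at $0$ or at a vertex of the branch cut; hence $\{0\leftrightarrow x^\ast\text{ in }U_n\}$ is contained in the event $\{x^\ast\leftrightarrow\text{slit, using only sheet-0 edges}\}$, which is increasing and depends on a single sheet. Domain Markov and \eqref{comparison} now give directly $\phi^{\rm dobr}_{C_n}\big((x_1,-n)\leftrightarrow\partial_n\big)\ge n^{-c}$ for a \emph{specific} point on the bottom side. This localisation is what you are missing.

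\textbf{Step~4.} ``Propagating along the bottom side using Lemmas~\ref{crossing}--\ref{long crossing}'' does not work: both lemmas require wired boundary conditions on (at least) the top, left and right of the relevant rectangle, whereas in $C_n$ the entire outer boundary is free. Conditioning on a single crossing from $\partial_n$ to the boundary wires one arc, not three sides of a box around $(0,-n)$. The paper's remedy is to exploit the reflection symmetry of $C_n$ across the vertical axis: from $\phi^{\rm dobr}_{C_n}\big((x_1,-n)\leftrightarrow\partial_n\big)\ge n^{-c}$ one also has the same bound for $(-x_1,-n)$; FKG then makes both crossings occur with probability $\ge n^{-2c}$. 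Conditioning on the right-most crossing from $(x_1,-n)$ and the left-most from $(-x_1,-n)$ and using domain Markov, the region between them containing $(0,-n)$ acquires boundary conditions that dominate free on the bottom of $[-n,n]^2$ and wired elsewhere. At this point Lemma~\ref{estimate rectangle} (not Lemma~\ref{long crossing}) applies verbatim and yields the claimed lower bound on $\phi^{\rm dobr}_{C_n}\big((0,-n)\leftrightarrow\partial_*R_{(0,-n)}(\tfrac n{16},\tfrac n4)\big)$. This symmetry-and-sandwich manoeuvre is the missing idea.
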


\begin{proof}
In this proof, we are working on $\mathbb U$. For this reason, we use coordinates on $\mathbb Z^3$. 

The $\phi^0_{U_n}$-probability that the dual vertex $(-\frac12,-\frac12,k)$ is connected by a dual open path inside $[-n,0]\times[-n,n]\times\{k\}$ to  $\partial U_n$, conditionally to any configuration outside this rectangle, is larger than $\frac1{32n^3}$. Indeed, boundary conditions are free on every edge of the rectangle, except on the bottom one, for which it is  dominated by wired ones. Recall that free boundary conditions correspond to wired boundary conditions in the dual, and vice versa. Therefore, for the dual FK percolation on the dual graph of $[-n,0]\times[-n,n]\times\{k\}$, the boundary conditions above dominate wired boundary conditions on three sides, and free on the side containing $(-\frac12,-\frac12,k)$. Lemma~\ref{estimate rectangle} applied to the dual model implies the lower bound.

For a vertex $x=(x_1,x_2,x_3)\in \partial U_n$ (assume $x_3\ge 0$) to be connected to $(0,0,0)$, none of the dual vertices $(-\frac12,-\frac12,k)$ must be dual connected to $\partial U_n$ in $R_n\times\{k\}$, for $k\in [0,x_3]$. In particular, the previous lower bound implies
\begin{align*}\phi^0_{U_{n}}((0,0,0)\longleftrightarrow x)~&\le~ \Big(1-\frac1{32n^3}\Big)^{|x_3|}.\end{align*}
For $|x_3|\ge n^4$, $\phi^0_{U_n}((0,0,0)\longleftrightarrow x)$ becomes negligible. The previous equation together with \eqref{eq:bo} thus implies that there exist $c>0$ and $x\in \partial U_n$ with 
$$\phi^0_{U_{n}}((0,0,0)\longleftrightarrow x)\ge \frac{1}{n^{c}}.$$
Let us rotate and translate vertically $U_n$ in such a way that $x=(x_1,-n,0)$ for some $x_1\in[-n,n]$. Let us assume without loss of generality that $x_1\ge 0$. The boundary conditions for the primal model on $C_n$ induced by free boundary conditions on $U_n$ are  dominated by wired boundary conditions on $\partial_n$, and free elsewhere. We deduce that
\begin{equation}\label{eq:eq}\phi^{\rm dobr}_{C_n}(\textstyle x\longleftrightarrow \partial_n)~\ge~ \displaystyle\frac{1}{n^{c}}.\end{equation}
Now, we aim to bound from below the probability that $(0,-n,0)$ is connected to $\partial_n$ in $C_n$. Since we work on a planar domain, we drop the third coordinate from the notation. Assume that $x=(x_1,-n)$ and $(-x_1,-n)$ are connected to $\partial_n$. Consider the right-most open crossing from $(x_1,-n)$ to $\partial_n$, and the left-most open crossing from $(-x_1,-n)$ to $\partial_n$. Let $S$ be the component of $C_n$ between these two paths which contains $(0,-n)$, see Fig.~\ref{fig:case1}. The same strategy as for $A(x)$ implies that the boundary conditions in $S$  dominate free boundary conditions on the bottom of $C_n$, and wired elsewhere. Lemma~\ref{estimate rectangle} thus implies that
\begin{align*}&\phi^{\rm dobr}_{C_n}\Big(\textstyle(0,-n)\longleftrightarrow \partial_*R_{(0,-n)}(\frac n{16},\frac n4)\Big)\\
&\ge \frac{1}{n^{2c}}\phi^{\rm dobr}_{C_n}\Big(\textstyle(0,-n)\longleftrightarrow \partial_*R_{(0,-n)}(\frac n{16},\frac n4))~\Big|~(x_1,-n)\text{ and }(-x_1,-n)\longleftrightarrow \partial_n\Big)\\
&\ge \frac{1}{n^{2c}}\phi^{\rm dobr}_{R(n,2n)}\Big(\textstyle(0,0)\longleftrightarrow \partial_*R(\frac n{16},\frac n4)\Big)\ge \frac{1}{32\cdot(2n)^3n^{2c}}.\end{align*}
We used \eqref{eq:eq} in the first inequality, and in the last, the fact that the boundary conditions on $R(\frac n{16},\frac n4)$ conditioned on the event that $(x_1,-n)$ and $(-x_1,-n)$ are connected to $\partial_n$  dominate Dobrushin boundary conditions on $R(n,2n)$. The claim follows by choosing $c_2>0$ large enough.
\end{proof}

\begin{figure}[t]
\begin{center}
\includegraphics[width=7cm]{figure7.eps}\quad\quad\quad \includegraphics[width=7cm]{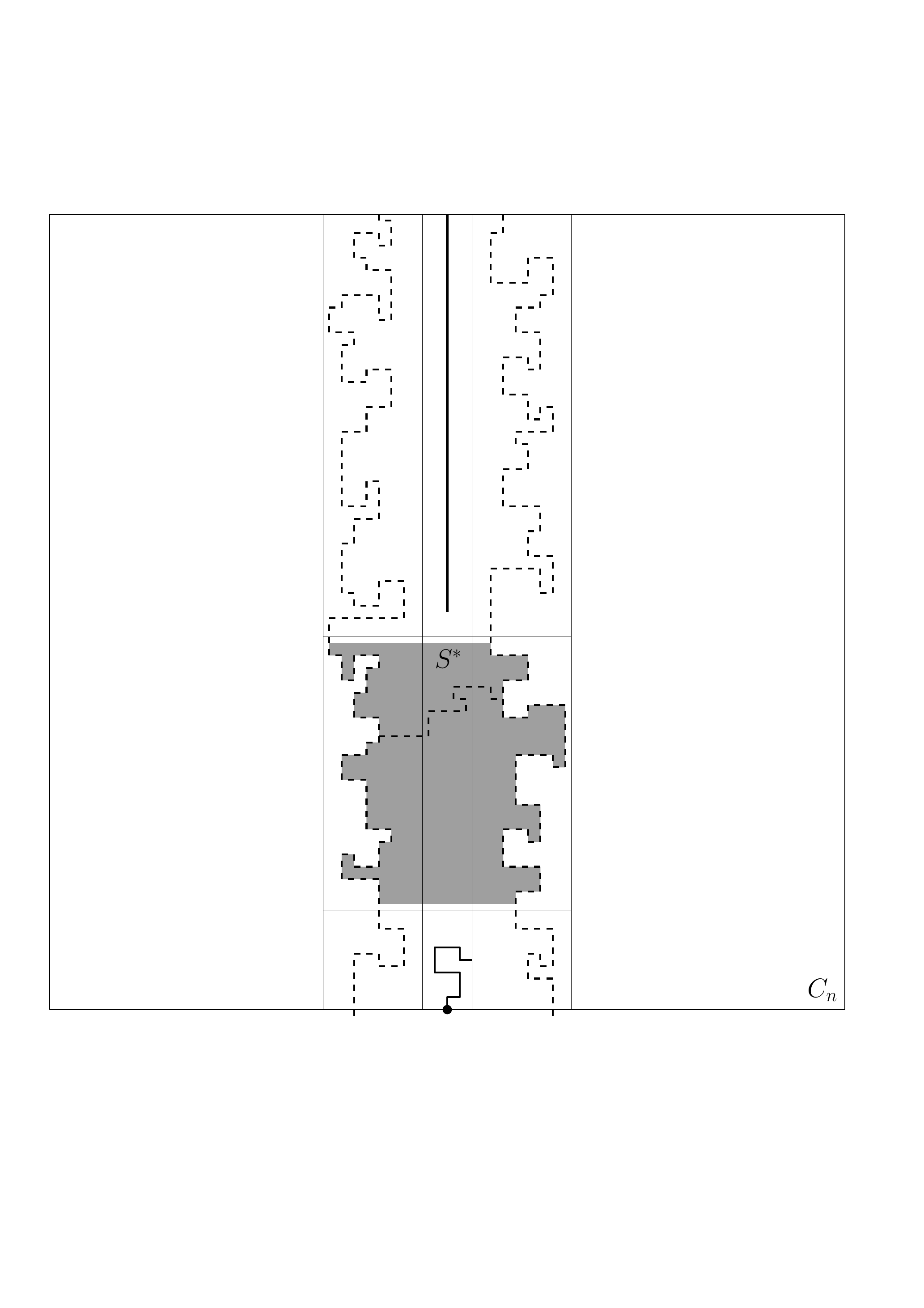}
\caption{\label{fig:case1}{\bf Left.} The two paths connecting $\partial_n$ to $(x_1,-n,0)$ and $(-x_1,-n,0)$ and the area $S$ between them. \label{fig:case2}{\bf Right.} The two dual-open paths in the long rectangles $[\frac n{16},\frac{5n}{16}]\times[-n,n]$ and $[-\frac{5n}{16},\frac n{16}]\times[-n,n]$.}
\end{center}
\end{figure}

\begin{proof}[Proposition~\ref{prop:poldecay}]
Define $\mathcal E=\{(0,-n)\longleftrightarrow \partial_*R_{(0,-n)}(\frac n{16},\frac n4)\}$ and $\mathcal F_{\rm right}$ and $\mathcal F_{\rm left}$ to be the events that rectangles $[\frac n{16},\frac{5n}{16}]\times[-n,n]$ and $[-\frac{5n}{16},-\frac{n}{16}]\times[-n,n]$ contain a dual open path from top to bottom. Let $\mathcal C$ be the event that there exists a dual open path in the square $[-\frac{5n}{16},\frac{5n}{16}]\times[-\frac{3n}{4},-\frac{n}{8}]$, connecting a dual open path crossing $[\frac n{16},\frac{5n}{16}]\times[-n,n]$ from top to bottom to a dual open path crossing $[-\frac{5n}{16},-\frac n{16}]\times[-n,n]$ from top to bottom.
First observe that
\begin{align*}\phi^0_{\mathbb Z^2}(\textstyle0\leftrightarrow \partial [-\frac{n}{16},\frac{n}{16}]^2)\ge \phi^{\rm dobr}_{C_n}(\mathcal E| \mathcal F_{\rm left}\cap \mathcal F_{\rm right}\cap \mathcal C)\ge \phi^{\rm dobr}_{C_n}(\mathcal E\cap \mathcal F_{\rm left}\cap \mathcal F_{\rm right}\cap \mathcal C)\end{align*}
Indeed, conditioned on $\mathcal F_{\rm left}\cap \mathcal F_{\rm right}\cap \mathcal C$, boundary conditions for the primal model on $R_{(0,-n)}(\frac n{16},\frac n4)$ are  dominated by free boundary conditions in the plane. It is therefore sufficient to prove a polynomial lower bound on the right-hand term. Trivially,
\begin{align*}\phi^{\rm dobr}_{C_n}(\mathcal E\cap \mathcal F_{\rm left}\cap \mathcal F_{\rm right}\cap \mathcal C)~=\phi^{\rm dobr}_{C_n}(\mathcal E)\cdot\phi^{\rm dobr}_{C_n}(\mathcal F_{\rm left}\cap \mathcal F_{\rm right}|\mathcal E)\cdot\phi^{\rm dobr}_{C_n}(\mathcal C|\mathcal E\cap\mathcal F_{\rm left}\cap \mathcal F_{\rm right})\end{align*}
Now, $\phi^{\rm dobr}_{C_n}(\mathcal E)\ge \frac1{n^{c_2}}$ by Lemma~\ref{lem:crucial}. Furthermore, conditioned on everything on the left of $\{\frac n{16}\}\times[-n,n]$, boundary conditions for the primal model on $[\frac n{16},n]\times[-n,n]$ are  dominated by wired boundary conditions on the left side and free elsewhere. In particular, boundary conditions for the dual model stochastically dominate free boundary conditions on the left side and wired elsewhere. It is thus possible to use Lemma~\ref{long crossing} in the dual model to bound from below the conditional probability of $\mathcal F_{\rm right}$ (existence of a vertical dual open crossing of $[\frac n{16},\frac{5n}{16}]\times[-n,n]$) by the quantity
$\frac{1}{n^{c_1}}$.

Similarly, conditioned on everything on the right of $\{-\frac n{16}\}\times[-n,n]$, boundary conditions for the primal model on $[-n,-\frac n{16}]\times[-n,n]$ are  dominated by wired boundary conditions on the right side and free elsewhere, and the same bound follows. 

Finally, we estimate $\phi^{\rm dobr}_{C_n}(\mathcal C|\mathcal E\cap\mathcal F_{\rm left}\cap \mathcal F_{\rm right})$. We focus on the configuration inside the square $[-\frac{5n}{16},\frac{5n}{16}]\times[-\frac{3n}{4},-\frac{n}{8}]$. As usual, condition on left and right most dual-open paths. Take $S^*$ to be the area of the dual graph in $[-\frac{5n}{16},\frac{5n}{16}]\times[-\frac{3n}{4},-\frac{n}{8}]$ between the right most dual open path from top to bottom in $[\frac n{16},\frac{5n}{16}]\times[-n,n]$, and the left most dual open path crossing from top to bottom in $[-\frac{5n}{16},\frac n{16}]\times[-n,n]$, see Fig.~\ref{fig:case2}. The boundary conditions for the dual model on $S^*$ dominate (dual) free boundary conditions on top and bottom, and (dual) wired elsewhere. The domain Markov property and the comparison between boundary conditions allow us to push (dual) wired boundary conditions to the left and right sides of $[-\frac{5n}{16},\frac{5n}{16}]\times[-\frac{3n}{4},-\frac{n}{8}]$, so that boundary conditions for the dual model on $S^*$ dominate (dual) free boundary conditions on top and bottom sides of $[-\frac{5n}{16},\frac{5n}{16}]\times[-\frac{3n}{4},-\frac{n}{8}]$, and (dual) wired on the two other sides. Therefore, the probability of having a dual open path in $S^*$ crossing from left to right is larger than $1/2$, thanks to Lemma~\ref{crossing}. In particular, 
$$\phi^{\rm dobr}_{C_n}(\mathcal C|\mathcal E\cap\mathcal F_{\rm left}\cap \mathcal F_{\rm right})\ge \frac12.$$
Putting everything together, we find that
$\phi^{\rm dobr}_{C_n}(\mathcal E\cap \mathcal F_{\rm left}\cap \mathcal F_{\rm right})\ge \frac{1}{2n^{c_2+2c_1}}$
and the claim follows.
\end{proof}

\bibliographystyle{plain}
\bibliography{bibli}

\begin{flushright}\footnotesize\obeylines  \textsc{Section de Math\'ematiques}
  \textsc{Universit\'e de Gen\`eve}
  \textsc{Gen\`eve, Switzerland}
  \textsc{E-mail:} \texttt{hugo.duminil@unige.ch}
\end{flushright}
\end{document}